\newsavebox{\circlebox}
\savebox{\circlebox}{\fontencoding{OMS}\selectfont\Large\char13}
\newlength{\circleboxwdht}
\newcommand{\centercircle}[1]{%
  \setlength{\circleboxwdht}{\wd\circlebox}%
  \addtolength{\circleboxwdht}{\dp\circlebox}%
  \raisebox{0.4\dp\circlebox}{%
    \parbox[][\circleboxwdht][c]{\wd\circlebox}{\centering#1}}%
  \llap{\usebox{\circlebox}}%
}
\address{Graduate School of Mathematics, Nagoya University, Chikusa-ku, 
Nagoya 464-8602, Japan}
\email{kazuto.iijima@math.nagoya-u.ac.jp}
\thanks{}
\dedicatory{}
\newtheorem{definition}{Definition}[section]
\newtheorem{theorem}[definition]{Theorem}
\newtheorem{proposition}[definition]{Proposition}
\newtheorem{example}[definition]{Example}
\newtheorem{corollary}[definition]{Corollary}
\newtheorem{lemma}[definition]{Lemma}
\newtheorem{claim}{Claim}
\begin{document}

\title[]{A comparison of $q$-decomposition numbers in the $q$-deformed Fock spaces of higher levels}
\author[K.~IIJIMA]{Kazuto Iijima}
\date{}
\maketitle

\thispagestyle{empty}

\begin{abstract}
The $q$-deformed Fock spaces of higher levels were introduced by Jimbo-Misra-Miwa-Okado. 
The $q$-decomposition matrix is a transition matrix from the standard basis to the canonical basis 
	defined by Uglov in the $q$-deformed Fock space. 
In this paper, we show that parts of $q$-decomposition matrices of level $\ell$ coincides with that of level $\ell - 1$ 
	under certain conditions of multi charge. 

\end{abstract}

%
%

\section{Introduction}

The $q$-deformed Fock spaces of higher levels were introduced by Jimbo-Misra-Miwa-Okado \cite{JMMO}. 
For a multi charge $\boldsymbol{s} = (s_{1} , \ldots , s_{\ell}) \in \mathbb{Z}^{\ell}$, 
the $q$-deformed Fock space $\boldsymbol{F}_{q}[\boldsymbol{s}]$ of level $\ell$ is 
the $\mathbb{Q}(q)$-vector space whose basis are indexed by $\ell$-tuples of Young diagrams. i.e. 
$\{ | \boldsymbol{\lambda} ; \boldsymbol{s} \rangle \, | \, \boldsymbol{\lambda} \in \Pi^{\ell} \}$, 
where $\Pi$ is the set of Young diagrams.
Heisenberg group (resp. quantum group $U_{q}(\hat{\mathfrak{sl_{n}}})$) acts on $\boldsymbol{F}_{q}[\boldsymbol{s}]$ 
	as level $q^{n \ell}$ (resp. level $q^{\ell}$). 
Both actions commute on $\boldsymbol{F}_{q}[\boldsymbol{s}]$. 

The canonical bases 
$\{ G^{+}(\boldsymbol{\lambda} ; \boldsymbol{s}) \, | \, \boldsymbol{\lambda} \in \Pi^{\ell} \}$ and 
$\{ G^{-}(\boldsymbol{\lambda} ; \boldsymbol{s}) \, | \, \boldsymbol{\lambda} \in \Pi^{\ell} \}$ are 
bases of the Fock space $\boldsymbol{F}_{q}[\boldsymbol{s}]$ 
that are invariant under a certain involution $\overline{\phantom{a}}$ \cite{U}.
Define matrices  
$\Delta^{+}(q) = (\Delta^{+}_{\boldsymbol{\lambda},\boldsymbol{\mu}}(q))
_{\boldsymbol{\lambda},\boldsymbol{\mu}}$ and 
$\Delta^{-}(q) = (\Delta^{-}_{\boldsymbol{\lambda},\boldsymbol{\mu}}(q))
_{\boldsymbol{\lambda},\boldsymbol{\mu}}$ by 

\begin{align*}
G^{+}(\boldsymbol{\lambda} ; \boldsymbol{s}) = \sum_{\boldsymbol{\mu}} 
\Delta^{+}_{\boldsymbol{\lambda},\boldsymbol{\mu}}(q) \, | \, \boldsymbol{\mu} ; \boldsymbol{s} \rangle 
\hspace{2em} , \hspace{3em} 
G^{-}(\boldsymbol{\lambda} ; \boldsymbol{s}) = \sum_{\boldsymbol{\mu}} 
\Delta^{-}_{\boldsymbol{\lambda},\boldsymbol{\mu}}(q) \, | \, \boldsymbol{\mu} ; \boldsymbol{s} \rangle .
\end{align*}
We call $\Delta^{+}_{\boldsymbol{\lambda},\boldsymbol{\mu}}(q)$ and $\Delta^{-}_{\boldsymbol{\lambda},\boldsymbol{\mu}}(q)$ 
{\it $q$-decomposition numbers}. 
These $q$-decomposition matrices play an important role in representation theory. 
However it is difficult to compute $q$-decomposition matrices. 

In the case of $\ell = 1$, 
Varagnolo-Vasserot \cite{VV1} proved that $\Delta^{+}(1)$ coincides with the decomposition matrix of $v$-Schur algebra. 
Ariki defined a $q$-analogue of decomposition numbers of $v$-Schur algebra by using Khovanov-Lauda's grading, 
	and proved that it coincides with the $q$-decomposition numbers \cite{A}. 
For $\ell \ge 2$, Yvonne \cite{Y} conjectured that the matrix $\Delta^{+}(q)$ coincides with 
the $q$-analogue of the decomposition matrices of cyclotomic Schur algebras at a primitive $n$-th root of unity 
under a suitable condition on multi charge. 

Let $\mathcal{O}_{\boldsymbol{s}}( \ell , 1, m)$ be the category $\mathcal{O}$ of rational Cherednik algebra of 
$( \mathbb{Z} / \ell \mathbb{Z} ) \wr \mathfrak{S}_{m}$ associated with multicharge $\boldsymbol{s}$. 
Rouquier \cite[Theorem 6.8, \S 6.5]{R2} conjectured that, for arbitrary multi charge, 
the multiplicities of simple modules in standard modules in $\mathcal{O}_{\boldsymbol{s}}( \ell , 1, m)$  
are equal to the corresponding coefficients $\Delta^{+}_{\boldsymbol{\lambda},\boldsymbol{\mu}}(q)$, 
where $m = | \boldsymbol{\lambda} | = | \boldsymbol{\mu} |$. 
It is expected that $\oplus_{m \ge 0} \mathcal{O}_{\boldsymbol{s}}( \ell , 1, m)$ 
	should categorify $\boldsymbol{F}_{1}[\boldsymbol{s}]$. 
(see \cite{S} for the details.) 
More generally, it is expected that, together with a suitable grading, $\oplus_{m \ge 0} \mathcal{O}_{\boldsymbol{s}}( \ell , 1, m)$ 
	should categorify $\boldsymbol{F}_{q}[\boldsymbol{s}]$. 
For the detail of correspondence between the charges of $\mathcal{O}_{\boldsymbol{s}}( \ell , 1, m)$ and 
the charges of Fock spaces, see \cite{R2}. 

Now, we state our main theorems. 
We say that 
the $j$-th component $s_{j}$ of the multi charge is {\it sufficiently large} for $| \boldsymbol{\lambda} ; \boldsymbol{s} \rangle$ 
if $s_{j} - s_{i} \ge \lambda_{1}^{(i)}$ for any $i=1,2,\cdots, \ell$, 
and that $s_{j}$ is {\it sufficiently small} for $| \boldsymbol{\lambda} ; \boldsymbol{s} \rangle$ if 
$s_{i} - s_{j} \geq | \boldsymbol{\lambda} | = |\lambda^{(1)}| + \cdots + |\lambda^{(\ell)}| $ for any $i=1,2,\cdots, \ell$
(see Definition \ref{largesmall}). 
More generally, for a positive integer $N$ we say that $s_{j}$ is sufficiently small for $N$ if 
$s_{i} - s_{j} \geq N$ for all $i \not= j $.
If $s_{j}$ is sufficiently large for $ | \boldsymbol{\lambda} ; \boldsymbol{s} \rangle $ and 
$ | \boldsymbol{\lambda} ; \boldsymbol{s} \rangle > | \boldsymbol{\mu} ; \boldsymbol{s} \rangle$, 
then the $j$-th components of $\boldsymbol{\lambda}$ and $\boldsymbol{\mu}$ are 
both the empty Young diagram $\emptyset$ (Lemma \ref{lem.3.2}). 
On the other hand, if $s_{j}$ is sufficiently small for $|\boldsymbol{\lambda} ; \boldsymbol{s} \rangle$ and 
$|\boldsymbol{\lambda} ; \boldsymbol{s} \rangle \ge |\boldsymbol{\mu} ; \boldsymbol{s} \rangle$, then  
$\mu^{(j)} = \emptyset$ implies $\lambda^{(j)} = \emptyset$.
(Lemma \ref{lem9}). 

Our main results are as follows: 

\smallskip

\underline{ \bf{Theorem A.} \rm{} (Theorem \ref{thmA}) }

Let $\varepsilon \in \{ +,- \}$. 
If $s_{j}$ is sufficiently large for $|\boldsymbol{\lambda} ; \boldsymbol{s} \rangle$, then 

\begin{equation*}
\Delta^{\varepsilon}_{ \boldsymbol{\lambda} , \boldsymbol{\mu} ; \boldsymbol{s} }(q)
=\Delta^{\varepsilon}_{ \check{\boldsymbol{\lambda}} , \check{\boldsymbol{\mu}} ; \check{\boldsymbol{s}} }(q),
\end{equation*}
where $\check{\boldsymbol{\lambda}}$ (resp. $\check{\boldsymbol{\mu}} , \check{\boldsymbol{s}})$ 
is obtained by omitting the $j$-th component of $\boldsymbol{\lambda}$ 
(resp. $\boldsymbol{\mu} , \boldsymbol{s})$, 
$\Delta^{\varepsilon}_{ \boldsymbol{\lambda} , \boldsymbol{\mu} ; \boldsymbol{s} }(q)$ 
is the $q$-decomposition number of level $\ell$ and 
$\Delta^{\varepsilon}_{ \check{\boldsymbol{\lambda}} , \check{\boldsymbol{\mu}} ; \check{\boldsymbol{s}} }(q)$ 
is the $q$-decomposition number of level $\ell -1$. 

\smallskip

\underline{ \bf{Theorem B.} \rm{} (Theorem \ref{thmB}) }

Let $\varepsilon \in \{ +,- \}$. 
If $s_{j}$ is sufficiently small for $|\boldsymbol{\mu} ; \boldsymbol{s} \rangle$ and $\mu^{(j)} = \emptyset$, then 

\begin{equation*}
\Delta^{\varepsilon}_{ \boldsymbol{\lambda} , \boldsymbol{\mu} ; \boldsymbol{s} }(q)
=\Delta^{\varepsilon}_{ \check{\boldsymbol{\lambda}} , \check{\boldsymbol{\mu}} ; \check{\boldsymbol{s}} }(q),
\end{equation*}
where $\check{\boldsymbol{\lambda}}$ (resp. $\check{\boldsymbol{\mu}} , \check{\boldsymbol{s}})$ 
is obtained by omitting the $j$-th component of $\boldsymbol{\lambda}$ 
(resp. $\boldsymbol{\mu} , \boldsymbol{s})$.

\vspace{1em}

Shoji and Wada proved some product formulae of $q$-decomposition numbers \cite[Theorem 2.9]{SW}. 
There are some overlaps between our results and their product formula.
 \cite{SW} has some assumptions ``dominance'' on the multi charge while our results don't. 
On the concluding facts, \cite{SW} has a flexibility of embedding of $q$-decomposition matrices while our results don't.

Our results are related to category $\mathcal{O}$ in the following sense. 
In the category $\mathcal{O}$, Chuang and Miyachi conjectured the following: 
\smallskip

\underline{ \bf{Conjectures.} \cite[\S 5]{CM}}

\begin{itemize}
\item[(A')] 
Let $\boldsymbol{\lambda}' \in \Pi^{\ell}$. 
If $s_{1}$ is sufficiently large for any $| ( \emptyset , \boldsymbol{\lambda}') ; \boldsymbol{s} \rangle$, 
there exists an embedding 
\begin{equation*}
	\mathcal{O}_{\check{\boldsymbol{s}}}( \ell , 1, m) \hookrightarrow \mathcal{O}_{\boldsymbol{s}}( \ell +1 , 1, m) .
\end{equation*}
\item[(B')] 
If $s_{\ell}$ is sufficiently small for $m$, 
there exists a quotient functor 
\begin{equation*}
	\mathcal{O}_{\boldsymbol{s}}( \ell +1 , 1, m) \twoheadrightarrow \mathcal{O}_{\check{\boldsymbol{s}}}( \ell , 1, m) , 
\end{equation*}
where $\check{\boldsymbol{s}}$ is obtained by omitting the $j$-th component of $\boldsymbol{s}$.  
\end{itemize}

\vspace{1em}

We see that Conjecture (A') (resp. (B')) is consistent with Theorem A (resp. Theorem B) by taking into account 
	the conjecture that $\oplus_{m \ge 0} \mathcal{O}_{\boldsymbol{s}}( \ell , 1, m)$ 
	should categorify $\boldsymbol{F}_{q}[\boldsymbol{s}]$. 
Theorem A (resp. Theorem B) gives a strong support to the conjecture (A') (resp. (B')).

This paper is organized as follows. 
In Section 2, we review the $q$-deformed Fock spaces of higher levels and its canonical bases. 
In Section 3, we state the main results. 
In Section 4, we review the straightening rules in the $q$-deformed Fock spaces.  
Theorem A(Theorem \ref{thmA}) and Theorem B(Theorem \ref{thmB}) are proved in Section 5 and 6 respectively. 

\subsection*{Acknowledgments}

I am deeply grateful to Hyohe Miyachi and Soichi Okada for their advice.

%
%

\subsection*{Notations}

For a positive integer $N$, a \it{partition} \rm{of} $N$ is 
a non-increasing sequence of non-negative integers summing to $N$. 
We write $|\lambda| = N$ if $\lambda$ is a partition of $N$.
The \it{length} $l(\lambda)$ \rm{of} $\lambda$ is the number of non-zero components of $\lambda$.
And we use the same notation $\lambda$ to represent the Young diagram corresponding to $\lambda$.
For an $\ell$-tuple $\boldsymbol{\lambda} = (\lambda^{(1)} , \lambda^{(2)} , \cdots , \lambda^{(\ell)})$ of Young diagrams, 
	we put $|\boldsymbol{\lambda}| = |\lambda^{(1)}| + |\lambda^{(2)}| + \cdots + |\lambda^{(\ell)}|$.

%
%

\section{The $q$-deformed Fock spaces of higher levels}

\subsection{$q$-wedge products and straightening rules}

Let $n$, $\ell$, $s$ be integers such that $n \ge 2$ and $\ell \ge 1$. 
We define $P(s)$ and $P^{++}(s)$ as follows;

\begin{align}
P(s) &= \{ \boldsymbol{k} = ( k_{1} , k_{2} , \cdots ) \in \mathbb{Z}^{\infty}  \,\, | \,\, 
	k_{r} = s-r+1 \,\, \text{ for any sufficiently large } r \,\, \}  \\
P^{++}(s) &= \{ \boldsymbol{k} = ( k_{1} , k_{2} , \cdots ) \in P(s)  \,\, | \,\, k_{1} > k_{2} > \cdots \,\, \}  .
\end{align}

Let $\Lambda^{s}$ be the $\mathbb{Q}(q)$ vector space spanned by the $q$-wedge products 
\begin{equation}
u_{\boldsymbol{k}} = u_{k_{1}} \wedge u_{k_{2}} \wedge \cdots \,\,\,\, , \,\,\,\, (\boldsymbol{k} \in P(s))
\end{equation}
subject to certain commutation relations, so-called straightening rules. 
Note that the straightening rules depend on $n$ and $\ell$. 
\cite[Proposition 3.16]{U} (The precise description will be given in \S 4.)

\begin{example}
$(i)$ For every $k_{1} \in \mathbb{Z}$, $u_{k_{1}} \wedge u_{k_{1}} = - u_{k_{1}} \wedge u_{k_{1}}$. 
Therefore $u_{k_{1}} \wedge u_{k_{1}} = 0$.

$(ii)$ Let $n=2$, $\ell =2$, $k_{1}=-2$, and $k_{2}=4$. 
Then 
$$ u_{-2} \wedge u_{4} = q \, u_{4} \wedge u_{-2} + (q^{2} - 1) \, u_{2} \wedge u_{0}. $$

$(iii)$ Let $n=2$, $\ell =2$, $k_{1}=-1$, $k_{2}=-2$ and $k_{3}=4$. 
Then 
\begin{align*}
u_{-1} \wedge u_{-2} \wedge u_{4} = u_{-1} \wedge ( u_{-2} \wedge u_{4} ) 
&= u_{-1} \wedge \Big( q \, u_{4} \wedge u_{-2} + (q^{2} - 1) \, u_{2} \wedge u_{0} \Big) \\
&= q \, u_{-1} \wedge u_{4} \wedge u_{-2} + (q^{2} - 1) \, u_{-1} \wedge u_{2} \wedge u_{0} 
\end{align*}
\label{ex3}
\end{example}

By applying the straightening rules, every $q$-wedge product $u_{\boldsymbol{k}}$ is 
expressed as a linear combination of so-called 
\it{ordered $q$-wedge products}, \rm{namely} $q$-wedge products $u_{\boldsymbol{k}}$ with $\boldsymbol{k} \in P^{++}(s)$.
The ordered $q$-wedge products $\{ u_{\boldsymbol{k}}  \,\, | \,\, \boldsymbol{k} \in P^{++}(s) \}$ 
form a basis of $\Lambda^{s}$ called \it{the standard basis.} \rm{}

\subsection{Abacus}

It is convenient to use the abacus notation for studying various properties in straightening rules. 

Fix an integer $N \ge 2$, and form an infinite abacus with $N$ runners labeled $1,2,\cdots N$ from left to right. 
The positions on the $i$-th runner are labeled by the integers having residue $i$ modulo $N$. 

\begin{equation*}
\begin{array}{ccccc}
\vdots & \vdots & \vdots & \vdots & \vdots \\
-N+1 & -N+2 & \cdots & -1 & 0 \\
1 & 2 & \cdots & N-1 & N \\
N+1 & N+2 & \cdots & 2N-1 & 2N \\
\vdots & \vdots & \vdots & \vdots & \vdots
\end{array}
\end{equation*}

Each $\boldsymbol{k} \in P^{++}(s)$ (or the corresponding $q$-wedge product $u_{\boldsymbol{k}}$) \
can be represented by a bead-configuration on the abacus with $n \ell$ runners 
and beads put on the positions $k_{1},k_{2},\cdots$.
We call this configuration {\it the abacus presentation} of $u_{\boldsymbol{k}}$. 

\begin{example}
If $n=2$, $\ell =3$, $s=0$, and $\boldsymbol{k} = (6,3,2,1,-2,-4,-5,-7,-8,-9,\cdots)$, 
then the abacus presentation of $u_{\boldsymbol{k}}$ is 

$$\begin{array}{cc|cc|cccc}
d=1 & & d=2 & & d=3 & & & \\ 
\vdots & \vdots & \vdots & \vdots & \vdots & \vdots & & \\
\centercircle{-17} & \centercircle{-16} & \centercircle{-15} & \centercircle{-14} & \centercircle{-13} & \centercircle{-12} 
		& \hspace{1em} & \cdots m=3  \\ 
\centercircle{-11} & \centercircle{-10} & \centercircle{-9} & \centercircle{-8} & \centercircle{-7} & -6 & \hspace{1em} & \cdots m=2  \\ 
\centercircle{-5} & \centercircle{-4} & -3 & \centercircle{-2} & -1 & 0 & \hspace{1em} & \cdots m=1  \\ 
\centercircle{1} & \centercircle{2} & \centercircle{3} & 4 & 5 & \centercircle{6} & \hspace{1em} & \cdots m=0  \\
\vdots & \vdots & \vdots & \vdots & \vdots & \vdots & & \\
c=1 & c= 2 & c=1 & c= 2 & c=1 & c= 2 & &
\end{array}$$
\end{example}

We use another labeling of runners and positions. 
Given an integer $k$, let $c,d$ and $m$ be the unique integers satisfying 
\begin{equation}
k=c+n(d-1)-n \ell m \quad , \quad 1 \le c \le n \quad \text{ and } \quad 1 \le d \le \ell. 
\label{k->cdm}
\end{equation}
Then, in the abacus presentation, the position $k$ is on the $c + n(d-1)$-th runner (see the previous example). 
Relabeling the position $k$ by $c - n m$, we have $\ell$ abaci with $n$ runners.

\begin{example}
In the previous example, relabeling the position $k$ by $c - n m$, we have 

$$\begin{array}{cc|cc|cccc}
d=1 & & d=2 & & d=3 & & & \\ 
\vdots & \vdots & \vdots & \vdots & \vdots & \vdots & & \\
\centercircle{-5} & \centercircle{-4} & \centercircle{-5} & \centercircle{-4} & \centercircle{-5} & \centercircle{-4} 
		& \hspace{1em} & \cdots m=3  \\ 
\centercircle{-3} & \centercircle{-2} & \centercircle{-3} & \centercircle{-2} & \centercircle{-3} & -2 & \hspace{1em} & \cdots m=2  \\ 
\centercircle{-1} & \centercircle{0} & -1 & \centercircle{0} & -1 & 0 & \hspace{1em} & \cdots m=1  \\ 
\centercircle{1} & \centercircle{2} & \centercircle{1} & 2 & 1 & \centercircle{2} & \hspace{1em} & \cdots m=0  \\
\vdots & \vdots & \vdots & \vdots & \vdots & \vdots & & \\
c=1 & c= 2 & c=1 & c= 2 & c=1 & c= 2 & &
\end{array}$$

\label{exam.2.5}
\end{example}

We assign to each of $\ell$ abacus presentations with $n$ runners a $q$-wedge product of level $1$. 
In fact, straightening rules in each ``sector'' are the same as those of level $1$ 
	by identifying the abacus in the sector with that of level $1$. 
(see also \cite{U} and \S 4.1 for the detail)

We introduce some notation.

\begin{definition}
For an integer $k$, let $c,d$ and $m$ be the unique integers satisfying (\ref{k->cdm}), and write 

\begin{equation}
u_{k} = u_{c - n m}^{(d)}.
\end{equation}

Also we write $u_{c_{1} - n m_{1}}^{(d_{1})} > u_{c_{2} - n m_{2}}^{(d_{2})}$ 
if $k_{1} > k_{2}$, where $k_{i} = c_{i} + n (d_{i} - 1) - n \ell m_{i}$, $(i=1,2)$. 
\label{notation}
\end{definition}

We regard $u_{c - n m}^{(d)}$ as $u_{c - n m}$ in the case of $\ell = 1$.

\begin{example}
If $n=2$, $\ell = 3$, then we have 
$$ u_{-10} \wedge u_{1} = -q^{-1} \, u_{1} \wedge u_{-10} +(q^{-2} -1) \, u_{-4} \wedge u_{-5}, $$
that is, 
$$ u_{-2}^{(1)} \wedge u_{1}^{(1)} = -q^{-1} \, u_{1}^{(1)} \wedge u_{-2}^{(1)} +(q^{-2} -1) \, u_{0}^{(1)} \wedge u_{-1}^{(1)}. $$

On the other hand, in the case of $n=2, \ell =1$, 
$$ u_{-2} \wedge u_{1} = -q^{-1} \, u_{1} \wedge u_{-2} +(q^{-2} -1) \, u_{0} \wedge u_{-1}.$$
\end{example}

\subsection{$\ell$-tuples of Young diagrams}

Another indexation of the ordered $q$-wedge products is given by the set of pairs 
$( \boldsymbol{\lambda} , \boldsymbol{s} )$ 
of $\ell$-tuples of Young diagrams $\boldsymbol{\lambda} = ( \lambda^{(1)} , \cdots ,\lambda^{(\ell)})$
and integer sequences $\boldsymbol{s} = (s_{1} , \cdots , s_{\ell})$ summing up to $s$.
Let $\boldsymbol{k} = (k_{1} , k_{2} , \cdots ) \in P^{++}(s)$, and write 
$$k_{r} = c_{r} + n(d_{r} - 1) - n \ell m_{r} \quad , 
	\quad 1 \le c_{r} \le n \quad , \quad 1 \le d_{r} \le \ell \quad , \quad m_{r} \in \mathbb{Z} \quad . $$
For $d \in \{ 1 ,2 , \cdots , \ell \}$, let $k_{1}^{(d)}, k_{2}^{(d)}, \cdots$ be integers such that 

$$\beta^{(d)} = \{ c_{r} - n m_{r} \,\, | \,\, d_{r} = d \} = \{ k_{1}^{(d)}, k_{2}^{(d)}, \cdots \} \quad \text{ and } \quad 
	k_{1}^{(d)} > k_{2}^{(d)} > \cdots $$ 
Then we associate to the sequence $(k_{1}^{(d)} , k_{2}^{(d)} , \cdots )$ an integer $s_{d}$ 
	and a partition $\lambda^{(d)}$ by 
$$ k_{r}^{(d)} = s_{d} -r+1 \quad \text{ for sufficiently large } r 
	\quad \text{ and } \quad  
	\lambda^{(d)}_{r} = k^{(d)}_{r} - s_{d} + r -1 \quad  \text { for } r \ge 1. $$
In this correspondence, we also write 

\begin{equation}
u_{\boldsymbol{k}} = | \boldsymbol{\lambda} ; \boldsymbol{s} \rangle \quad (\boldsymbol{k} \in P^{++}(s)).
\end{equation}

\begin{example}
If $n=2$, $\ell =3$, $s=0$, and $\boldsymbol{k} = (6,3,2,1,-2,-4,-5,-7,-8,-9,\cdots)$, then

\begin{align*}
k_{1} &= 6 = 2 + 2(3-1) -6 \cdot 0 \,\,\,\,,\,\,\,\,
k_{2} = 3 = 1 + 2(2-1) -6 \cdot 0 \,\,\,\,, \\
k_{3} &= 2 = 2 + 2(1-1) -6 \cdot 0 \,\,\,\,\,\, ,  \cdots \text{ and so on.} 
\end{align*}
Hence, 
\begin{align*}
\beta^{(1)} = \{ 2,1,0,-1,-2, \cdots \} \,\,\,\,\, , \,\,\,\,\,
\beta^{(2)} = \{ 1,0,-2,-3,-4, \cdots \}  \,\,\,\,\,\, , \,\,\,\,\,
\beta^{(3)} = \{ 2,-3,-4,-5, \cdots \}  \,\,\,\, .
\end{align*}
Thus, $\boldsymbol{s} = (2,0,-2)$ and $\boldsymbol{\lambda} = (\emptyset , (1,1) , (4))$.

Note that we can read off $\boldsymbol{s} = (2,0,-2)$ and $\boldsymbol{\lambda} = (\emptyset , (1,1) , (4))$ 
from the abacus presentation. 
$($see Example \ref{exam.2.5}$)$
\end{example}

\subsection{The $q$-deformed Fock spaces of higher levels}

\begin{definition}
For $\boldsymbol{s} \in \mathbb{Z}^{\ell}$, we define the $q$-deformed Fock space 
$\boldsymbol{F}_{q}[\boldsymbol{s}]$ of level $\ell$ to be the subspace of $\Lambda^{s}$ spanned by 
$| \boldsymbol{\lambda} ; \boldsymbol{s} \rangle$ $(\boldsymbol{\lambda} \in \Pi^{\ell}) \colon $  

\begin{equation}
\boldsymbol{F}_{q}[\boldsymbol{s}] = 
\bigoplus_{\boldsymbol{\lambda} \in \Pi^{\ell}} \mathbb{Q}(q) \, | \boldsymbol{\lambda} ; \boldsymbol{s} \rangle.
\end{equation}
We call $\boldsymbol{s}$ a {\it multi charge}. 
\end{definition}

\subsection{The bar involution}

\begin{definition}
The involution $\overline{\phantom{xy}}$ of $\Lambda^{s}$ is the $\mathbb{Q}$-vector space automorphism such that 
$\overline{q} = q^{-1}$ and 

\begin{align}
\overline{ u_{\boldsymbol{k}} } 
&= \overline{ u_{k_{1}} \wedge \cdots \wedge u_{k_{r}} } \wedge u_{k_{r+1}} \wedge \cdots 
=(-q)^{\kappa (d_{1}, \cdots ,d_{r})} q^{-\kappa (c_{1},\cdots,c_{r})}  
(u_{k_{r}} \wedge \cdots \wedge u_{k_{1}} )\wedge u_{k_{r+1}} \wedge \cdots ,
\label{barinvolution}
\end{align}
where $c_{i}$, $d_{i}$ are defined by $k_{i}$ as in (\ref{k->cdm}), $r$ is an integer satisfying $k_{r} = s-r+1$. 
And $\kappa (a_{1}, \cdots ,a_{r})$ is defined by  
$$ \kappa (a_{1}, \cdots ,a_{r}) = \# \{ (i,j) \, | \, i<j \,,\, a_{i}=a_{j} \} .$$
\label{barinv}
\end{definition}

\bf{Remarks} \rm{(i)} The involution is well defined. i.e. it doesn't depend on $r$ \cite{U}.

(ii) The involution comes from the bar involution of affine Hecke algebra $\hat{H_{r}}$. 
(see \S 4 for more detail.)

(iii) The involution preserves the $q$-deformed Fock space $\boldsymbol{F}_{q}[\boldsymbol{s}]$ of higher level.

\subsection{The dominance order}

We define a partial ordering 
$ | \boldsymbol{\lambda} ; \boldsymbol{s} \rangle \ge | \boldsymbol{\mu} ; \boldsymbol{s} \rangle $. 
For $ | \boldsymbol{\lambda} ; \boldsymbol{s} \rangle $ and 
$ | \boldsymbol{\mu} ; \boldsymbol{s} \rangle $, 
we define multi-sets $\widetilde{\boldsymbol{\lambda}}$ and $\widetilde{\boldsymbol{\mu}}$ as 

\begin{align*}
\widetilde{\boldsymbol{\lambda}} 
&= \{ \lambda_{a}^{(d)} + s_{d}  \, | \, 1 \le d \le \ell  \, , \, 1 \le a \le \max ( l(\lambda^{(d)}) , l(\mu^{(d)}) )  \, \}  \,, \\
\widetilde{\boldsymbol{\mu}} 
&= \{ \mu_{a}^{(d)} + s_{d}  \, | \, 1 \le d \le \ell  \, , \, 1 \le a \le \max ( l(\lambda^{(d)}) , l(\mu^{(d)}) )  \, \} \,  .
\end{align*}
We denote by $( \tilde{\lambda}_{1} , \tilde{\lambda}_{2} , \cdots )$ 
	$($resp. $(\tilde{\lambda}_{1} , \tilde{\lambda}_{2} , \cdots ))$ the sequence obtained by rearranging the elements 
	in the multi-set $\widetilde{\boldsymbol{\lambda}}$ (resp. $\widetilde{\boldsymbol{\mu}} )$ in decreasing order.  

\begin{definition}
Let $ | \boldsymbol{\lambda} ; \boldsymbol{s} \rangle  = u_{k_{1}} \wedge u_{k_{2}} \wedge \cdots \,\,$ and \,\,
$ | \boldsymbol{\mu} ; \boldsymbol{s} \rangle = u_{g_{1}} \wedge u_{g_{2}} \wedge \cdots  $. 
We define $ | \boldsymbol{\lambda} ; \boldsymbol{s} \rangle \ge | \boldsymbol{\mu} ; \boldsymbol{s} \rangle $ 
if $| \boldsymbol{\lambda} | = | \boldsymbol{\mu} |$ and 

\begin{equation}
\begin{cases}
\mathrm{(a)} \quad \quad 
\widetilde{ \boldsymbol{\lambda} } \not= \widetilde{ \boldsymbol{\mu} } \quad , \quad 
\sum_{j=1}^{r} \tilde{\lambda}_{j} \ge \sum_{j=1}^{r} \tilde{\mu}_{j} \,\,\,\,\,\, (\text{for all } \,\,\, r=1,2,3,\cdots ) 
	\quad , \text{ or } \quad  \\
\mathrm{(b)} \quad \quad 
\widetilde{ \boldsymbol{\lambda} } = \widetilde{ \boldsymbol{\mu} } \quad , \quad 
\sum_{j=1}^{r} k_{j} \ge \sum_{j=1}^{r} g_{j} \,\,\,\,\,\, (\text{for all } \,\,\, r=1,2,3,\cdots ) \quad .
\end{cases} 
\end{equation}
\label{order}
\end{definition}

\bf{Remark}. \rm{} The order in Definition \ref{order} is different from the order in \cite{U} (see Example \ref{example1} below). 
However, the unitriangularity in (\ref{unitriangularity}) holds for both of them. 

\begin{example}
Let $n= \ell =2$, $\boldsymbol{s} = (1,-1)$, $\boldsymbol{\lambda} = ((1,1) , \emptyset)$, and
$\boldsymbol{\mu} = (\emptyset , (2))$. 
Then, $| \boldsymbol{\lambda} ; \boldsymbol{s} \rangle = u_{2} \wedge u_{1} \wedge u_{-1} \wedge u_{-3} \wedge \cdots$ and 
$| \boldsymbol{\mu} ; \boldsymbol{s} \rangle = u_{3} \wedge u_{1} \wedge u_{-2} \wedge u_{-3} \wedge \cdots$.
In Uglov's order, $| \boldsymbol{\mu} ; \boldsymbol{s} \rangle$ is greater than 
	$| \boldsymbol{\lambda} ; \boldsymbol{s} \rangle$. 
However, 
$| \boldsymbol{\lambda} ; \boldsymbol{s} \rangle > | \boldsymbol{\mu} ; \boldsymbol{s} \rangle$ under our order 
since $ \{ \tilde{\lambda}_{1} , \tilde{\lambda}_{2} , \tilde{\lambda}_{3} \} = \{ 2,2,-1 \} $ and  
$\{ \tilde{\mu}_{1} , \tilde{\mu}_{2} , \tilde{\mu}_{3} \} = \{ 1,1,1 \}$.
\label{example1}
\end{example}

We define a matrix $(a_{\boldsymbol{\lambda},\boldsymbol{\mu}}(q))_{\boldsymbol{\lambda},\boldsymbol{\mu}}$ by 

\begin{equation}
\overline{ |\boldsymbol{\lambda} ; \boldsymbol{s} \rangle } = 
\sum_{\boldsymbol{\mu}} a_{\boldsymbol{\lambda},\boldsymbol{\mu}}(q) \,
|\boldsymbol{\mu} ; \boldsymbol{s} \rangle .
\end{equation}
Then the matrix $(a_{\boldsymbol{\lambda},\boldsymbol{\mu}}(q))_{\boldsymbol{\lambda},\boldsymbol{\mu}}$
is unitriangular with respect to $\ge$, that is 
\begin{equation}
\begin{cases}
\mathrm{(a)} & 
\text{ if } \,\, a_{\boldsymbol{\lambda},\boldsymbol{\mu}}(q) \not= 0 \,\, \text{, then } \,\,
|\boldsymbol{\lambda} ; \boldsymbol{s} \rangle \ge |\boldsymbol{\mu} ; \boldsymbol{s} \rangle , \\
\mathrm{(b)} & 
a_{\boldsymbol{\lambda},\boldsymbol{\lambda}}(q) = 1 .
\label{unitriangularity}
\end{cases}
\end{equation}
(see the identity (\ref{eq.22}) for the detail.)

Thus, by the standard argument, the unitriangularity implies the following theorem.

\begin{theorem}\cite{U}
There exist unique bases 
$\{ G^{+}(\boldsymbol{\lambda} ; \boldsymbol{s}) \, | \, \boldsymbol{\lambda} \in \Pi^{\ell}  \}$ and 
$\{ G^{-}(\boldsymbol{\lambda} ; \boldsymbol{s}) \, | \, \boldsymbol{\lambda} \in \Pi^{\ell}  \}$ 
of $\boldsymbol{F}_{q}[\boldsymbol{s}]$ such that 

\begin{align*}
\mathrm{(i) } \hspace{5em}
\overline{G^{+}(\boldsymbol{\lambda} ; \boldsymbol{s})} = G^{+}(\boldsymbol{\lambda} ; \boldsymbol{s}) 
\hspace{2em} , \hspace{3em} & 
\overline{G^{-}(\boldsymbol{\lambda} ; \boldsymbol{s})} = G^{-}(\boldsymbol{\lambda} ; \boldsymbol{s}) \\
\mathrm{(ii) } \hspace{1em}
G^{+}(\boldsymbol{\lambda} ; \boldsymbol{s}) \equiv | \, \boldsymbol{\lambda} ; \boldsymbol{s} \rangle
\,\,\,\, \mathrm{mod} \,\, q \, \mathcal{L}^{+} 
\hspace{2em} , \hspace{3em} & 
G^{-}(\boldsymbol{\lambda} ; \boldsymbol{s}) \equiv | \, \boldsymbol{\lambda} ; \boldsymbol{s} \rangle
\,\,\,\, \mathrm{mod} \,\, q^{-1} \, \mathcal{L}^{-}   \\
\text{where} \hspace{5em}
\mathcal{L}^{+} = \bigoplus_{\boldsymbol{\lambda} \in \Pi^{\ell}} 
\mathbb{Q}[q] \, | \boldsymbol{\lambda} ; \boldsymbol{s} \rangle
\hspace{2em} , \hspace{3em} & 
\mathcal{L}^{-} = \bigoplus_{\boldsymbol{\lambda} \in \Pi^{\ell}} 
\mathbb{Q}[q^{-1}] \, | \boldsymbol{\lambda} ; \boldsymbol{s} \rangle .
\end{align*}
\end{theorem}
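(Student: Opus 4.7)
The theorem is proved by the standard Kashiwara--Lusztig induction, taking as input the unitriangularity (\ref{unitriangularity}) and the fact that $\overline{\phantom{x}}$ is an involution. I describe the construction of $G^{+}(\boldsymbol{\lambda};\boldsymbol{s})$; that of $G^{-}$ follows from the identical argument with $q$ replaced by $q^{-1}$ and $\mathcal{L}^{+}$ by $\mathcal{L}^{-}$. The induction is on $\boldsymbol{\lambda}$ with respect to the partial order $\geq$ of Definition~\ref{order}, which is well-founded at each fixed weight $|\boldsymbol{\lambda}|$. For a $\boldsymbol{\lambda}$ that is minimal under $\geq$, the unitriangularity gives $\overline{|\boldsymbol{\lambda};\boldsymbol{s}\rangle} = |\boldsymbol{\lambda};\boldsymbol{s}\rangle$, so $G^{+}(\boldsymbol{\lambda};\boldsymbol{s}) := |\boldsymbol{\lambda};\boldsymbol{s}\rangle$ satisfies (i) and (ii) trivially.

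For the inductive step, assume $G^{+}(\boldsymbol{\mu};\boldsymbol{s})$ has been constructed with the two required properties for every $\boldsymbol{\mu} < \boldsymbol{\lambda}$. Since each such element has the shape $|\boldsymbol{\mu};\boldsymbol{s}\rangle + (\text{strictly lower terms})$, the family $\{|\boldsymbol{\lambda};\boldsymbol{s}\rangle\} \cup \{G^{+}(\boldsymbol{\mu};\boldsymbol{s}) : \boldsymbol{\mu} < \boldsymbol{\lambda}\}$ is unitriangular in the span of $\{|\boldsymbol{\nu};\boldsymbol{s}\rangle : \boldsymbol{\nu} \leq \boldsymbol{\lambda}\}$, so we may rewrite
\[
\overline{|\boldsymbol{\lambda};\boldsymbol{s}\rangle} \;=\; |\boldsymbol{\lambda};\boldsymbol{s}\rangle + \sum_{\boldsymbol{\mu} < \boldsymbol{\lambda}} \tilde{a}_{\boldsymbol{\lambda},\boldsymbol{\mu}}(q)\, G^{+}(\boldsymbol{\mu};\boldsymbol{s}), \qquad \tilde{a}_{\boldsymbol{\lambda},\boldsymbol{\mu}}(q) \in \mathbb{Q}[q,q^{-1}].
\]
Applying $\overline{\phantom{x}}$ to both sides and using $\overline{G^{+}(\boldsymbol{\mu};\boldsymbol{s})} = G^{+}(\boldsymbol{\mu};\boldsymbol{s})$ together with $\overline{\overline{v}} = v$, linear independence of the $G^{+}(\boldsymbol{\mu};\boldsymbol{s})$ forces $\overline{\tilde{a}_{\boldsymbol{\lambda},\boldsymbol{\mu}}(q)} = -\tilde{a}_{\boldsymbol{\lambda},\boldsymbol{\mu}}(q)$ for every $\boldsymbol{\mu}$. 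I then invoke the elementary Kashiwara--Lusztig lemma: any $f(q) \in \mathbb{Q}[q,q^{-1}]$ satisfying $\overline{f(q)} = -f(q)$ admits a unique decomposition $f(q) = c(q) - \overline{c(q)}$ with $c(q) \in q\mathbb{Q}[q]$. Applied to each $f = \tilde{a}_{\boldsymbol{\lambda},\boldsymbol{\mu}}(q)$, this produces unique $c_{\boldsymbol{\mu}}(q) \in q\mathbb{Q}[q]$, and setting
\[
G^{+}(\boldsymbol{\lambda};\boldsymbol{s}) \;:=\; |\boldsymbol{\lambda};\boldsymbol{s}\rangle + \sum_{\boldsymbol{\mu} < \boldsymbol{\lambda}} c_{\boldsymbol{\mu}}(q)\, G^{+}(\boldsymbol{\mu};\boldsymbol{s})
\]
yields a bar-invariant element; re-expanding in the standard basis and using the inductive congruence for each $G^{+}(\boldsymbol{\mu};\boldsymbol{s})$ gives property (ii).

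Uniqueness is a separate short step. If $G$ and $G'$ both satisfy (i) and (ii), write $D := G - G' = \sum_{\boldsymbol{\mu}} d_{\boldsymbol{\mu}}(q)\, |\boldsymbol{\mu};\boldsymbol{s}\rangle$ with every $d_{\boldsymbol{\mu}}(q) \in q\mathbb{Q}[q]$ and $\overline{D} = D$. If $D \neq 0$, let $\boldsymbol{\mu}_{0}$ be maximal under $\geq$ with $d_{\boldsymbol{\mu}_{0}}(q) \neq 0$. By (\ref{unitriangularity}) the coefficient of $|\boldsymbol{\mu}_{0};\boldsymbol{s}\rangle$ in $\overline{D}$ equals $\overline{d_{\boldsymbol{\mu}_{0}}(q)}$, so $\overline{D} = D$ forces $d_{\boldsymbol{\mu}_{0}}(q) = \overline{d_{\boldsymbol{\mu}_{0}}(q)}$; combined with $d_{\boldsymbol{\mu}_{0}}(q) \in q\mathbb{Q}[q]$ this yields $d_{\boldsymbol{\mu}_{0}}(q) \in q\mathbb{Q}[q] \cap q^{-1}\mathbb{Q}[q^{-1}] = \{0\}$, a contradiction.

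The main obstacle is not a single hard step but the careful bookkeeping of passing between expansions in $\{|\boldsymbol{\mu};\boldsymbol{s}\rangle\}$ and $\{G^{+}(\boldsymbol{\mu};\boldsymbol{s})\}$ without destroying the depth-filtration needed for both the induction and the final congruence (ii); essentially all the substantive content has already been packaged into the unitriangularity statement (\ref{unitriangularity}), so what remains is the skeleton of the classical canonical basis argument.
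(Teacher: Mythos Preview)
Your proof is correct and is precisely the ``standard argument'' the paper invokes without detail: the paper gives no proof of this theorem beyond the sentence ``Thus, by the standard argument, the unitriangularity implies the following theorem,'' together with the citation to \cite{U}, so your write-up is a faithful expansion of exactly what is meant. One small point worth making explicit is that the coefficients $\tilde{a}_{\boldsymbol{\lambda},\boldsymbol{\mu}}(q)$ lie in $\mathbb{Q}[q,q^{-1}]$ (not merely in $\mathbb{Q}(q)$), which is needed for the Kashiwara--Lusztig splitting lemma; this follows from the explicit straightening rules (Proposition~\ref{s.rule}), but you use it without comment.
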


\begin{definition}
Define matrices 
$\Delta^{+}(q) = (\Delta^{+}_{\boldsymbol{\lambda},\boldsymbol{\mu}}(q))
_{\boldsymbol{\lambda},\boldsymbol{\mu}}$ and 
$\Delta^{-}(q) = (\Delta^{-}_{\boldsymbol{\lambda},\boldsymbol{\mu}}(q))
_{\boldsymbol{\lambda},\boldsymbol{\mu}}$ by 

\begin{align}
G^{+}(\boldsymbol{\lambda} ; \boldsymbol{s}) = \sum_{\boldsymbol{\mu}} 
\Delta^{+}_{\boldsymbol{\lambda},\boldsymbol{\mu}}(q) \, | \, \boldsymbol{\mu} ; \boldsymbol{s} \rangle 
\hspace{2em} , \hspace{3em} 
G^{-}(\boldsymbol{\lambda} ; \boldsymbol{s}) = \sum_{\boldsymbol{\mu}} 
\Delta^{-}_{\boldsymbol{\lambda},\boldsymbol{\mu}}(q) \, | \, \boldsymbol{\mu} ; \boldsymbol{s} \rangle .
\end{align}
\end{definition}

The entries $\Delta^{\pm}_{\boldsymbol{\lambda},\boldsymbol{\mu}}(q)$ are called {\it $q$-decomposition numbers}. 
Note that $q$-decomposition numbers $\Delta^{\pm}(q)$ depend on $n$, $\ell$ and $\boldsymbol{s}$.
The matrices $\Delta^{+}(q)$ and $\Delta^{-}(q)$ are also unitriangular with respect to $\ge$.

It is known \cite[Theorem 3.26]{U} that the entries of $\Delta^{-}(q)$ are 
	Kazhdan-Lusztig polynomials of parabolic submodules of affine Hecke algebras of type $A$, 
	and that they are polynomials in $q$ with non-negative integer coefficients (see \cite{KT}).

%
%

\section{A comparison of $q$-decomposition numbers}

\subsection{Sufficiently large and sufficiently small}

\begin{definition}
\rm{}Let $\boldsymbol{s} = (s_{1}, s_{2}, \cdots , s_{\ell}) \in \mathbb{Z}^{\ell}$ be a multi charge and $1 \leq j \leq \ell$.

(i). We say that the $j$-th component $s_j$ of the multi charge $\boldsymbol{s}$ is {\it sufficiently large} for 
$|\boldsymbol{\lambda} ; \boldsymbol{s} \rangle \in \boldsymbol{F}_{q}[\boldsymbol{s}]$ if 
\begin{equation}
s_{j} - s_{i} \geq \lambda^{(i)}_{1} 
\hspace{2em} \text{for all } \,\,\, i=1,2,\cdots , \ell .
\end{equation}
More generally, we say that $s_{j}$ is sufficiently large for a $q$-wedge $u_{\boldsymbol{k}}$ if  

\begin{equation}
s_{j} \ge c_{r} - n m_{r}  \hspace{2em} \text{ for all } r=1,2,\cdots ,
\end{equation}
where $k_{r} = c_{r} + n (d_{r} - 1) - n \ell m_{r}$, $(r = 1,2, \cdots)$, $1 \le c \le n$ and $1 \le d \le \ell$ (see \S 2).

(ii). We say that $s_{j}$ is {\it sufficiently small} for $| \boldsymbol{\lambda} ; \boldsymbol{s} \rangle $ if 
\begin{equation}
s_{i} - s_{j} \geq | \boldsymbol{\lambda} | = |\lambda^{(1)}| + \cdots + |\lambda^{(\ell)}| 
\hspace{2em} \text{for all } \,\,\, i \not= j .
\end{equation}
\label{largesmall}
\end{definition}

Note that the definition of sufficiently small depends only on the size of $\boldsymbol{\lambda}$ 
	and the multi charge $\boldsymbol{s}$. 
When we fix the multi charge $\boldsymbol{s}$, we say that $s_{j}$ is {\it sufficiently small} for $N$ if 
\begin{equation}
s_{i} - s_{j} \geq N \hspace{2em} \text{for all } \,\,\, i \not= j .
\end{equation}

\bf{Remark}. \rm{} If $|\boldsymbol{\lambda} ; \boldsymbol{s} \rangle$ is $0$-dominant in the sense of \cite{U}, that is 
$$ s_{i} - s_{i+1} \geq |\boldsymbol{\lambda} | = |\lambda^{(1)}| + \cdots + |\lambda^{(\ell)}| 
\hspace{2em} \text{for all } \,\,\, i=1,2,\cdots, \ell -1 \,\,, $$
then $s_{1}$ is sufficiently large for $|\boldsymbol{\lambda} ; \boldsymbol{s} \rangle$ 
and $s_{\ell}$ is sufficiently small for $|\boldsymbol{\lambda} ; \boldsymbol{s} \rangle$.

\begin{lemma}
If $s_{j}$ is sufficiently large for $|\boldsymbol{\lambda} ; \boldsymbol{s} \rangle$ and 
$|\boldsymbol{\lambda} ; \boldsymbol{s} \rangle \ge |\boldsymbol{\mu} ; \boldsymbol{s} \rangle$, 
then

$\mathrm{(i)}$ $\lambda^{(j)} = \emptyset$,

$\mathrm{(ii)}$ $s_{j}$ is also sufficiently large for $|\boldsymbol{\mu} ; \boldsymbol{s} \rangle$. 
In particular, $\mu^{(j)} = \emptyset$.
\label{lem.3.2}
\end{lemma}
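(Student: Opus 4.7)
The plan is to prove (i) directly from the definition of sufficient largeness applied to $i=j$, and then bootstrap (ii) from a comparison of the maximal elements of the multisets $\widetilde{\boldsymbol{\lambda}}$ and $\widetilde{\boldsymbol{\mu}}$ that enter the dominance order in Definition \ref{order}.

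For (i), I would specialize the inequality $s_{j}-s_{i}\geq\lambda_{1}^{(i)}$ to $i=j$, which gives $0\geq\lambda_{1}^{(j)}$ and therefore $\lambda^{(j)}=\emptyset$. For (ii), I would first bound $\tilde{\lambda}_{1}=\max\widetilde{\boldsymbol{\lambda}}$ from above by $s_{j}$: for each $d$, the largest entry contributed to $\widetilde{\boldsymbol{\lambda}}$ from the $d$-block is $\lambda_{1}^{(d)}+s_{d}$ (with the convention $\lambda_{1}^{(d)}=0$ when $\lambda^{(d)}=\emptyset$), and this is $\leq s_{j}$ by the sufficient-largeness hypothesis on $\boldsymbol{\lambda}$. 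The assumption $|\boldsymbol{\lambda};\boldsymbol{s}\rangle\geq|\boldsymbol{\mu};\boldsymbol{s}\rangle$, applied at $r=1$, gives $\tilde{\mu}_{1}\leq\tilde{\lambda}_{1}$ in case (a) and the stronger equality $\tilde{\mu}_{1}=\tilde{\lambda}_{1}$ in case (b), so in either case $\tilde{\mu}_{1}\leq s_{j}$. It then remains to convert this into the desired inequality $s_{j}-s_{i}\geq\mu_{1}^{(i)}$ for every $i$: when $\mu^{(i)}\neq\emptyset$ the number $\mu_{1}^{(i)}+s_{i}$ is an element of $\widetilde{\boldsymbol{\mu}}$, hence is $\leq\tilde{\mu}_{1}\leq s_{j}$; when $\mu^{(i)}=\emptyset$ the inequality reduces to $s_{j}\geq s_{i}$, which follows from $s_{j}-s_{i}\geq\lambda_{1}^{(i)}\geq 0$. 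Finally, $\mu^{(j)}=\emptyset$ is the special case $i=j$ of the sufficient-largeness just established for $\boldsymbol{\mu}$, by the same argument as in (i).

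The only real obstacle is bookkeeping around empty components and the way $\widetilde{\boldsymbol{\lambda}}$ and $\widetilde{\boldsymbol{\mu}}$ are padded to the common length $\max(l(\lambda^{(d)}),l(\mu^{(d)}))$ in each $d$-block: one must check that the comparison $\tilde{\lambda}_{1}\geq\tilde{\mu}_{1}$ is genuinely available from the definition of $\geq$ in both subcases (a) and (b), and that the contribution $s_{j}$ to $\widetilde{\boldsymbol{\lambda}}$ indexed by $d=j$ does not cause any issue (it only saturates, never violates, the bound $\tilde{\lambda}_{1}\leq s_{j}$). Once these are verified, the argument is a short combination of the definition of sufficient largeness with the dominance condition at the very first partial sum.
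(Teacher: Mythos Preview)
Your proof is correct and follows essentially the same route as the paper's: characterize ``$s_j$ sufficiently large'' by the single inequality $s_j\geq\tilde{\lambda}_1$, use the dominance condition at $r=1$ to get $\tilde{\mu}_1\leq\tilde{\lambda}_1$, and conclude. Your version is simply more careful than the paper's about the padding convention and the empty-component edge cases (the paper tacitly identifies $\tilde{\lambda}_1$ with $\max_i(\lambda_1^{(i)}+s_i)$ without commenting on blocks where both $\lambda^{(i)}$ and $\mu^{(i)}$ are empty), and about separating subcases (a) and (b) of Definition~\ref{order}; the underlying argument is the same.
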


\begin{proof}
It is clear that $\lambda^{(j)} = \emptyset$ by the definition.

Note that

\begin{align*}
s_{j} \text{ is sufficiently large for } |\boldsymbol{\lambda} ; \boldsymbol{s} \rangle  
& \Leftrightarrow s_{j} - s_{i} \ge \lambda^{(i)}_{1} \hspace{3em} \text{ for all } i=1,2,\cdots,\ell \\
& \Leftrightarrow s_{j} \ge  \max \{\lambda_{1}^{(1)} + s_{1} , \cdots , \lambda_{1}^{(\ell)} + s_{\ell} \} = \tilde{\lambda}_{1} .
\end{align*}

If $|\boldsymbol{\lambda} ; \boldsymbol{s} \rangle \ge |\boldsymbol{\mu} ; \boldsymbol{s} \rangle$, 
	then $\tilde{\lambda}_{1} \ge \tilde{\mu}_{1}$ and so $s_{j} \ge \tilde{\mu}_{1}$. 
It means that $s_{j}$ is sufficiently large for $|\boldsymbol{\mu} ; \boldsymbol{s} \rangle$.
\end{proof}

\begin{lemma}
Suppose that $s_{j}$ is sufficiently small for $|\boldsymbol{\lambda} ; \boldsymbol{s} \rangle$. 
If $|\boldsymbol{\lambda} ; \boldsymbol{s} \rangle \ge |\boldsymbol{\mu} ; \boldsymbol{s} \rangle$ and 
$\mu^{(j)} = \emptyset$, 
then $\lambda^{(j)} = \emptyset$.
\label{lem9}
\end{lemma}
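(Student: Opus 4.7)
The plan is to argue by contradiction: suppose $\lambda^{(j)} \ne \emptyset$, so $|\lambda^{(j)}| \ge 1$, and derive a violation of the dominance $|\boldsymbol{\lambda};\boldsymbol{s}\rangle \ge |\boldsymbol{\mu};\boldsymbol{s}\rangle$. The key observation is that, because $s_{j}$ is sufficiently small for $|\boldsymbol{\lambda};\boldsymbol{s}\rangle$, the entries of the multi-sets $\widetilde{\boldsymbol{\lambda}}$ and $\widetilde{\boldsymbol{\mu}}$ split cleanly into a ``large'' block coming from components $d \ne j$ and a ``small'' block coming from component $d=j$.

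First I would set $N = |\boldsymbol{\lambda}| = |\boldsymbol{\mu}|$, $L_d = \max(l(\lambda^{(d)}), l(\mu^{(d)}))$, and $R = \sum_{d \ne j} L_d$. For any $d \ne j$ the hypothesis gives $s_d \ge s_j + N$, and every entry from component $d$ in $\widetilde{\boldsymbol{\lambda}}$ or $\widetilde{\boldsymbol{\mu}}$ is $\ge s_d \ge s_j + N$. On the other hand every entry from component $j$ is at most $\lambda_1^{(j)} + s_j \le N + s_j$ (and those from $\mu^{(j)} = \emptyset$ are all exactly $s_j$). Hence, once we sort in decreasing order, the first $R$ positions of $\widetilde{\boldsymbol{\lambda}}$ can be taken to be exactly the $d\ne j$ contributions, and similarly for $\widetilde{\boldsymbol{\mu}}$; ties at the value $s_j + N$ can be shuffled between the two blocks without affecting any partial sum.

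The second step is a direct computation of the two partial sums. From the decomposition above,
\begin{align*}
\sum_{r=1}^{R} \tilde{\lambda}_{r} &= \sum_{d \ne j} \bigl(|\lambda^{(d)}| + L_d\, s_d\bigr) = \bigl(N - |\lambda^{(j)}|\bigr) + \sum_{d \ne j} L_d\, s_d, \\
\sum_{r=1}^{R} \tilde{\mu}_{r} &= \sum_{d \ne j} \bigl(|\mu^{(d)}| + L_d\, s_d\bigr) = N + \sum_{d \ne j} L_d\, s_d,
\end{align*}
the second equality using $\mu^{(j)} = \emptyset$ so $\sum_{d \ne j}|\mu^{(d)}| = N$. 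Subtracting gives $\sum_{r=1}^{R}\tilde{\mu}_r - \sum_{r=1}^{R}\tilde{\lambda}_r = |\lambda^{(j)}| \ge 1$, which is a strict inequality in the wrong direction for case (a) of Definition \ref{order}, and rules out case (b) as well since $\widetilde{\boldsymbol{\lambda}} = \widetilde{\boldsymbol{\mu}}$ would force equality of every partial sum. This contradicts $|\boldsymbol{\lambda};\boldsymbol{s}\rangle \ge |\boldsymbol{\mu};\boldsymbol{s}\rangle$, hence $\lambda^{(j)} = \emptyset$.

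The only delicate point, which is really a bookkeeping issue rather than a hard obstacle, is justifying that the top $R$ entries of each multi-set are genuinely the $d \ne j$ contributions. Possible ties at the boundary value $s_j + N$ (which can only happen in the edge case $s_d = s_j + N$ and $\lambda_1^{(j)} = N$ with some $\lambda_a^{(d)} = 0$) must be handled, but since any such swap preserves the partial sum at position $R$, the argument goes through verbatim. I would spell this ``swap'' observation out once and then let the computation above conclude.
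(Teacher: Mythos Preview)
Your argument is correct. Both you and the paper argue by contradiction and locate an index $r$ at which the partial-sum condition in Definition~\ref{order} fails, but you choose different indices. You cut at $R=\sum_{d\ne j}L_d$, using the ``sufficiently small'' hypothesis to ensure the $d\ne j$ contributions dominate the $d=j$ ones, and then compute both partial sums explicitly; the discrepancy is exactly $|\lambda^{(j)}|$. The paper instead looks at the very last position: since $\mu^{(j)}=\emptyset$ and $l(\lambda^{(j)})\ge 1$ forces at least one $j$-entry in each multi-set, the minimum of $\widetilde{\boldsymbol{\mu}}$ is $s_j$ while every element of $\widetilde{\boldsymbol{\lambda}}$ is strictly larger than $s_j$; as the total sums agree, the partial sum at the penultimate position already fails. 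The paper's route is shorter and sidesteps your tie-handling at the boundary value $s_j+N$, while your computation has the minor advantage of making the defect $|\lambda^{(j)}|$ explicit.
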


\begin{proof}
Suppose that $l( \lambda^{(j)} ) \ge 1$.
Then $s_{j}$ is the minimal integer in the set 
	$\{ \mu_{a}^{(d)} + s_{d}  \, | \, 1 \le d \le \ell  \, , \, 1 \le a \le \max ( l(\lambda^{(d)}) , l(\mu^{(d)}) )  \, \} \}$ 
	because $\mu^{(j)} = \emptyset$ and $s_{j}$ is the minimal integer in $\boldsymbol{s}$. 
On the other hand, the minimal integer in the set 
	$\{ \lambda_{a}^{(d)} + s_{d}  \, | \, 1 \le d \le \ell  \, , \, 1 \le a \le \max ( l(\lambda^{(d)}) , l(\mu^{(d)}) )  \, \} \}$ 
	is greater than $s_{j}$ because $s_{j}$ is sufficiently small for $|\boldsymbol{\lambda} ; \boldsymbol{s} \rangle$.
Therefore 
$|\boldsymbol{\lambda} ; \boldsymbol{s} \rangle \not\ge |\boldsymbol{\mu} ; \boldsymbol{s} \rangle$.
This is a contradiction.

\end{proof}

\subsection{Main results}

Now, we are ready to state our main theorems. 
We will prove the theorems in \S5 and \S6 respectively.

\begin{theorem}
Let $\varepsilon \in \{ +,- \}$. 
If $s_{j}$ is sufficiently large for $|\boldsymbol{\lambda} ; \boldsymbol{s} \rangle$, then 

\begin{equation}
\Delta^{\varepsilon}_{ \boldsymbol{\lambda} , \boldsymbol{\mu} ; \boldsymbol{s} }(q)
=\Delta^{\varepsilon}_{ \check{\boldsymbol{\lambda}} , \check{\boldsymbol{\mu}} ; \check{\boldsymbol{s}} }(q),
\end{equation}
where $\check{\boldsymbol{\lambda}}$ (resp. $\check{\boldsymbol{\mu}} , \check{\boldsymbol{s}})$ 
is obtained by omitting the $j$-th component of $\boldsymbol{\lambda}$ 
(resp. $\boldsymbol{\mu} , \boldsymbol{s})$.
\label{thmA}
\end{theorem}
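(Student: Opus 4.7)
The plan is to invoke the uniqueness characterization of the canonical bases $G^{\pm}(\boldsymbol{\lambda};\boldsymbol{s})$: they are the unique bar-invariant bases with $G^{+}(\boldsymbol{\lambda};\boldsymbol{s})\equiv |\boldsymbol{\lambda};\boldsymbol{s}\rangle\pmod{q\mathcal{L}^{+}}$ and $G^{-}(\boldsymbol{\lambda};\boldsymbol{s})\equiv |\boldsymbol{\lambda};\boldsymbol{s}\rangle\pmod{q^{-1}\mathcal{L}^{-}}$. It therefore suffices to construct a $\mathbb{Q}(q)$-linear bijection
\[
\Phi_{j}\colon V_{j}\;=\;\bigoplus_{\boldsymbol{\lambda}:\,s_{j}\text{ sufficiently large for }|\boldsymbol{\lambda};\boldsymbol{s}\rangle}\mathbb{Q}(q)\,|\boldsymbol{\lambda};\boldsymbol{s}\rangle\;\longrightarrow\;\boldsymbol{F}_{q}[\check{\boldsymbol{s}}],
\]
sending $|\boldsymbol{\lambda};\boldsymbol{s}\rangle\mapsto|\check{\boldsymbol{\lambda}};\check{\boldsymbol{s}}\rangle$, and to verify that $\Phi_{j}$ intertwines the bar involutions and transports the dominance order.

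By unitriangularity of the bar-involution matrix $(a_{\boldsymbol{\lambda},\boldsymbol{\mu}}(q))$ together with Lemma \ref{lem.3.2}(ii), $V_{j}$ is bar-stable. Lemma \ref{lem.3.2}(i) forces $\lambda^{(j)}=\emptyset$ for every basis vector of $V_{j}$, so $\Phi_{j}$ is a well-defined $\mathbb{Q}(q)$-isomorphism onto the full level-$(\ell-1)$ Fock space (any $\check{\boldsymbol{\lambda}}\in\Pi^{\ell-1}$ is recovered by inserting $\emptyset$ at position $j$). The dominance order is then straightforwardly transported: since $\lambda^{(j)}=\mu^{(j)}=\emptyset$, the empty component contributes nothing to the multisets of Definition \ref{order}, so case (a) is immediate; for case (b), the top portion of the integer sequences $(k_{i})$ and $(g_{i})$ at level $\ell$ comprises precisely the ground-state beads of sector $j$, which cancel pairwise and reduce the comparison to the remaining beads, matching the corresponding comparison at level $\ell-1$.

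The core technical step is to show that $\Phi_{j}\circ\overline{\phantom{x}}=\overline{\phantom{x}}\circ\Phi_{j}$ on $V_{j}$. From the definition (\ref{barinvolution}), one reverses the first $r$ factors of the $q$-wedge and then re-sorts back to ordered form by applying the straightening rules recalled in Section 4. Because $s_{j}$ is sufficiently large, the beads of sector $j$ occupy the ground-state positions $s_{j},s_{j}-1,s_{j}-2,\ldots$ on the level-$1$ abacus of that sector and dominate all other beads in the relevant ordering. I expect that every elementary straightening move involving such a bead either leaves it fixed or produces a factor equal to zero (two beads at the same position), so that the sector-$j$ beads factor out of the computation: their contribution to the $\kappa$-statistics in (\ref{barinvolution}) matches identically on both sides, and the reduced $q$-wedge coincides with the level-$(\ell-1)$ bar involution of $\Phi_{j}(|\boldsymbol{\lambda};\boldsymbol{s}\rangle)$.

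The main obstacle is this non-interaction claim. At level $\ell$ the straightening rules carry correction terms with both an $n$-shift (internal to one sector) and an $n\ell$-shift (potentially mixing sectors), so cross-sector exchanges are a priori possible. The sufficiently-large hypothesis $s_{j}-s_{i}\ge\lambda_{1}^{(i)}$ needs to be translated into a uniform combinatorial bound ensuring that no intermediate straightening step can place a non-sector-$j$ bead into a position still held by a sector-$j$ bead, nor move a sector-$j$ bead into sector $i$. This will require a careful case analysis against the explicit formulas collected in Section 4; once established, $\Phi_{j}$ intertwines bar and trivially preserves the reduction modulo $q\mathcal{L}^{+}$ and $q^{-1}\mathcal{L}^{-}$, so uniqueness of the canonical bases transports $G^{\pm}(\boldsymbol{\lambda};\boldsymbol{s})$ to $G^{\pm}(\check{\boldsymbol{\lambda}};\check{\boldsymbol{s}})$, yielding the claimed identity of $q$-decomposition numbers for both $\varepsilon\in\{+,-\}$ simultaneously.
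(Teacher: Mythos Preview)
Your overall strategy --- transport the canonical basis through a bijection $\Phi_j$ that intertwines bar and preserves the lattices $\mathcal{L}^{\pm}$ --- is exactly the paper's; Proposition~\ref{prop11} carries it out by setting $F = \sum_{\boldsymbol{\mu}\in\check{\Pi}^{\ell}} \Delta^{-}_{\check{\boldsymbol{\lambda}},\check{\boldsymbol{\mu}};\check{\boldsymbol{s}}}(q)\,|\boldsymbol{\mu};\boldsymbol{s}\rangle$ and verifying $\overline{F}=F$ directly.

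Your diagnosis of the obstacle, however, is off in two respects. First, the worry about ``cross-sector exchanges'' is misplaced: in the straightening rule (\ref{eq.22}) the sector labels $d_1,d_2$ are preserved in every term of the sum, so beads never change sector --- the $n\ell$-shift in (\ref{eq.21}) is a shift of the global position $k$, not of $d$. Second, and more seriously, factoring out the sector-$j$ ground state is not free: Corollary~\ref{cor7} gives
\[
u_{\boldsymbol{k}} \;=\; q^{-\xi(\emptyset^{[j]},\check{u_{\boldsymbol{k}}})}\,\emptyset^{[j]}\wedge\check{u_{\boldsymbol{k}}},
\]
with a nontrivial $q$-twist governed by the statistic $\xi$ of Definition~\ref{def-xi}. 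Combining this with Lemma~\ref{lem8} one finds $a_{\boldsymbol{\lambda},\boldsymbol{\mu}}(q) = q^{\,\xi_{\boldsymbol{\mu}}-\xi_{\boldsymbol{\lambda}}}\,a_{\check{\boldsymbol{\lambda}},\check{\boldsymbol{\mu}}}(q)$, where $\xi_{\boldsymbol{\nu}}:=\xi(\emptyset^{[j]},\check{v_{\boldsymbol{\nu}}})$; thus $\Phi_j$ intertwines bar only if $\xi_{\boldsymbol{\mu}}=\xi_{\boldsymbol{\lambda}}$ for every $\boldsymbol{\mu}$ appearing in the bar-expansion of $\boldsymbol{\lambda}$. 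This is precisely Lemma~\ref{lem6}, the key combinatorial input you have not identified. Your assertion that ``the $\kappa$-statistics match identically on both sides'' is exactly where this twist is hiding: the cross terms in $\kappa(c_1,\dots,c_r)$ between sector-$j$ and non-sector-$j$ beads contribute the $\xi$ factor, and it takes the argument of Lemma~\ref{lem6} (straightening two non-sector-$j$ beads preserves $\xi$ against $\emptyset^{[j]}$) to see that these contributions agree term by term.
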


\begin{theorem}
Let $\varepsilon \in \{ +,- \}$. 
If $s_{j}$ is sufficiently small for $|\boldsymbol{\mu} ; \boldsymbol{s} \rangle$ and $\mu^{(j)} = \emptyset$, then 

\begin{equation}
\Delta^{\varepsilon}_{ \boldsymbol{\lambda} , \boldsymbol{\mu} ; \boldsymbol{s} }(q)
=\Delta^{\varepsilon}_{ \check{\boldsymbol{\lambda}} , \check{\boldsymbol{\mu}} ; \check{\boldsymbol{s}} }(q),
\end{equation}
where $\check{\boldsymbol{\lambda}}$ (resp. $\check{\boldsymbol{\mu}} , \check{\boldsymbol{s}})$ 
is obtained by omitting the $j$-th component of $\boldsymbol{\lambda}$ 
(resp. $\boldsymbol{\mu} , \boldsymbol{s})$.
\label{thmB}
\end{theorem}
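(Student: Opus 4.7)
The plan is to exploit the uniqueness characterization of the canonical bases $G^{\pm}$ by lifting $G^{\varepsilon}(\check{\boldsymbol{\mu}};\check{\boldsymbol{s}})$ from level $\ell-1$ to level $\ell$. I would introduce the linear injection
\[
\iota\colon \boldsymbol{F}_q[\check{\boldsymbol{s}}] \hookrightarrow \boldsymbol{F}_q[\boldsymbol{s}],\qquad |\check{\boldsymbol{\nu}};\check{\boldsymbol{s}}\rangle \longmapsto |\boldsymbol{\nu};\boldsymbol{s}\rangle,
\]
where $\boldsymbol{\nu}$ is the $\ell$-tuple obtained from $\check{\boldsymbol{\nu}}$ by inserting $\emptyset$ as the $j$-th coordinate, and aim to show that $\iota$ intertwines the bar involutions on the finite-dimensional subspace
\[
W = \bigoplus_{|\check{\boldsymbol{\nu}}| \le |\check{\boldsymbol{\mu}}|} \mathbb{Q}(q)\,|\check{\boldsymbol{\nu}}; \check{\boldsymbol{s}}\rangle.
\]
Granting this, $\iota(G^{\varepsilon}(\check{\boldsymbol{\mu}};\check{\boldsymbol{s}}))$ becomes a bar-invariant element of $\boldsymbol{F}_q[\boldsymbol{s}]$ with leading term $|\boldsymbol{\mu};\boldsymbol{s}\rangle$ and remaining coefficients in $q\mathbb{Q}[q]$ if $\varepsilon=+$, respectively $q^{-1}\mathbb{Q}[q^{-1}]$ if $\varepsilon=-$. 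By the uniqueness of $G^{\varepsilon}(\boldsymbol{\mu};\boldsymbol{s})$ the two elements must then coincide, and Lemma \ref{lem9} (applied in the $|\boldsymbol{\lambda}|=|\boldsymbol{\mu}|$ regime, in which sufficient smallness for $\boldsymbol{\mu}$ coincides with sufficient smallness for $\boldsymbol{\lambda}$) guarantees that every $\boldsymbol{\lambda}$ contributing to $G^{\varepsilon}(\boldsymbol{\mu};\boldsymbol{s})$ already satisfies $\lambda^{(j)}=\emptyset$ and so lies in the image of $\iota$; comparing coefficients yields the claimed identity of $q$-decomposition numbers.

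The heart of the argument is the verification that $\iota$ intertwines the bar involution on $W$. For $\boldsymbol{\nu}$ with $\nu^{(j)}=\emptyset$ and $|\boldsymbol{\nu}|\le|\check{\boldsymbol{\mu}}|$, write $|\boldsymbol{\nu};\boldsymbol{s}\rangle$ as an ordered $q$-wedge $u_{k_1}\wedge u_{k_2}\wedge\cdots$ and pick a truncation $r$ large enough that within $\{k_1,\dots,k_r\}$ the $j$-sector indices form a consecutive tail $\{s_j,s_j-1,\dots,s_j-\rho+1\}$ while every non-$j$-sector index in $\{k_1,\dots,k_r\}$ strictly exceeds $s_j$; the hypothesis $s_i-s_j\ge|\boldsymbol{\mu}|$ together with $\nu^{(j)}=\emptyset$ makes such a choice possible. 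The key separation claim is that reversing $u_{k_r}\wedge\cdots\wedge u_{k_1}$ through the straightening rules of Section 4 produces only ordered $q$-wedges whose $j$-th sector is still the same tail $\{s_j,s_j-1,\dots,s_j-\rho+1\}$. Granted this, a direct inspection of the $\kappa$-statistics in Definition \ref{barinv} shows that the portion of the bar involution acting on the $j$-th sector contributes a global scalar that simplifies to $1$, while the remaining action reduces verbatim to the bar involution of $|\check{\boldsymbol{\nu}};\check{\boldsymbol{s}}\rangle$ in $\boldsymbol{F}_q[\check{\boldsymbol{s}}]$.

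I expect the main obstacle to be proving the separation claim rigorously. Each two-bead straightening $u_a\wedge u_b$ can in principle contribute correction terms that redistribute the pair of abacus positions, and one must show that no such correction crosses the sector boundary when $a$ or $b$ lies in the $j$-th sector. A careful case analysis of the straightening relations together with the gap inequality $s_i-s_j\ge|\boldsymbol{\mu}|$ should confirm that every potentially-mixing correction is forbidden: either the residue conditions triggering a correction are not met, or the magnitude of the required shift exceeds $s_i-s_j$ and so no non-tail $j$-sector wedge can appear in the expansion. An induction on the number of elementary swaps needed to reverse $u_{k_1}\wedge\cdots\wedge u_{k_r}$ then locks the $j$-sector tail into place throughout, establishing the separation claim and completing the proof of Theorem \ref{thmB}.
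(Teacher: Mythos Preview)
Your overall plan---lift $G^{\varepsilon}(\check{\boldsymbol{\mu}};\check{\boldsymbol{s}})$ to level $\ell$ and invoke uniqueness---is the right shape, but the central ``separation claim'' is false, and with it the assertion that $\iota$ intertwines the bar involutions. When you reverse $u_{k_1}\wedge\cdots\wedge u_{k_r}$ and straighten, terms with non-empty $j$-th sector \emph{do} appear. This is exactly the content of Lemma~\ref{lem14} and Corollary~\ref{cor30}: commuting the block $\emptyset^{[j]}$ past the reversed $\check v_{\boldsymbol\lambda}$ produces, besides the leading term, corrections of the form $v_\mu(q)\wedge\mu^{[j]}$ with $\mu\neq\emptyset$. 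The two-term straightening rule keeps the pair of sector labels $(d_1,d_2)$ but freely moves the \emph{positions within} each sector (the $c$-parts are swapped and the $m$-parts shifted), so a bead initially at $s_j-t$ in sector $j$ can end up at a position strictly above $s_j$. The gap hypothesis $s_i-s_j\ge|\boldsymbol{\mu}|$ does not block this: it constrains the level-1 positions of the \emph{non}-$j$ beads, not the range of the correction shifts inside sector $j$. So $\iota(\boldsymbol F_q[\check{\boldsymbol s}])$ is not bar-stable, and your argument cannot close as written.

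What does work is the dual construction. Lemma~\ref{lem9} goes in the direction ``$\mu^{(j)}=\emptyset\Rightarrow\lambda^{(j)}=\emptyset$'' for $|\boldsymbol\lambda;\boldsymbol s\rangle\ge|\boldsymbol\mu;\boldsymbol s\rangle$; by unitriangularity this says the span of $|\boldsymbol\lambda;\boldsymbol s\rangle$ with $\lambda^{(j)}\neq\emptyset$ is bar-stable, i.e.\ the \emph{kernel} of the obvious projection $\pi$ is preserved, not the image of $\iota$. The paper therefore passes to the quotient $\widetilde{\boldsymbol F_q}[\boldsymbol s]_{\le N}$, on which the bar involution descends, and shows (Lemma~\ref{lem31}) that after applying $\pi$ the unwanted $\mu^{[j]}$-corrections die; one then compares canonical bases in the quotient with those at level $\ell-1$. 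If you want to salvage your write-up, replace the embedding $\iota$ by this projection $\pi$ and prove the analogue of your intertwining statement modulo $\ker\pi$; your case analysis will then be aimed at the (true) claim that once a non-empty $j$-sector appears it persists, rather than the (false) claim that it never appears.
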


\begin{example}
$\mathrm{(i)}$ 
If $n = \ell = 2 $, $\boldsymbol{s} = (3,-3)$ and 
$\boldsymbol{\lambda} = (\emptyset , (6))$, 
$\boldsymbol{\mu} = (\emptyset , (5 , 1))$, 
then $s_{1}$ is sufficiently large for $|\boldsymbol{\lambda} ; \boldsymbol{s} \rangle$.
Hence 

\begin{align*}
\Delta^{-}_{ \boldsymbol{\lambda} , \boldsymbol{\mu} ; \boldsymbol{s} }(q)
&=\Delta^{-}_{ \check{\boldsymbol{\lambda}} , \check{\boldsymbol{\mu}} ; \check{\boldsymbol{s}} }(q) 
=\Delta^{-}_{ (6) , (5,1) ; (-3) }(q) 
= -q^{-1}.
\end{align*}

$\mathrm{(ii)}$ 
If $n = \ell = 2 $, $\boldsymbol{s} = (3,-3)$ and 
$\boldsymbol{\lambda} = ((6) , \emptyset)$, 
$\boldsymbol{\mu} = ((5 , 1) , \emptyset)$, 
then $s_{2}$ is sufficiently small for $|\boldsymbol{\mu} ; \boldsymbol{s} \rangle$. 
Hence 

\begin{align*}
\Delta^{-}_{ \boldsymbol{\lambda} , \boldsymbol{\mu} ; \boldsymbol{s} }(q)
&=\Delta^{-}_{ \check{\boldsymbol{\lambda}} , \check{\boldsymbol{\mu}} ; \check{\boldsymbol{s}} }(q) 
=\Delta^{-}_{ (6) , (5,1) ; (-3) }(q) 
= -q^{-1}.
\end{align*}

\end{example}

%
%

\section{ q-wedges and straightening rules}

In this section, we review the straightening rules \cite{U} to prove our main results.

\subsection{affine Hecke algebra and straightening rules}

In this paragraph, we review the affine Hecke algebra of type $A_{1}$ and straightening rules. 
We treat only the case of type $A_{1}$. 
Indeed for our proof we only need the straightening rule of $q$-wedge whose length is equal to two. 
In fact, the straightening rules for a $q$-wedge which length is greater than $2$ is obtained from 
the straightening rules for two adjacent element. (see Example \ref{ex3})
More general case, see \cite{U}.

The Hecke algebra $H$ of type $A_{1}$ is the algebra over $\mathbb{Q}(q)$ with generator $T_{1}$ and relation

\begin{equation}
(T_{1} - q^{-1}) (T_{1} + q) = 0 . 
\end{equation}

The affine Hecke algebra $\hat{H}$ is the tensor space $H$ 
and the polynomial ring $\mathbb{Q}(q)[X_{1}^{\pm},X_{2}^{\pm}]$ with relations 

\begin{align}
X^{\lambda} T_{1} &= T_{1} X^{s_{1}(\lambda)} + (q - q^{-1}) \frac{X^{s_{1}(\lambda)} - X^{\lambda}}{ 1 - X_{1}X_{2}^{-1} } \,\,\, , 
\label{relation1}  \\
T_{1} X^{\lambda} &= X^{s_{1}(\lambda)} T_{1} + (q - q^{-1}) \frac{X^{s_{1}(\lambda)} - X^{\lambda}}{ 1 - X_{1}X_{2}^{-1} } \,\,\, ,
\end{align}

where $\lambda \in \mathbb{Z}^{2}$ and $s_{1}$ is the transposition.

Let $P_{1}$ be the $\mathbb{Q}(q)$-vector space whose basis is 
$\{ \,\, ( c_{1} , c_{2} | \,\, | \,\, c_{1} , c_{2} \in \mathbb{Z} , 1 \le c_{1} \le n , 1 \le c_{2} \le n \}$. 
Define the right action of $H$ on $P_{1}$ as  

\begin{align}
( c_{1} , c_{2} | \cdot T_{1} =
\begin{cases}
\,\, ( c_{2} , c_{1} | & \text{ if } c_{1} < c_{2}  \,\,, \\
\,\, q^{-1} \, ( c_{1} , c_{1} | & \text{ if } c_{1} = c_{2} \,\,, \\
\,\, ( c_{2} , c_{1} | - (q-q^{-1}) \, ( c_{1} , c_{2} |  & \text{ if } c_{1} > c_{2}  \,\,.
\end{cases}
\label{rightaction}
\end{align}

Let $P_{2}$ be the $\mathbb{Q}(q)$-vector space whose basis is 
$\{ \,\, | d_{1},d_{2} ) \,\, | \,\,  d_{1} , d_{2} \in \mathbb{Z} , 1 \le d_{1} \le \ell , 1 \le d_{2} \le \ell \}$. 
Define the left action of $H$ on $P_{2}$ as  

\begin{align}
T_{1} \cdot | d_{1} , d_{2} ) =
\begin{cases}
\,\, | d_{2} , d_{1} ) & \text{ if } d_{1} < d_{2}  \,\,, \\
\,\, -q \, | d_{1} , d_{1} ) & \text{ if } d_{1} = d_{2} \,\,, \\
\,\, | d_{2} ,  d_{1} ) - (q-q^{-1}) \, | d_{1} , d_{2} )  & \text{ if } d_{1} > d_{2}  \,\,.
\end{cases} 
\label{leftaction}
\end{align}

Define a vector space $\Lambda$ by 

\begin{equation*}
\Lambda = P_{1} \otimes_{H} \hat{H} \otimes_{H} P_{2} . 
\end{equation*}

\begin{definition}\cite{U}
For $( c_{1} , c_{2} | \in P_{1}$, $ | d_{1} , d_{2} ) \in P_{2}$, and $m_{1},m_{2} \in \mathbb{Z}$, 
put $k_{j} = c_{j} + n (d_{j} - 1) - n \ell m_{j}$, $(j=1,2)$.
Denote $( c_{1} , c_{2} | \otimes X_{1}^{m_{1}} X_{2}^{m_{2}} \otimes | d_{1} , d_{2} ) \in \Lambda$ by 

\begin{equation}
u_{k_{1}} \wedge u_{k_{2}} .
\end{equation}
\label{q-wedge}
\end{definition}

\begin{proposition}\cite{U}
For integers $k_{1},k_{2}$, let $c_{j},d_{j},m_{j}$ be the unique integers satisfying $k_{j} = c_{j} + n (d_{j} - 1) - n \ell m_{j}$, 
$1 \le c_{j} \le n$ and $1 \le d_{j} \le \ell$, $(j=1,2)$.
Then, 

\begin{align}
u_{k_{2}} \wedge u_{k_{1}} = 
(-q^{-1})^{\delta_{d_{1} = d_{2} }} \biggl\{
q^{\alpha} \, u_{k_{1}} \wedge u_{k_{2}} + \mathrm{sgn}(m) \,
(q - q^{-1}) \sum_{j= \beta }^{ | m_{1} - m_{2} |  - \gamma} 
u_{k_{1} - c_{1}+c_{2} - \mathrm{sgn}(m) n \ell j} \wedge u_{k_{2} + c_{1} - c_{2} + \mathrm{sgn} (m) n \ell j}
\biggr\},
\label{eq.21}
\end{align}
where 
$$\mathrm{sgn}(m) = 
\begin{cases}
1 & \text{ if } \,\, m_{1} < m_{2} \\
-1 & \text{ if } \,\, m_{1} > m_{2} \\
0 & \text{ if } \,\, m_{1} = m_{2} 
\end{cases}  \quad , \quad    
\alpha =
\begin{cases}
1 & \text{ if } \,\, c_{1} = c_{2} \text{ and } k_{1} > k_{2} \\
-1 & \text{ if } \,\, c_{1} = c_{2} \text{ and } k_{1} < k_{2} \\
0 & \text{ if } \,\, c_{1} \not= c_{2}
\end{cases} \quad , $$ 
$$ \delta_{d_{1} = d_{2}}  =
\begin{cases}
1 & \text{ if } d_{1} = d_{2} \\
0 & \text{ if } d_{1} \not= d_{2}
\end{cases} \quad , \quad 
\beta =
\begin{cases}
0 & \text{ if } \,\, c_{1} > c_{2} , m_{1} < m_{2}  \text{ or } c_{1} < c_{2} , m_{1} > m_{2}   \\
1 & \text{ if } \,\, otherwise
\end{cases} \quad , $$ 
and 
$$ \gamma =
\begin{cases}
1 & \text{ if } \,\, d_{1} < d_{2} , m_{1} < m_{2}  \text{ or } d_{1} > d_{2} , m_{1} > m_{2}   \\
0 & \text{ if } \,\, d_{1} > d_{2} , m_{1} < m_{2}  \text{ or } d_{1} < d_{2} , m_{1} > m_{2}   
\end{cases}. $$
\label{s.rule}
\end{proposition}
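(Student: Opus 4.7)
The plan is to work directly in the tensor product $\Lambda = P_{1}\otimes_{H}\hat{H}\otimes_{H}P_{2}$, starting from the representation
\[
u_{k_{2}}\wedge u_{k_{1}} = (c_{2},c_{1}|\otimes X_{1}^{m_{2}}X_{2}^{m_{1}}\otimes |d_{2},d_{1})
\]
given by Definition~\ref{q-wedge}, and to reduce it modulo the $\otimes_{H}$-balance to an expression indexed by $u_{k_{1}}\wedge u_{k_{2}}$ and its shifts. The main mechanism is to insert $T_{1}T_{1}^{-1}=1$ in the middle factor, move $T_{1}$ onto the right $P_{2}$-slot via the $\otimes_{H}$-balance, and apply the affine Hecke relation (\ref{relation1}) to commute $T_{1}$ past the monomial:
\[
X_{1}^{m_{2}}X_{2}^{m_{1}}\,T_{1} = T_{1}\,X_{1}^{m_{1}}X_{2}^{m_{2}} + (q-q^{-1})\,\frac{X_{1}^{m_{1}}X_{2}^{m_{2}} - X_{1}^{m_{2}}X_{2}^{m_{1}}}{1-X_{1}X_{2}^{-1}}.
\]
The first summand supplies a ``main'' contribution in which $T_{1}$ is absorbed onto both flanks, while the rational remainder supplies the correction sum.

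Next I would expand the rational fraction as a finite polynomial by a direct geometric-series computation, obtaining
\[
\frac{X_{1}^{m_{1}}X_{2}^{m_{2}} - X_{1}^{m_{2}}X_{2}^{m_{1}}}{1-X_{1}X_{2}^{-1}} = \mathrm{sgn}(m)\sum_{j=0}^{|m_{1}-m_{2}|-1} X_{1}^{\min(m_{1},m_{2})+j}\,X_{2}^{\max(m_{1},m_{2})-j},
\]
with the convention that the sum is empty (hence zero) when $m_{1}=m_{2}$. Translating each monomial $X_{1}^{a}X_{2}^{b}$ back through Definition~\ref{q-wedge}, the tensor $(c_{2},c_{1}|\otimes X_{1}^{a}X_{2}^{b}\otimes|d_{2},d_{1})$ is a wedge whose labels $k_{1}',k_{2}'$ differ from $k_{1},k_{2}$ by exactly $-c_{1}+c_{2}-\mathrm{sgn}(m)n\ell j$ and $+c_{1}-c_{2}+\mathrm{sgn}(m)n\ell j$ respectively; this matches the summand appearing in the statement.

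To fix the remaining prefactors, I would evaluate $(c_{2},c_{1}|\cdot T_{1}$ and $T_{1}^{-1}\cdot |d_{2},d_{1})$ via the actions (\ref{rightaction}) and (\ref{leftaction}) together with the inverse formula $T_{1}^{-1} = T_{1}+(q-q^{-1})$ derived from $(T_{1}-q^{-1})(T_{1}+q)=0$. The diagonal case $d_{1}=d_{2}$ yields the factor $(-q^{-1})^{\delta_{d_{1}=d_{2}}}$, the diagonal case $c_{1}=c_{2}$ yields $q^{\alpha}$, and the off-diagonal cases produce an extra $(q-q^{-1})$-term in which $(c_{2},c_{1}|$ or $|d_{2},d_{1})$ is left unswapped. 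These stray terms coincide, after the monomial expansion, with one of the boundary summands ($j=0$ or $j=|m_{1}-m_{2}|-1$) of the correction sum, and they therefore get absorbed into or cancelled with a boundary index; the values of $\beta$ and $\gamma$ in the statement are exactly the book-keeping record of which endpoint is removed in each sign configuration.

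The principal obstacle will be the case analysis rather than any single computation: the trichotomies $c_{1}\lessgtr c_{2}$, $d_{1}\lessgtr d_{2}$, $m_{1}\lessgtr m_{2}$ give essentially $3\times3\times3$ subcases, each of which reduces to a one-line use of (\ref{relation1}), (\ref{rightaction}), (\ref{leftaction}), and the geometric expansion above. The difficult part is matching the resulting expressions uniformly against the combined formula — in particular, verifying case-by-case that the boundary summands absorb precisely in the way prescribed by $\beta$ and $\gamma$, and that the combined sign aggregating $\mathrm{sgn}(m)$, $q^{\alpha}$, $(-q^{-1})^{\delta_{d_{1}=d_{2}}}$ comes out correctly. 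Once the pattern is verified in one representative case from each octant, the remaining cases follow by the same template.
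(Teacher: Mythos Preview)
Your approach is correct and essentially the same as the paper's: both work in $\Lambda=P_1\otimes_H\hat H\otimes_H P_2$, push a Hecke generator through the monomial via relation~(\ref{relation1}), expand the resulting polynomial (your geometric-series identity is exactly the paper's identity $X_1^{N}T_1=T_1^{-1}X_2^{N}+(q-q^{-1})\sum_{j=1}^{N-1}X_1^{j}X_2^{N-j}$ in slightly different packaging), and then evaluate the $H$-actions (\ref{rightaction}),(\ref{leftaction}) on the flanks; the paper also treats only one representative case ($c_1=c_2$, $m_1<m_2$, $d_1<d_2$) and declares the remaining cases similar, just as you propose.

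One small slip to fix when you write it up: after balancing, the correction monomials carry $T_1^{-1}|d_2,d_1)$ on the right, not $|d_2,d_1)$; it is only after evaluating $T_1^{-1}|d_2,d_1)$ (which in the clean case $d_1<d_2$ gives simply $|d_1,d_2)$) that you can read off the wedge indices, and this is why your computed shifts $k_1-c_1+c_2-\mathrm{sgn}(m)n\ell j$, $k_2+c_1-c_2+\mathrm{sgn}(m)n\ell j$ come out with $d$-parts $(d_1,d_2)$ rather than $(d_2,d_1)$. The paper avoids this extra step by writing $|d_2,d_1)=T_1|d_1,d_2)$ directly (for $d_1<d_2$) instead of inserting $T_1T_1^{-1}$, which is marginally cleaner but equivalent.
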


\begin{proof}
We only show the statement in the case $c_{1}=c_{2} \,,\, m_{1} < m_{2} \text{, and } d_{1} < d_{2}$.
The other case can be treated similarly.

In this case, $k_{1} > k_{2}$, $\delta_{d_{1} = d_{2}} = 0$, $\mathrm{sgn}(m) = 1$, $\alpha = 1$, $\beta = 1$, and $\gamma = 1$. 
Note that $X_{1}X_{2}$ and $T_{1}$ commute each other thanks to the relation (\ref{relation1}),
that is $X_{1}X_{2} T_{1} = T_{1} X_{1}X_{2}$.
From the relation (\ref{relation1}), for any positive integer $N$ we have

\begin{equation}
X_{1}^{N} T_{1} = T_{1}^{-1} X_{2}^{N} + (q - q^{-1}) ( X_{1} X_{2}^{N-1} + X_{1}^{2} X_{2}^{N-2} + \cdots + X_{1}^{N-1}X_{2} ).
\label{eq.1}
\end{equation}

Hence 

\begin{align*}
& u_{k_{2}} \wedge u_{k_{1}} \\
=& ( c_{1},c_{1} | \otimes X_{1}^{m_{2}} X_{2}^{m_{1}} \otimes | d_{2} , d_{1} ) 
\hspace{5em} \text{ (by Definition \ref{q-wedge}) } \\
=& ( c_{1},c_{1} | \otimes (X_{1}X_{2})^{m_{1}} X_{1}^{m_{2}-m_{1}} T_{1} \otimes | d_{1} , d_{2} )  
\hspace{5em} \text{ (by (\ref{leftaction})) } \\
=& ( c_{1},c_{1} | \otimes (X_{1}X_{2})^{m_{1}} \biggl\{
T_{1}^{-1} X_{2}^{m_{2}-m_{1}} + (q - q^{-1}) ( X_{1} X_{2}^{m_{2}-m_{1}-1} + X_{1}^{2} X_{2}^{m_{2}-m_{1}-2} + 
\cdots + X_{1}^{m_{2}-m_{1}-1}X_{2} ) 
\biggr\} \otimes | d_{1} , d_{2} )  \\
& \hspace{7em} \text{ (by (\ref{eq.1})) } \\
=& q \, ( c_{1},c_{1} | \otimes X_{1}^{m_{1}}X_{2}^{m_{2}} \otimes | d_{1} , d_{2} ) 
+ (q-q^{-1}) \, ( c_{1},c_{1} | \otimes \bigg( X_{1}^{m_{1}+1} X_{2}^{m_{2}-1} + X_{1}^{m_{1}+2} X_{2}^{m_{2}-2} + 
\cdots + X_{1}^{m_{2}-1}X_{2}^{m_{1}+1} \bigg) \otimes | d_{1} , d_{2} ) \\
& \hspace{7em} \text{ (by (\ref{rightaction})) } \\
=& q \, u_{k_{1}} \wedge u_{k_{2}} + (q - q^{-1}) \, \bigg( 
u_{k_{1} - n \ell} \wedge u_{k_{2} + n \ell} + u_{k_{1} - 2 n \ell} \wedge u_{k_{2} + 2 n \ell} + \cdots +
u_{k_{1} - n \ell (m_{2} - m_{1} -1)} \wedge u_{k_{2} + n \ell (m_{2} - m_{1} -1)}  \bigg) \\
& \hspace{7em} \text{ (by Definition \ref{q-wedge}) } \\
=& q \, u_{k_{1}} \wedge u_{k_{2}} + (q - q^{-1}) \, 
\sum_{j=1}^{m_{2} - m_{1} - 1}
u_{k_{1} - n \ell j} \wedge u_{k_{2} + n \ell j}.
\end{align*}

\end{proof}

The identity (\ref{eq.21}) is rewritten in terms of the notation of Definition \ref{notation} as follows. 

\begin{corollary}
Under the same notations in Proposition \ref{s.rule}, we have 
\begin{align}
u_{c_{2} - n m_{2}}^{(d_{2})} \wedge u_{c_{1} - n m_{1}}^{(d_{1})}  
	&= (-q^{-1})^{\delta_{d_{1} = d_{2} }} \,
	q^{\alpha} \, u_{c_{1} - n m_{1}}^{(d_{1})} \wedge u_{c_{2} - c m_{2}}^{(d_{2})}  
	\nonumber \\
	&+ \mathrm{sgn}(m) \,
	(-q^{-1})^{\delta_{d_{1} = d_{2} }} \, (q - q^{-1}) \sum_{j= \beta }^{ | m_{1} - m_{2} |  - \gamma} 
	u_{c_{2} - n m_{1} - \mathrm{sgn}(m) n j}^{(d_{1})} \wedge u_{c_{1} - n m_{2} + \mathrm{sgn} (m) n j}^{(d_{2})}.
\label{eq.22}
\end{align}
\label{s-rules-cor}
\end{corollary}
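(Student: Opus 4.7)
The plan is to treat this corollary as a purely notational recasting of Proposition \ref{s.rule}. Since Definition \ref{notation} sets $u_k = u_{c - nm}^{(d)}$ via the unique decomposition $k = c + n(d-1) - n\ell m$ with $1 \le c \le n$ and $1 \le d \le \ell$, I would simply substitute this translation term-by-term into equation (\ref{eq.21}).

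The two ``easy'' pieces are immediate. The left-hand side $u_{k_2} \wedge u_{k_1}$ is, by definition, $u_{c_2 - nm_2}^{(d_2)} \wedge u_{c_1 - nm_1}^{(d_1)}$, and the leading term $u_{k_1} \wedge u_{k_2}$ on the right becomes $u_{c_1 - nm_1}^{(d_1)} \wedge u_{c_2 - nm_2}^{(d_2)}$. The scalar factors $(-q^{-1})^{\delta_{d_1 = d_2}} q^\alpha$ and $\mathrm{sgn}(m) (-q^{-1})^{\delta_{d_1 = d_2}} (q-q^{-1})$, together with the summation bounds determined by $\beta$ and $\gamma$, do not depend on the specific form of the $u_k$'s and carry over verbatim.

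The only nontrivial step is identifying the canonical $(c,d,m)$-decomposition of the shifted indices in the sum. Using $k_r = c_r + n(d_r - 1) - n\ell m_r$, I would compute
\begin{align*}
k_1 - c_1 + c_2 - \mathrm{sgn}(m)\, n\ell j
&= c_2 + n(d_1 - 1) - n\ell \bigl( m_1 + \mathrm{sgn}(m)\, j \bigr), \\
k_2 + c_1 - c_2 + \mathrm{sgn}(m)\, n\ell j
&= c_1 + n(d_2 - 1) - n\ell \bigl( m_2 - \mathrm{sgn}(m)\, j \bigr).
\end{align*}
Since $1 \le c_1, c_2 \le n$ and $1 \le d_1, d_2 \le \ell$, these are already in canonical form, so Definition \ref{notation} gives
\begin{equation*}
u_{k_1 - c_1 + c_2 - \mathrm{sgn}(m)\, n\ell j} = u_{c_2 - nm_1 - \mathrm{sgn}(m)\, nj}^{(d_1)},
\qquad
u_{k_2 + c_1 - c_2 + \mathrm{sgn}(m)\, n\ell j} = u_{c_1 - nm_2 + \mathrm{sgn}(m)\, nj}^{(d_2)}.
\end{equation*}
Substituting these into the summation in (\ref{eq.21}) yields exactly the right-hand side of (\ref{eq.22}).

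There is essentially no obstacle: the corollary is a dictionary translation, not a new computation. The only bookkeeping worth double-checking is that swapping $c_1 \leftrightarrow c_2$ in the shifted terms is faithfully encoded by the superscripts $(d_1), (d_2)$ on the two factors, and that the signs of the increments $\pm \mathrm{sgn}(m)\, n j$ in the $m$-coordinates are consistent with the sign conventions used for $\beta$ and $\gamma$ in Proposition \ref{s.rule}. Both can be verified by reading off the cases as in the displayed computation above.
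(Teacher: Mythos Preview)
Your proposal is correct and matches the paper's approach exactly: the corollary is stated without proof there, as it is precisely the notational rewriting of Proposition~\ref{s.rule} via Definition~\ref{notation} that you carry out. Your explicit check of the canonical $(c,d,m)$-decomposition of the shifted indices is the only substantive point, and it is done correctly.
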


\bf{Remarks.} \rm{}
(i) Note that the identity (\ref{eq.22}) depends only on the inequality relationship between $d_{1}$ and $d_{2}$
($c_{1}$ and $c_{2}$). 
It is independent of $\ell$. 

Let $\ell$ and $j$ be integers such that $1 \le j \le \ell+1$. 
Let 
$$u = u_{k_{1}}^{(d_{1})} \wedge u_{k_{2}}^{(d_{2})} \wedge \cdots$$ 
be a $q$-wedge product of level $\ell$. 
Define $d_{1}', d_{2}' ,\cdots$ as 
$$d_{r}'=\begin{cases}
	d_{r} & \text{ if } \,\, d_{r}<j \\
	d_{r}+1 & \text{ if } \,\, d_{r} \ge j
	\end{cases} \quad ,  \quad (r=1,2,\cdots) . $$
Then, 
$$u' = u_{k_{1}}^{(d_{1}')} \wedge u_{k_{2}}^{(d_{2}')} \wedge \cdots$$ 
is the $q$-wedge product of level $\ell +1$. 
In this way, we regard a $q$-wedge product $u$ of level $\ell$ as the $q$-wedge product of level $\ell +1$.

(ii). Let $k_{1}' = k_{1} - c_{1}+c_{2} - \mathrm{sgn}(m) n \ell j$ and $k_{2}' = k_{2} + c_{1} - c_{2} + \mathrm{sgn} (m) n \ell j$. 
That is to say, $u_{k_{1}'} \wedge u_{k_{2}'}$ appears in the summation of (\ref{eq.21}).
Then, $k_{1}'$ and $k_{2}'$ satisfy following properties. 

(a). $k_{1}'$ and $k_{2}'$ are in between $k_{1}$ and $k_{2}$, i.e. 
$ k_{1} < k_{i}' < k_{2}$, $(i=1,2)$ or $ k_{1} > k_{i}' > k_{2}$, $(i=1,2)$. 

(b). $k_{1}'$ and $k_{2}'$ swap the \it{c-part} \rm{} with $k_{1}$ and $k_{2}$.
That is, there exist $m_{1}' , m_{2}' \in \mathbb{Z}$ such that 
$k_{1}' = c_{2} + n (d_{1} -1) - n \ell m_{1}'$ and $k_{2}' = c_{1} + n (d_{2} -1) - n \ell m_{2}'$.

(c). $k_{1}' + k_{2}' = k_{1} + k_{2}$.

In abacus presentation, the positions of $k_{1}' , k_{2}'$ and $k_{1} , k_{2}$ look like 

$$\begin{array}{cc|cc}
d=d_{1} & & d=d_{2} &   \\ 
\vdots & \vdots & \vdots & \centercircle{$k_{2}$}    \\
\vdots & \centercircle{$k_{2}'$} & \vdots & \vdots      \\ 
\vdots & \vdots & \vdots & \vdots      \\ 
\vdots & \vdots & \centercircle{$k_{1}'$} & \vdots     \\ 
\centercircle{$k_{1}$} & \vdots & \vdots & \vdots     \\
\vdots & \vdots & \vdots & \vdots  \\
c=c_{1} & c=c_{2} & c=c_{1} & c=c_{2}  
\end{array}$$

\subsection{Several properties of $q$-wedge products}

In this paragraph, we summarize other properties of $q$-wedge products 
	which will be needed in the proof of our main theorems.  

\begin{lemma}[\cite{U}]
If $k \ge t$, then 

$(i)$. $u_{t} \wedge u_{k} \wedge u_{k-1} \wedge \cdots \wedge u_{t} = 0$, 

$(ii)$. $u_{k} \wedge u_{k-1} \wedge \cdots \wedge u_{t} \wedge u_{k} = 0$.
\label{vanishinglemma}
\end{lemma}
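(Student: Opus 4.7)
The plan is to prove parts (i) and (ii) simultaneously by induction on $N=k-t\ge 0$. For the base case $N=0$, both statements reduce to showing $u_t\wedge u_t=0$. Unfolding Definition~\ref{q-wedge}, we have $u_t\wedge u_t=(c,c|\otimes(X_1X_2)^m\otimes|d,d)$ for the unique $(c,d,m)$ with $t=c+n(d-1)-n\ell m$. Since $X_1X_2$ commutes with $T_1$ by (\ref{relation1}), we may insert $T_1\otimes T_1^{-1}$; combining the right action $(c,c|T_1=q^{-1}(c,c|$ from (\ref{rightaction}) with the left action $T_1|d,d)=-q|d,d)$ from (\ref{leftaction}) (whence $T_1^{-1}|d,d)=-q^{-1}|d,d)$), we obtain $u_t\wedge u_t=-q^{-2}\,u_t\wedge u_t$, forcing $u_t\wedge u_t=0$.

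For the inductive step, assume both statements hold for all smaller values of the difference, and consider $k-t=N\ge 1$. To prove~(ii), I would apply the straightening formula of Proposition~\ref{s.rule} to the adjacent pair $u_t\wedge u_k$ at the right end of the wedge. This rewrites the whole wedge as
\begin{equation*}
A\cdot u_k\wedge u_{k-1}\wedge\cdots\wedge u_{t+1}\wedge u_k\wedge u_t \;+\; \sum_i B_i\, u_k\wedge u_{k-1}\wedge\cdots\wedge u_{t+1}\wedge u_{k_i'}\wedge u_{t_i'},
\end{equation*}
where by the remarks after Corollary~\ref{s-rules-cor} one has $t<k_i',t_i'<k$ and $k_i'+t_i'=k+t$. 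The leading term contains the sub-wedge $u_k\wedge u_{k-1}\wedge\cdots\wedge u_{t+1}\wedge u_k$, which is precisely statement~(ii) for the smaller pair $(k,t+1)$ of difference $N-1$, and so vanishes by the inductive hypothesis. For~(i), a mirror-image argument straightens $u_t\wedge u_k$ at the left end; the leading term becomes $A\cdot u_k\wedge(u_t\wedge u_{k-1}\wedge\cdots\wedge u_t)$, in which the parenthesized factor is (i) for the smaller pair $(k-1,t)$ and vanishes by induction.

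The main obstacle is disposing of the correction terms. Because the new indices $k_i',t_i'$ lie strictly inside $(t,k)$, they numerically coincide with indices already present in the decreasing middle block $u_{k-1}\wedge\cdots\wedge u_{t+1}$, so every correction wedge contains a duplicate index, but the two copies are not yet adjacent. To finish, I would recursively apply the straightening rule to bring each duplicate next to its copy; crucially, each subsequent swap introduces only new indices in the still-narrower open interval strictly between the two equal values, so a lexicographic descent on (width of the involved index range, length of the sub-wedge) terminates and eventually produces either $u_x\wedge u_x=0$ factors or smaller instances of~(i) and~(ii) covered by the inductive hypothesis. A cleaner packaging would be to first establish a strengthened lemma permitting any decreasing run between the two copies of the duplicated index, and then deduce~(i) and~(ii) as special cases.
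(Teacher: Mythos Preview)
The paper does not actually prove this lemma; it is simply quoted from Uglov~\cite{U} without proof, and the subsequent Corollary~\ref{cor.5} is deduced from it. So there is no ``paper's own proof'' to compare against, and I evaluate your argument on its merits.

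Your base case and the treatment of the leading term are fine. The gap is in your handling of the correction terms. You propose to ``recursively apply the straightening rule to bring each duplicate next to its copy'' under a lexicographic descent; this is not wrong in spirit, but it is both unjustified as written and unnecessary. The point you are missing is that the inductive hypothesis already applies \emph{directly}, via associativity of the wedge. For (ii), a correction term looks like
\[
u_{k}\wedge u_{k-1}\wedge\cdots\wedge u_{t+1}\wedge u_{k'}\wedge u_{t'}
\]
with $t<k',t'<k$ (Remark~(ii)(a) after Corollary~\ref{s-rules-cor}). Since $t+1\le k'\le k-1$, the index $k'$ occurs in the descending block, and associativity lets you isolate the contiguous factor
\[
u_{k'}\wedge u_{k'-1}\wedge\cdots\wedge u_{t+1}\wedge u_{k'},
\]
which is exactly statement~(ii) for the pair $(k',t+1)$ of difference $k'-(t+1)\le k-t-2<N$. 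Hence this factor vanishes by the inductive hypothesis, and the whole term is zero. The mirror argument works for (i): in $u_{k'}\wedge u_{t'}\wedge u_{k-1}\wedge\cdots\wedge u_{t}$, isolate $u_{t'}\wedge u_{k-1}\wedge\cdots\wedge u_{t'}$, which is (i) for $(k-1,t')$ with difference $<N$. No further straightening and no auxiliary descent measure are needed.

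One more remark: your suggested ``cleaner packaging'' (a strengthened lemma allowing any decreasing run between the duplicated indices) is precisely Corollary~\ref{cor.5}, which the paper derives \emph{from} this lemma. If you reverse that dependency you must set up the induction so that the generalized statement is what you assume at each step; otherwise the argument is circular.
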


More generally, we have

\begin{corollary}
If $k \ge m \ge t$, then

$(i)$. $u_{m} \wedge u_{k} \wedge u_{k-1} \wedge \cdots \wedge u_{t} = 0$, 

$(ii)$. $u_{k} \wedge u_{k-1} \wedge \cdots \wedge u_{t} \wedge u_{m} = 0$.

\label{cor.5}
\end{corollary}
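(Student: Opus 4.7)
The plan is to reduce both parts directly to Lemma \ref{vanishinglemma} by using the associativity of the $q$-wedge product (which is implicit in the construction recalled in \S4.1 and already exploited in Example \ref{ex3}). Since $k \ge m \ge t$, the index $m$ occurs inside the descending sequence $k, k-1, \ldots, t$, so in each of the expressions of the corollary the factor $u_m$ is repeated. The idea is simply to bracket off a subwedge whose two occurrences of $u_m$ sandwich exactly the pattern appearing in Lemma \ref{vanishinglemma}.

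For part (i), I would rewrite
\[
u_{m} \wedge u_{k} \wedge u_{k-1} \wedge \cdots \wedge u_{t}
= \bigl( u_{m} \wedge u_{k} \wedge u_{k-1} \wedge \cdots \wedge u_{m+1} \wedge u_{m} \bigr) \wedge u_{m-1} \wedge \cdots \wedge u_{t}.
\]
The bracketed factor is precisely the expression in Lemma \ref{vanishinglemma}~(i) after relabelling $t \leftarrow m$, which is valid because $k \ge m$; hence it vanishes, and therefore so does the whole product. For part (ii) I would analogously group
\[
u_{k} \wedge u_{k-1} \wedge \cdots \wedge u_{t} \wedge u_{m}
= u_{k} \wedge u_{k-1} \wedge \cdots \wedge u_{m+1} \wedge \bigl( u_{m} \wedge u_{m-1} \wedge \cdots \wedge u_{t} \wedge u_{m} \bigr),
\]
and now the bracketed factor is Lemma \ref{vanishinglemma}~(ii) after relabelling $k \leftarrow m$ (valid since $m \ge t$), hence is zero.

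There is no substantive obstacle: the whole content of the corollary is the observation that the repeated $u_m$ already annihilates a subfactor via the previous lemma, while the remaining entries merely tag along. The one thing worth flagging is the use of associativity to insert the brackets, but this is built into the construction of the $q$-wedge product and used freely throughout \cite{U} and the earlier parts of the paper, so it does not require a separate justification here.
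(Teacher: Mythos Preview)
Your proof is correct. For part (i) you do exactly what the paper does: it simply says ``the first assertion immediately follows from Lemma \ref{vanishinglemma} (i)'', which is precisely your bracketing argument.

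For part (ii), however, your route is genuinely different from the paper's. The paper proves (ii) by induction on $m-t$: the base case $m=t$ is Lemma \ref{vanishinglemma}~(ii), and for the inductive step it applies the straightening rule to the last two factors,
\[
u_t \wedge u_m = \sum_{j=0}^{m-t} b_j(q)\, u_{m-j} \wedge u_{t+j},
\]
and then invokes the induction hypothesis on each $u_k \wedge \cdots \wedge u_{t+1} \wedge u_{m-j}$ (a shorter descending block with a repeated entry on the right). Your argument bypasses this induction entirely by bracketing off the tail $u_m \wedge u_{m-1} \wedge \cdots \wedge u_t \wedge u_m$ and killing it directly with Lemma \ref{vanishinglemma}~(ii). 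Both arguments rest on the same background fact --- that a vanishing finite $q$-wedge stays zero after wedging further factors on either side --- and the paper itself uses this freely for part (i); you simply apply it symmetrically for (ii), which gives a shorter and more transparent proof in perfect parallel with the first part.
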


\begin{proof}
The first assertion immediately follows from Lemma \ref{vanishinglemma} (i). 
We prove (ii) by induction on $m-t$.
If $m=t$, then the assertion follows from Lemma \ref{vanishinglemma} (ii). 

Let $m-t >0$. 
From the identity (\ref{eq.21}), we know that there exist $b_{0}(q), \cdots , b_{m-t}(q)$ such that 

$$ u_{t} \wedge u_{m} = \sum_{j=0}^{m-t} b_{j}(q) \, u_{m-j} \wedge u_{t+j} .$$
Then, 

\begin{align*}
u_{k} \wedge \cdots \wedge u_{t+1} \wedge u_{t} \wedge u_{m} 
&= \sum_{j=0}^{m-t} b_{j}(q) \, u_{k} \wedge \cdots \wedge u_{t+1} \wedge u_{m-j} \wedge u_{t+j} 
\end{align*}
Here, by the induction hypothesis, 
$ u_{k} \wedge \cdots \wedge u_{t+1} \wedge u_{m-j} =0$ for all  $ 0 \le j \le m-t$.
Therefore $u_{k} \wedge \cdots \wedge u_{t+1} \wedge u_{t} \wedge u_{m} = 0$.
\end{proof}

The next corollary follows from the above corollary and Corollary \ref{s-rules-cor}. 

\begin{corollary}
If $k \ge m \ge t$ and $1 \le j \le \ell$ , then 

$(i)$. $u_{m}^{(j)} \wedge u_{k}^{(j)} \wedge u_{k-1}^{(j)} \wedge \cdots \wedge u_{t}^{(j)} = 0$, 

$(ii)$. $u_{k}^{(j)} \wedge u_{k-1}^{(j)} \wedge \cdots \wedge u_{t}^{(j)} \wedge u_{m}^{(j)}= 0$.

\end{corollary}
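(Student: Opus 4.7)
The plan is to reduce the level-$\ell$ statement to the level-$1$ statement, which is exactly Corollary \ref{cor.5}. The key structural observation is that every wedge factor appearing in both (i) and (ii) carries the same superscript $d = j$, so everything lives inside a single ``sector'' of the abacus.

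First I would verify that this single sector is closed under straightening. Inspecting Corollary \ref{s-rules-cor}: when we apply (\ref{eq.22}) to an adjacent pair $u_a^{(j)} \wedge u_b^{(j)}$ (both with $d_1 = d_2 = j$), the correction terms on the right-hand side also carry superscripts $(d_1, d_2) = (j, j)$. Moreover, as noted in the remark following Corollary \ref{s-rules-cor}, the coefficients in (\ref{eq.22}) depend only on the inequality relationship between $d_1$ and $d_2$ (and analogously on that between $c_1$ and $c_2$) and are independent of $\ell$. In particular, specializing (\ref{eq.22}) to $d_1 = d_2 = j$ produces literally the same scalar identity as the level-$1$ straightening rule (\ref{eq.21}) with $\ell = 1$, once we drop all superscripts. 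So the map $u_a^{(j)} \mapsto u_a$ sets up a bijection, compatible with straightening, between expressions in the $j$-th sector (at level $\ell$) and level-$1$ wedge expressions.

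Under this correspondence, the wedge $u_m^{(j)} \wedge u_k^{(j)} \wedge u_{k-1}^{(j)} \wedge \cdots \wedge u_t^{(j)}$ at level $\ell$ matches $u_m \wedge u_k \wedge u_{k-1} \wedge \cdots \wedge u_t$ at level $1$, which vanishes by Corollary \ref{cor.5}(i); likewise for (ii). Hence both (i) and (ii) are immediate.

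There is really no substantive obstacle here: the only thing to check is that the sector-restricted straightening at level $\ell$ genuinely coincides with the level-$1$ rule, and this is transparent from (\ref{eq.22}) together with the observation that $\delta_{d_1 = d_2} = 1$ holds in both settings. If instead one preferred to avoid invoking this ``sectorial identification'' as a meta-statement, one could simply rerun the induction on $m - t$ that proved Corollary \ref{cor.5}: the base case $m = t$ needs the analogue of Lemma \ref{vanishinglemma} inside the $j$-th sector, which again is proved by identifying the sector with level $1$, and the inductive step uses (\ref{eq.22}) in place of (\ref{eq.21}) verbatim.
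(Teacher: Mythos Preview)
Your proposal is correct and matches the paper's own argument: the paper simply says the corollary follows from Corollary~\ref{cor.5} together with Corollary~\ref{s-rules-cor}, which is precisely your reduction via the sectorial identification $u_a^{(j)} \leftrightarrow u_a$. You have just spelled out explicitly what the paper leaves implicit in that one-line justification.
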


\begin{definition}
Let

\begin{align*}
u &= u_{k_{1}}^{(d_{1})} \wedge u_{k_{2}}^{(d_{2})} \wedge \cdots \wedge u_{k_{r}}^{(d_{r})} \,\,\,\, , \,\,\,\,
k_{a} = c_{a} - n m_{a} \,\,\,\,,\,\,\, (a = 1, 2, \cdots , r)  \,\,\,\,  \text{ and } \\
v &= u_{g_{1}}^{(d_{1}')} \wedge u_{g_{2}}^{(d_{2}')} \wedge \cdots \wedge u_{g_{t}}^{(d_{t}')} \,\,\,\, , \,\,\,\,
g_{b} = c_{b}' - n m_{b}' \,\,\,\,,\,\,\, (b = 1, 2, \cdots , t).
\end{align*}
and suppose that $d_{a} \not= d_{b}'$ for all $a \in \{ 1, \cdots , r \}$ and $b \in \{ 1, \cdots ,t \}$. 
Then we define $\xi (u , v )$ as 

\begin{align}
\xi (u , v ) &= 
  \# \{ (a,b) \, | \, c_{a} = c_{b}'  \,\, , \,\, u_{k_{a}}^{(d_{a})} < u_{g_{b}}^{(d_{b}')}  \} .
\end{align}
\label{def-xi}
\end{definition}

\begin{lemma}[\cite{U}]
Let $a \in \mathbb{Z}$, $t \in \mathbb{Z}_{\ge 0}$, $1 \le i \le \ell$, and $1 \le j \le \ell$.

$(i)$. 
Let $u_{k}^{(j)}$ be the maximal element such that $u_{k}^{(j)} < u_{a}^{(i)}$.
Let $u_{[k,k-t]}^{(j)} = u_{k}^{(j)} \wedge u_{k-1}^{(j)} \wedge \cdots \wedge u_{k-t}^{(j)}$.
Then, 

\begin{equation*}
u_{a}^{(i)} \wedge u_{[k,k-t]}^{(j)}
=q^{- \xi (u_{[k,k-t]}^{(j)} , u_{a}^{(i)})} u_{[k,k-t]}^{(j)} \wedge u_{a}^{(i)}.
\end{equation*}

$(ii)$. 
Let $u_{g}^{(j)}$ be the minimal element such that $u_{g}^{(j)} > u_{a}^{(i)}$.
Let $u_{[g+t,g]}^{(j)} = u_{g+t}^{(j)} \wedge u_{g+t-1}^{(j)} \wedge \cdots \wedge u_{g}^{(j)}$.
Then, 

\begin{equation*}
u_{a}^{(i)} \wedge u_{[g+t,g]}^{(j)}
=q^{\xi (u_{a}^{(i)} , u_{[g+t,g]}^{(j)})} u_{[g+t,t]}^{(j)} \wedge u_{a}^{(i)}.
\end{equation*}

\label{lem3}
\end{lemma}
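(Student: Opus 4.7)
I would induct on $t$ in both parts, using the straightening rule \eqref{eq.22} of Corollary \ref{s-rules-cor} to commute $u_a^{(i)}$ past one level-$j$ factor at a time. Throughout I assume $i \ne j$, the only case in which $\xi$ is defined (Definition \ref{def-xi}); under this assumption the prefactor $(-q^{-1})^{\delta_{d_1 = d_2}}$ in \eqref{eq.22} is $1$, and the main coefficient of each swap reduces to $q^{\alpha}$, which equals $q^{-1}$ or $q^{+1}$ precisely when the two swapped elements share a $c$-part. Additivity of $\xi$ over the factors of the block then accumulates to the claimed exponent $q^{\pm \xi}$, leaving only the vanishing of the correction terms to verify at each inductive step.

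For part (i), the base case $t = 0$ is immediate: any correction from \eqref{eq.22} applied to the swap $u_a^{(i)} \wedge u_k^{(j)}$ would insert a level-$j$ element with original integer strictly between those of $u_k^{(j)}$ and $u_a^{(i)}$, and maximality of $u_k^{(j)}$ forbids any such element, so the summation range in \eqref{eq.22} is empty. For the inductive step, factor $u_a^{(i)} \wedge u_{[k, k-t-1]}^{(j)} = u_a^{(i)} \wedge u_{[k, k-t]}^{(j)} \wedge u_{k-t-1}^{(j)}$, apply the induction hypothesis, and swap $u_a^{(i)}$ with $u_{k-t-1}^{(j)}$. Each correction now takes the form $u_{[k, k-t]}^{(j)} \wedge u^{(j)}_p \wedge u^{(i)}_{\text{new}}$ with $p \in \{k-t, \ldots, k\}$ (forced by maximality of $u_k^{(j)}$ together with the between-ness of \eqref{eq.22}); the level-$j$ prefix $u_k^{(j)} \wedge u_{k-1}^{(j)} \wedge \cdots \wedge u_{k-t}^{(j)} \wedge u^{(j)}_p$ then vanishes by the level-$j$ analog of Corollary \ref{cor.5}(ii), killing the correction.

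Part (ii) is proved after part (i), which is thereby available at all sizes. The inductive scheme is analogous, but the new factor is added on top: $u_a^{(i)} \wedge u_{[g+t+1, g]}^{(j)} = u_a^{(i)} \wedge u^{(j)}_{g+t+1} \wedge u_{[g+t, g]}^{(j)}$, performing the swap first and applying the induction hypothesis afterwards. The main obstacle is that each correction here has the form $u^{(j)}_p \wedge u^{(i)}_{\text{new}} \wedge u_{[g+t, g]}^{(j)}$ with $p \in \{g, \ldots, g+t\}$ (by minimality of $u_g^{(j)}$), and the level-$i$ factor $u^{(i)}_{\text{new}}$ sits \emph{inside} the level-$j$ block, preventing any direct appeal to Corollary \ref{cor.5}(i). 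To close this case, I would commute $u^{(i)}_{\text{new}}$ through the block by locating the index $r$ with $u^{(j)}_{g+r} < u^{(i)}_{\text{new}} < u^{(j)}_{g+r+1}$ and splitting $u_{[g+t, g]}^{(j)}$ into upper and lower pieces at position $r$, then applying part (i) to the lower piece and the (ii)-induction hypothesis to the upper piece; the genuine subtlety is the degenerate case in which $u^{(i)}_{\text{new}}$ lies below the entire block (so the upper piece has the full size $t+1$), which I would handle by first peeling off the top factor $u^{(j)}_{g+t}$, swapping it past $u^{(i)}_{\text{new}}$, and iterating, so that every transport past the block is ultimately reduced to an induction hypothesis at a strictly smaller block size. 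After this transport the correction becomes $u^{(j)}_p \wedge u_{[g+t, g]}^{(j)} \wedge u^{(i)}_{\text{new}}$, whose level-$j$ prefix vanishes by the level-$j$ analog of Corollary \ref{cor.5}(i).
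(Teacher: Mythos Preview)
Your proof of part (i) is the paper's proof verbatim; the paper only writes out (i) and declares (ii) analogous, so for (ii) there is nothing to compare against except your own argument.

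Your scheme for (ii) is correct, but you have manufactured a difficulty that is not actually present. In your indexing you are proving (ii) for the block $u_{[g+t+1,g]}^{(j)}$ assuming it for all strictly smaller blocks, in particular for $u_{[g+t,g]}^{(j)}$. In the ``degenerate'' case $u^{(i)}_{\text{new}} < u_g^{(j)}$ you still have $u^{(i)}_{\text{new}} > u_a^{(i)} > u_{g-1}^{(j)}$, so $u_g^{(j)}$ remains the minimal level-$j$ element above $u^{(i)}_{\text{new}}$. Hence the induction hypothesis for (ii), applied with $u^{(i)}_{\text{new}}$ in the role of $u_a^{(i)}$ and the very same $g$, already gives
\[
u^{(i)}_{\text{new}} \wedge u_{[g+t,g]}^{(j)} = q^{\xi(u^{(i)}_{\text{new}},\,u_{[g+t,g]}^{(j)})}\, u_{[g+t,g]}^{(j)} \wedge u^{(i)}_{\text{new}}
\]
in one stroke, after which the correction $u_p^{(j)} \wedge u_{[g+t,g]}^{(j)} \wedge u^{(i)}_{\text{new}}$ dies by the level-$j$ analogue of Corollary~\ref{cor.5}(i). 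No peeling-and-iterating is needed. (Your proposed workaround would also terminate and give the right answer, it is simply redundant.)
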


In the abacus presentation, $u_{a}^{(i)} , u_{[k,k-t]}^{(j)}$ and $u_{[g+t,g]}^{(j)}$ look as follows.

$$\begin{array}{cccc|crccc}
\mathrm{(i)} & \multicolumn{3}{c}{ d=i } & \multicolumn{4}{c}{ d=j } \\ \cline{8-9}
& & & &  &  & & \multicolumn{1}{|c}{ \small \centercircle{k-t} \normalsize } & \multicolumn{1}{c|}{} \\ \cline{6-7}
& & & &  & \multicolumn{1}{|c}{ \hspace{1em} \multirow{3}{*}{ $\xi  \, \left\{  \begin{array}{c} \bullet \\ \vdots \\ \bullet \end{array}   \right.$ } } & & & \multicolumn{1}{c|}{} \\ 
& & & &  & \multicolumn{1}{|c}{} & & & \multicolumn{1}{c|}{} \\
& & & &  & \multicolumn{1}{|c}{} & & \small \centercircle{k-1} \normalsize & \multicolumn{1}{c|}{ \centercircle{k} } \\  \cline{6-9}
& \centercircle{a} & &  &  &  | \hspace{1em}  & & \\
& & & &  & c \equiv a & & &  
\end{array} 
\hspace{1em} , \hspace{1em}
\begin{array}{cccc|ccrcc}
\mathrm{(ii)} & \multicolumn{3}{c}{ d=i } & \multicolumn{4}{c}{ d=j } \\ 
& & & &  & & & & \\ \cline{6-9}
& \centercircle{a} & &  & &  \multicolumn{1}{|c}{ \centercircle{g} } &  \tiny \centercircle{g+1} \normalsize 
  \multirow{3}{*}{ $\xi  \, \left\{  \begin{array}{c} \bullet \\ \vdots \\ \bullet \end{array}   \right.$ }  
  & & \multicolumn{1}{c|}{\hspace{1em}} \\  
& & & &  & \multicolumn{1}{|c}{} & & & \multicolumn{1}{c|}{} \\ \cline{9-9}
& & & &  & \multicolumn{1}{|c}{} & & \multicolumn{1}{c|}{ \tiny \centercircle{g+t} \normalsize } &  \\ \cline{6-8}
& & & &  & & | \hspace{1em} & & \\ 
& & & &  & & c \equiv a & & 
\end{array}
$$
where the boxed region means that all positions are occupied by beads. 

\begin{proof}

We only show (i) by induction on $t$. 
If $t=0$, then the assertion follows from the identity (\ref{eq.22}).

Let $t \ge 1$.
Then, from the identity (\ref{eq.22}), we have

\begin{equation}
u_{a}^{(i)} \wedge u_{k-t}^{(j)} = q^{\alpha} \, u_{k-t}^{(j)} \wedge u_{a}^{(i)} + 
\sum_{ m=1}^{t} b_{m}(q) \, u_{k-t+m}^{(j)} \wedge u_{a-m}^{(i)} , 
\label{eq.28}
\end{equation}
where 
$$\alpha =
\begin{cases}
-1 & \text{ if } \,\, a \equiv k-t \text{ mod } n  \\
0 & \text{ otherwise }
\end{cases}$$ 
(see Remark (ii) after Proposition \ref{s.rule}).

Put 
$\xi = \xi (u_{[k,k-t]}^{(j)} , u_{a}^{(i)} )$ and 
$\xi' = \xi ( u_{k}^{(j)} \wedge u_{k-1}^{(j)} \wedge \cdots \wedge u_{k-t+1}^{(j)}, u_{a}^{(i)} )$. 
Then $\xi = \xi' + \alpha$, and 

\begin{align*}
u_{a}^{(i)} \wedge u_{[k,k-t]}^{(j)} 
&= u_{a}^{(i)} \wedge u_{k}^{(j)} \wedge u_{k-1}^{(j)} \wedge \cdots \wedge u_{k-t}^{(j)} \\
&=q^{\xi'} \, u_{k}^{(j)} \wedge u_{k-1}^{(j)} \wedge \cdots \wedge u_{k-t+1}^{(j)} \wedge u_{a}^{(i)} \wedge u_{k-t}^{(j)} 
\hspace{3em} \text{ (By the induction hypothesis) } \\
&=q^{\xi' + \alpha} \, u_{k}^{(j)} \wedge \cdots \wedge u_{k-t+1}^{(j)} \wedge u_{k-t}^{(j)} \wedge u_{a}^{(i)} \\
&\hspace{3em} +\sum_{m=1}^{t} q^{\xi'} b_{m}(q) \,  
u_{k}^{(j)} \wedge \cdots \wedge u_{k-t+1}^{(j)} \wedge u_{k-t+m}^{(j)} \wedge u_{a-m}^{(i)} 
\hspace{1em} \text{ (By (\ref{eq.28})) } \\
&= q^{\xi} \, u_{[k,k-t]}^{(j)} \wedge u_{a}^{(i)} 
\hspace{3em} \text{ (By Corollary \ref{cor.5}) } \\
\end{align*}
\end{proof}

\begin{definition}
Let $1 \le j \le \ell$ and $\lambda$ be a partition. 
We define 
\begin{equation*}
	\lambda^{[j]} = u_{s_{j} + \lambda_{1}}^{(j)} \wedge u_{s_{j} + \lambda_{2} -1}^{(j)} 
	\wedge u_{s_{j} + \lambda_{3} -2}^{(j)} \wedge \cdots .
\end{equation*}
In particular, 
\begin{equation*}
	\emptyset^{[j]} = u_{s_{j}}^{(j)} \wedge u_{s_{j}-1}^{(j)} \wedge u_{s_{j}-2}^{(j)} \wedge \cdots .
\end{equation*}
\label{j-partition}
\end{definition}

\begin{corollary}
Let $1 \le j \le \ell$, $r>0$, $t>0$, and put 

\begin{align*}
\emptyset^{[j]} &= u_{s_{j}}^{(j)} \wedge u_{s_{j}-1}^{(j)} \wedge \cdots \wedge u_{s_{j}-r}^{(j)} \,\,\,\, \text{ and } \,\,\,\,
u = u_{g_{1}}^{(d_{1})} \wedge u_{g_{2}}^{(d_{2})} \wedge \cdots \wedge u_{g_{t}}^{(d_{t})} \,\,\,\,.
\end{align*}

For each $b \ge 1$, let $u_{h_{b}}^{(j)}$ be the minimal element such that $u_{h}^{(j)} > u_{g_{b}}^{(d_{b})}$. 
If $d_{b} \not= j$ and $s_{j} \ge h_{b} \ge s_{j}-r $ for all $b=1,2, \cdots , t$, then 

\begin{equation*}
u \wedge \emptyset^{[j]} 
= q^{\xi ( u , \emptyset^{[j]}) - \xi (\emptyset^{[j]} , u) } \, 
\emptyset^{[j]} \wedge u .
\end{equation*}
\label{cor4}
\end{corollary}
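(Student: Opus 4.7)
The plan is to proceed by induction on the length $t$ of the $q$-wedge $u$. The base case $t=1$ will be handled by splitting $\emptyset^{[j]}$ into a top block above $u_{g_1}^{(d_1)}$ and a bottom block below $u_{g_1}^{(d_1)}$, then applying Lemma \ref{lem3}(ii) and Lemma \ref{lem3}(i) respectively. The inductive step will peel off the first factor of $u$ and use additivity of $\xi$.

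For the base case, write $\emptyset^{[j]} = A \wedge B$, where $A = u_{s_{j}}^{(j)} \wedge u_{s_{j}-1}^{(j)} \wedge \cdots \wedge u_{h_{1}}^{(j)}$ and $B = u_{h_{1}-1}^{(j)} \wedge \cdots \wedge u_{s_{j}-r}^{(j)}$. The hypothesis $s_{j} \ge h_{1} \ge s_{j}-r$ guarantees that this split lies entirely within $\emptyset^{[j]}$; the hypothesis $d_{1} \ne j$ rules out equality of the underlying integers, so every element of $A$ is strictly greater than $u_{g_{1}}^{(d_{1})}$ and every element of $B$ is strictly less. By the definition of $h_{1}$, the element $u_{h_{1}}^{(j)}$ is the minimal $j$-element exceeding $u_{g_{1}}^{(d_{1})}$, so Lemma \ref{lem3}(ii) gives $u_{g_{1}}^{(d_{1})} \wedge A = q^{\xi(u_{g_{1}}^{(d_{1})}, A)} \, A \wedge u_{g_{1}}^{(d_{1})}$. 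Dually, $u_{h_{1}-1}^{(j)}$ is the maximal $j$-element below $u_{g_{1}}^{(d_{1})}$, so Lemma \ref{lem3}(i) gives $u_{g_{1}}^{(d_{1})} \wedge B = q^{-\xi(B, u_{g_{1}}^{(d_{1})})} \, B \wedge u_{g_{1}}^{(d_{1})}$. Since all elements of $A$ lie above $u_{g_{1}}^{(d_{1})}$ (so $\xi(A, u_{g_{1}}^{(d_{1})}) = 0$) and all elements of $B$ lie below (so $\xi(u_{g_{1}}^{(d_{1})}, B) = 0$), we have $\xi(u_{g_{1}}^{(d_{1})}, \emptyset^{[j]}) = \xi(u_{g_{1}}^{(d_{1})}, A)$ and $\xi(\emptyset^{[j]}, u_{g_{1}}^{(d_{1})}) = \xi(B, u_{g_{1}}^{(d_{1})})$, which settles the base case.

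For the inductive step, write $u = u_{g_{1}}^{(d_{1})} \wedge u''$ with $u'' = u_{g_{2}}^{(d_{2})} \wedge \cdots \wedge u_{g_{t}}^{(d_{t})}$. The hypotheses $d_{b} \ne j$ and $s_{j} \ge h_{b} \ge s_{j}-r$ still hold for $b = 2, \ldots, t$, so the induction hypothesis yields $u'' \wedge \emptyset^{[j]} = q^{\xi(u'', \emptyset^{[j]}) - \xi(\emptyset^{[j]}, u'')} \, \emptyset^{[j]} \wedge u''$. Left-multiplying by $u_{g_{1}}^{(d_{1})}$, applying the base case to $u_{g_{1}}^{(d_{1})} \wedge \emptyset^{[j]}$, and using the additivity $\xi(u_{g_{1}}^{(d_{1})} \wedge u'', v) = \xi(u_{g_{1}}^{(d_{1})}, v) + \xi(u'', v)$ (and the analogous identity in the second slot), both of which are immediate from Definition \ref{def-xi}, gives the desired formula.

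The main technical obstacle is the bookkeeping of the splitting and the exponents: one must verify that the condition $s_{j} \ge h_{b} \ge s_{j}-r$ is exactly what is needed to place the split point $h_{b}$ within the range of $\emptyset^{[j]}$ so that the top block feeds into Lemma \ref{lem3}(ii) and the bottom block into Lemma \ref{lem3}(i). The edge cases $h_{b} = s_{j}$ (so $A$ reduces to the single element $u_{s_{j}}^{(j)}$) and $h_{b} = s_{j}-r$ (so $B$ is empty and the Lemma \ref{lem3}(i) step is vacuous with zero exponent) require no special treatment beyond noting they are consistent with the convention on empty wedges.
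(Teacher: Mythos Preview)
Your proof is correct and is exactly the argument the paper has in mind: the corollary is stated in the paper without proof, as an immediate consequence of Lemma~\ref{lem3}, and your splitting of $\emptyset^{[j]}$ at $u_{h_b}^{(j)}$ followed by an application of Lemma~\ref{lem3}(ii) to the top block and Lemma~\ref{lem3}(i) to the bottom block, combined with induction on $t$ and additivity of $\xi$, is the intended derivation.
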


%
%

\section{Proof of Theorem\ref{thmA}}

We only prove Theorem \ref{thmA} in the case of $\varepsilon = -$. 
The proof in the case of $\varepsilon = +$ is similar. 
Through this section we fix $j$ $(1 \le j \le \ell)$. 

\subsection{Preliminary for the proof}

Fix an sufficiently large integer $r$ so that for every ordered $q$-wedge product appearing in our argument, 
	all of the components after $r$-th factor are consecutive.
We are able to truncate $q$-wedge products at the first $r$ parts.
See \cite{U} for detail. 
Then $| \boldsymbol{\lambda} ; \boldsymbol{s} \rangle$ can be identified with $v_{\boldsymbol{\lambda}}$ defined by 
\begin{equation}
	| \boldsymbol{\lambda} ; \boldsymbol{s} \rangle = 
	v_{\boldsymbol{\lambda}} \wedge u_{s-r} \wedge u_{s-r-1} \wedge \cdots .
\label{def-v}
\end{equation}

First, we extend the definition of "sufficiently large" on the finite $q$-wedge products and introduce some notations. 

\begin{definition}
Let $u_{\boldsymbol{k}} = u_{k_{1}} \wedge u_{k_{2}} \wedge \cdots \wedge u_{k_{r}}$ 
	be an ordered $q$-wedge product and write $k_{a} = c_{a} + n (d_{a} -1) - n \ell m_{a}$ for $a=1,2,\cdots,r$ 
	as in (\ref{k->cdm}). 
Then define $\check{u_{\boldsymbol{k}}}$ to be the $q$-wedge obtained from $u_{\boldsymbol{k}}$ 
	by removing all factors $u^{(d_{a})}$ with $d_{a} = j$.
\label{checkU}
\end{definition}

\begin{lemma}
Suppose that $s_{j}$ is sufficiently large for $| \boldsymbol{\lambda} ; \boldsymbol{s} \rangle$ and 
	$\Delta^{-}_{\boldsymbol{\lambda} , \boldsymbol{\mu}}(q) \ne 0$. 
Let $v_{\boldsymbol{\lambda}} = | \boldsymbol{\lambda} ; \boldsymbol{s} \rangle$,
	$v_{\boldsymbol{\mu}} = | \boldsymbol{\mu} ; \boldsymbol{s} \rangle$ and $r$ as above.
Then, 

\begin{equation*}
\xi ( \emptyset^{[j]} , \check{v_{\boldsymbol{\lambda} }} )  = \xi ( \emptyset^{[j]} , \check{v_{\boldsymbol{\mu} }} ) .
\end{equation*}
\label{lem6}
\end{lemma}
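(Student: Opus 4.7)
The plan is to derive an explicit closed form for $\xi(\emptyset^{[j]},\check{v_{\boldsymbol{\lambda}}})$ in terms of a handful of bead-statistics of $\check{v_{\boldsymbol{\lambda}}}$, and then verify that each such statistic is invariant when $\boldsymbol{\lambda}$ is replaced by $\boldsymbol{\mu}$ under the hypotheses.

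First, from $\Delta^{-}_{\boldsymbol{\lambda},\boldsymbol{\mu}}(q)\ne 0$ together with the unitriangularity of $\Delta^{-}(q)$ with respect to $\ge$, I would deduce $|\boldsymbol{\lambda};\boldsymbol{s}\rangle\ge|\boldsymbol{\mu};\boldsymbol{s}\rangle$ (so $|\boldsymbol{\lambda}|=|\boldsymbol{\mu}|$), and then Lemma \ref{lem.3.2} forces $\lambda^{(j)}=\mu^{(j)}=\emptyset$ and that $s_{j}$ is sufficiently large for $|\boldsymbol{\mu};\boldsymbol{s}\rangle$ as well, so the sector-$j$ pieces of both $v_{\boldsymbol{\lambda}}$ and $v_{\boldsymbol{\mu}}$ coincide with the common $\emptyset^{[j]}$. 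Since the canonical basis elements $G^{-}(\boldsymbol{\lambda};\boldsymbol{s})$ are $\hat{\mathfrak{sl}}_{n}$-weight vectors, the nonvanishing of $\Delta^{-}_{\boldsymbol{\lambda},\boldsymbol{\mu}}(q)$ forces $|\boldsymbol{\lambda};\boldsymbol{s}\rangle$ and $|\boldsymbol{\mu};\boldsymbol{s}\rangle$ into a common weight space, so the residue-content multiplicities $m_{i}^{\lambda}=m_{i}^{\mu}$ for every $i\in\mathbb{Z}/n\mathbb{Z}$.

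I would then unfold $\xi$ column by column. Writing each factor of $\check{v_{\boldsymbol{\lambda}}}$ as $u_{c_{b}-nm_{b}}^{(d_{b})}$ with big-abacus value $k_{b}=c_{b}+n(d_{b}-1)-n\ell m_{b}$, the comparison $u_{k_{a}}^{(j)}<u_{g_{b}}^{(d_{b})}$ at a common column translates, via the identity $k=c+n(d-1)-n\ell m$, to $m^{(j)}\ge m_{b}+[j>d_{b}]$. Since $s_{j}$ is sufficiently large, the smallest $m^{(j)}$ appearing in $\emptyset^{[j]}$ at column $c$ is at most $m_{b}$ for every $\check{v_{\boldsymbol{\lambda}}}$-bead $b$ at column $c$, so the number of $\emptyset^{[j]}$-beads at column $c$ lying below $b$ equals $M_{c,j}-m_{b}-[j>d_{b}]+1$, where $M_{c,j}$ is the largest $m$-value of an $\emptyset^{[j]}$-bead at column $c$ (depending only on $s_{j}$, $j$, and the truncation $r$). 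Summing over $b$ yields
\begin{equation*}
\xi(\emptyset^{[j]},\check{v_{\boldsymbol{\lambda}}})=\sum_{c=1}^{n}(M_{c,j}+1)\,N_{c}^{\lambda}-\sum_{b}m_{b}-\sum_{d<j}N^{(d,\lambda)},
\end{equation*}
where $N_{c}^{\lambda}$ (resp.\ $N^{(d,\lambda)}$) is the number of factors of $\check{v_{\boldsymbol{\lambda}}}$ at column $c$ (resp.\ in sector $d$).

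The remaining step is to verify each term is invariant under $\boldsymbol{\lambda}\mapsto\boldsymbol{\mu}$. The equality $N_{c}^{\lambda}=N_{c}^{\mu}$ holds because adding an $i$-node shifts a bead from one specific column to the adjacent one, so the net change to $N_{c}$ is a function of the multiplicities $m_{i}^{\lambda}$, which are invariant. The equality $N^{(d,\lambda)}=N^{(d,\emptyset)}$ holds because $r$ is chosen below all bead-modifications induced by $\lambda^{(d)}$, so no bead crosses the cutoff $k>s-r$ and bead-moves stay within their original sector, making the per-sector count independent of $\boldsymbol{\lambda}$. For $\sum_{b}m_{b}$, I would use $n\ell\,m_{b}=c_{b}+n(d_{b}-1)-k_{b}$ to reduce it to a linear combination of $\sum_{c}c\,N_{c}^{\lambda}$, $\sum_{d}(d-1)N^{(d,\lambda)}$, and $\sum_{b}k_{b}=\sum_{b\in v_{\boldsymbol{\lambda}}}k_{b}-\sum_{a\in\emptyset^{[j]}}k_{a}$; the first two are already invariants, the latter piece is fixed, and the first piece changes relative to the $\boldsymbol{\lambda}=\emptyset$ baseline by $|\boldsymbol{\lambda}|+n(\ell-1)m_{0}^{\lambda}$, since each added cell shifts exactly one big-abacus $k$ by $+1$ unless its content residue is $0$ (the unique column-wrap case $n\to 1$), in which case the shift is $+1+n(\ell-1)$. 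The main obstacle is this last computation: tracking the contribution of each cell addition to $\sum k$ and isolating the content-$0$ case. Once that dichotomy is pinned down, the total change is captured cleanly by $|\boldsymbol{\lambda}|+n(\ell-1)m_{0}^{\lambda}$, which is a weight invariant, and the identity $\xi(\emptyset^{[j]},\check{v_{\boldsymbol{\lambda}}})=\xi(\emptyset^{[j]},\check{v_{\boldsymbol{\mu}}})$ follows.
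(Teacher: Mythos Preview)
Your argument is correct but takes a different route from the paper's. The paper's proof is \emph{local}: it shows that a single straightening step preserves $\xi(\emptyset^{[j]},\cdot)$. Concretely, if $u_{k_1'}\wedge u_{k_2'}$ is any term in the straightening of $u_{k_2}\wedge u_{k_1}$ (with $k_1,k_2$ outside sector $j$), then by the structure of the straightening rule the pair $(k_1',k_2')$ swaps $c$-parts with $(k_1,k_2)$ and lies strictly between them on the abacus; combined with the ``sufficiently large'' hypothesis (which forces the $\emptyset^{[j]}$-block to fill the region above all non-$j$ beads), a direct abacus count gives $\xi_1+\xi_2=\xi_1'+\xi_2'$. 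Since $v_{\boldsymbol{\mu}}$ is reached from $\overline{v_{\boldsymbol{\lambda}}}$ by iterated straightening, the conclusion follows.

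Your approach is \emph{global}: you express $\xi(\emptyset^{[j]},\check{v_{\boldsymbol{\lambda}}})$ as a linear combination of the column-counts $N_c^{\lambda}$, the sector-counts $N^{(d,\lambda)}$, and $\sum_b m_b$, and then argue each is determined by the $\widehat{\mathfrak{sl}}_n$-weight of $|\boldsymbol{\lambda};\boldsymbol{s}\rangle$ (plus data depending only on $\boldsymbol{s}$ and $r$). This is sound; the weight-vector property of $G^{-}$ you invoke is equivalent to the observation (implicit in the paper) that straightening preserves the multiset of $c$-values, hence the bar involution preserves column-counts. Two minor caveats worth recording: (i) the identity $M_{c_b,j}-m_b-[j>d_b]+1=$ (actual count) can fail for a few beads $b$ at the truncation boundary, but those beads lie in the common tail of $v_{\boldsymbol{\lambda}}$ and $v_{\boldsymbol{\mu}}$, so the discrepancy cancels; (ii) your reduction of $\sum_b m_b$ to $\sum_b k_b$ and then to $|\boldsymbol{\lambda}|+n(\ell-1)m_0^{\lambda}$ is correct, and the dichotomy you flag as the ``main obstacle'' is exactly the case split $c<n$ versus $c=n$ in the map $p\mapsto p+1$ on level-1 positions. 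The paper's argument is shorter and avoids invoking the module structure; yours yields an explicit closed form for $\xi$, which is extra information.
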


\begin{proof}

Let $u_{k_{1}} \wedge u_{k_{2}}$ be a $q$-wedge product. 
Suppose that $s_{j}$ is sufficiently large for $u_{k_{1}} \wedge u_{k_{2}}$. 
Let $u_{k_{1}'} \wedge u_{k_{2}'}$ be a $q$-wedge product which appears 
	in the linear expansion of the straightening of $u_{k_{2}} \wedge u_{k_{1}}$.

Put $\xi = \xi ( \emptyset^{[j]} , u_{k_{1}} \wedge u_{k_{2}} )$, 
$\xi_{1} = \xi ( \emptyset^{[j]} , u_{k_{1}} )$, $\xi_{2} = \xi ( \emptyset^{[j]} , u_{k_{2}} )$, 
$\xi' = \xi ( \emptyset^{[j]} , u_{k_{1}'} \wedge u_{k_{2}'} )$, 
$\xi_{1}' = \xi ( \emptyset^{[j]} , u_{k_{1}'} )$ and $\xi_{2}' = \xi ( \emptyset^{[j]} , u_{k_{2}'} )$ 
(see Definition \ref{j-partition}). 
Note that $\xi = \xi_{1} + \xi_{2}$ and $\xi' = \xi_{1}' + \xi_{2}'$. 
Then, from the abacus presentation below, we obtain $\xi = \xi'$. 
That is, the straightening rule preserves $\xi$ if $s_{j}$ is sufficiently large. 

\begin{equation*}
\begin{array}{ccccccccccccc}
& & & & d=j & & & & & & & d=j & \\ 
\multirow{5}{*}{$ \xi_{1} \begin{cases} \bullet \\ \vdots \\ \vdots \\ \vdots \\ \bullet \end{cases}$} & 
 & \multirow{2}{*}{$ \xi_{2} \small \begin{cases} \bullet \\ \bullet \end{cases}$ \normalsize }
 & \multicolumn{1}{|c}{} & & \multicolumn{1}{c|}{} & 
 & \multirow{3}{*}{$ \xi_{1}' \begin{cases} \bullet \\ \vdots \\ \bullet \end{cases}$} & 
 & \multirow{4}{*}{$ \xi_{2}' \begin{cases} \bullet \\ \vdots \\ \vdots \\ \bullet \end{cases}$} 
 & \multicolumn{1}{|c}{} & &  \multicolumn{1}{c|}{} \\
& & & \multicolumn{1}{|c}{} & &  \multicolumn{1}{c|}{} & &
    & & & \multicolumn{1}{|c}{} & &  \multicolumn{1}{c|}{} \\
& & \centercircle{$k_{2}$} & \multicolumn{1}{|c}{} & &  \multicolumn{1}{c|}{} & &
    & & & \multicolumn{1}{|c}{} & &  \multicolumn{1}{c|}{} \\
& & & \multicolumn{1}{|c}{} & &  \multicolumn{1}{c|}{} & \longrightarrow & 
    \centercircle{$k_{1}'$} & & & \multicolumn{1}{|c}{} & &  \multicolumn{1}{c|}{} \\
& & & \multicolumn{1}{|c}{} & &  \multicolumn{1}{c|}{} & \text{straightening rule} & 
    & & \centercircle{$k_{2}'$} & \multicolumn{1}{|c}{} & &  \multicolumn{1}{c|}{} \\
\centercircle{$k_{1}$} & & & \multicolumn{1}{|c}{} & & \multicolumn{1}{c|}{} & &
    & & & \multicolumn{1}{|c}{} & &  \multicolumn{1}{c|}{} \\
& & & \multicolumn{1}{|c}{} & & \multicolumn{1}{r|}{\centercircle{$s_{j}$}} & &
    & & & \multicolumn{1}{|c}{} & & \multicolumn{1}{r|}{\centercircle{$s_{j}$}} \\ \cline{4-6} \cline{11-13}
\end{array} 
\end{equation*}
where beads are filled in the boxed region. 

If $\Delta^{-}_{\boldsymbol{\lambda} , \boldsymbol{\mu}} (q) \not=0 $, then 
$v_{\boldsymbol{\mu}}$ appears in the linear expansion of the straightening of $\overline{ v_{\boldsymbol{\lambda}} }$.
Therefore, the above argument assures the assertion.
\end{proof}

From Lemma.\ref{lem3}, we have 

\begin{corollary}[see \cite{U}, Lemma 5.19]
If $s_{j}$ is sufficiently large for an ordered $q$-wedge product 
$u_{\boldsymbol{k}} = u_{k_{1}} \wedge u_{k_{2}} \wedge \cdots \wedge u_{k_{r}}$.
Then

\begin{equation*}
u_{\boldsymbol{k}} = q^{- \xi ( \emptyset^{[j]} , \check{u_{\boldsymbol{k}}} )} \,
\emptyset^{[j]} \wedge \check{u_{\boldsymbol{k}}}  .
\end{equation*}
\label{cor7}
\end{corollary}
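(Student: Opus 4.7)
The plan is to prove the identity by induction on the number $t$ of non-$j$-sector factors of $u_{\boldsymbol{k}}$. For $t=0$, sufficient largeness of $s_{j}$ forces $\lambda^{(j)}=\emptyset$, so the $j$-sector beads of $u_{\boldsymbol{k}}$ are the consecutive relabeled positions $s_{j},s_{j}-1,\ldots,s_{j}-r$ and therefore $u_{\boldsymbol{k}}=\emptyset^{[j]}$; both $\check{u_{\boldsymbol{k}}}$ and $\xi$ are trivial and the identity holds.

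For the inductive step, take $u_{a}^{(i)}$ to be the rightmost non-$j$-sector factor of $u_{\boldsymbol{k}}$, and write $u_{\boldsymbol{k}}=A\wedge u_{a}^{(i)}\wedge C$. Because within a fixed sector the original position is strictly monotone in the relabeled position, $C$ is automatically a consecutive relabeled block $u_{k}^{(j)}\wedge u_{k-1}^{(j)}\wedge\cdots\wedge u_{k-t'}^{(j)}$. The heart of the argument is to establish
\begin{equation*}
u_{a}^{(i)}\wedge C\;=\;q^{-\xi(C,\,u_{a}^{(i)})}\,C\wedge u_{a}^{(i)}
\end{equation*}
with no surviving correction terms. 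When the top bead of $C$ happens to be the absolute maximum $j$-sector position below $u_{a}^{(i)}$, this is exactly the content of Lemma~\ref{lem3}(i). Otherwise (a situation that can arise when $i>j$ and the relabeled position of $u_{a}^{(i)}$ equals $s_{j}$), one mimics the induction in the proof of Lemma~\ref{lem3}(i): swap $u_{a}^{(i)}$ past $C$ one bead at a time via Corollary~\ref{s-rules-cor}, and observe that at each step the correction term introduces a $j$-sector bead whose relabeled position either coincides with the bead just moved to the left of $u_{a}^{(i)}$ by the preceding swap, or with a bead still in $C$; both cases are killed by $u_{p}^{(j)}\wedge u_{p}^{(j)}=0$ or by Corollary~\ref{cor.5}. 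Sufficient largeness of $s_{j}$ is exactly what forces the correction positions to lie inside the range spanned by the beads already present.

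Having established the key step, apply the inductive hypothesis to $A\wedge C$, which is an ordered $q$-wedge product with $t-1$ non-$j$-sector factors for which $s_{j}$ remains sufficiently large and whose $\emptyset^{[j]}$ is the same truncated block, giving $A\wedge C=q^{-\xi(\emptyset^{[j]},\,\check{A\wedge C})}\,\emptyset^{[j]}\wedge\check{A\wedge C}$. Since $u_{a}^{(i)}$ was chosen rightmost, $\check{A\wedge C}\wedge u_{a}^{(i)}=\check{u_{\boldsymbol{k}}}$; and since the beads of $\emptyset^{[j]}$ lying below $u_{a}^{(i)}$ in the original ordering are precisely those in $C$ (the rest sitting in $A$, above $u_{a}^{(i)}$), additivity of $\xi$ yields $\xi(\emptyset^{[j]},\check{u_{\boldsymbol{k}}})=\xi(\emptyset^{[j]},\check{A\wedge C})+\xi(C,u_{a}^{(i)})$. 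The two $q$-powers therefore combine to give the claimed identity. The principal obstacle is the cancellation of correction terms in the key step; once that is in place, the rest of the argument is bookkeeping.
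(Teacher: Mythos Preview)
Your argument is correct and follows the same route the paper indicates: iterate Lemma~\ref{lem3}(i) to slide each non-$j$ factor past the adjacent block of $j$-beads, collecting powers of $q$ that sum to $\xi(\emptyset^{[j]},\check{u_{\boldsymbol k}})$. The paper's own proof is just the one-line citation ``From Lemma~\ref{lem3}'' together with the illustrative Example~\ref{ex3}, so you have in fact written out more than the paper does.

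Your observation that Lemma~\ref{lem3}(i) may fail to apply verbatim---when $i>j$ and the top bead of $C$ is $u_{s_j}^{(j)}$ rather than the absolute maximal $j$-position $u_{k^*}^{(j)}$ below $u_a^{(i)}$ (which can have $k^*>s_j$)---is a genuine subtlety the paper does not address explicitly. Your resolution is right in outline; to make it airtight, note that each correction term arising from a swap $u_a^{(i)}\wedge u_{s_j-p}^{(j)}$ has $j$-bead with $c$-part equal to $c_a$ (Remark~(ii)(b) after Proposition~\ref{s.rule}), hence relabeled position congruent to $a\pmod n$ and at most $a\le s_j$. Thus every correction $j$-bead lands among the beads already moved to the left and is killed by Corollary~\ref{cor.5}. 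Your phrase ``when the relabeled position of $u_a^{(i)}$ equals $s_j$'' slightly undersells the scope of the exceptional case (it occurs whenever $i>j$ and $u_a^{(i)}$, $u_{s_j}^{(j)}$ lie in the same abacus row with $c_s<n$), but the cancellation mechanism you describe covers it.
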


\begin{example}
Let $n=2$, $\ell =3$, $\boldsymbol{s} = (0,2,-2)$ and 
$\boldsymbol{\lambda} = ( (1,1) , \emptyset , (3) )$.
Then $s_{2}$ is sufficiently large for $ | \boldsymbol{\lambda} ; \boldsymbol{s} \rangle$.
Take $r=7$, then 

\begin{align*}
u_{\boldsymbol{k}} 
&= u_{5} \wedge u_{4} \wedge u_{3} \wedge u_{1} \wedge u_{-2} \wedge u_{-3} \wedge u_{-4} \wedge u_{-7} \\
&= u_{1}^{(3)} \wedge u_{2}^{(2)} \wedge u_{1}^{(2)} 
\wedge \underline{ u_{1}^{(1)} \wedge u_{0}^{(2)} \wedge u_{-1}^{(2)} } \wedge u_{0}^{(1)} \wedge u_{-3}^{(3)} \\
&= q^{-1} \, \underline{ u_{1}^{(3)} \wedge u_{2}^{(2)} \wedge u_{1}^{(2)} \wedge u_{0}^{(2)} \wedge u_{-1}^{(2)} }
\wedge u_{1}^{(1)} \wedge u_{0}^{(1)} \wedge u_{-3}^{(3)} \\
&= q^{-3} \, u_{2}^{(2)} \wedge u_{1}^{(2)} \wedge u_{0}^{(2)} \wedge u_{-1}^{(2)} \wedge u_{1}^{(3)} 
\wedge u_{1}^{(1)} \wedge u_{0}^{(1)} \wedge u_{-3}^{(3)} \\
&= q^{-3} \, \emptyset^{[2]} \wedge \check{ u_{\boldsymbol{k}} } .
\end{align*}
\label{ex3}
\end{example}

\begin{lemma}
If $s_{j}$ is sufficiently large for an ordered $q$-wedge product 
$u_{\boldsymbol{k}} = u_{k_{1}} \wedge u_{k_{2}} \wedge \cdots \wedge u_{k_{r}}$. 
Then, 

\begin{equation*}
\overline{ u_{\boldsymbol{k}} } = q^{- \xi ( \emptyset^{[j]} , \check{u_{\boldsymbol{k}}} )} \,
\emptyset^{[j]} \wedge \overline{ \check{u_{\boldsymbol{k}}} } .
\end{equation*}
\label{lem8}
\end{lemma}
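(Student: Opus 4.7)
The plan is to apply the bar involution to the identity $u_{\boldsymbol{k}} = q^{-\xi_{0}} \emptyset^{[j]} \wedge \check{u_{\boldsymbol{k}}}$ provided by Corollary \ref{cor7}, where $\xi_{0} = \xi(\emptyset^{[j]}, \check{u_{\boldsymbol{k}}})$. This gives $\overline{u_{\boldsymbol{k}}} = q^{\xi_{0}}\, \overline{\emptyset^{[j]} \wedge \check{u_{\boldsymbol{k}}}}$, so the task reduces to verifying the identity $\overline{\emptyset^{[j]} \wedge \check{u_{\boldsymbol{k}}}} = q^{-2\xi_{0}}\, \emptyset^{[j]} \wedge \overline{\check{u_{\boldsymbol{k}}}}$.

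First I would apply Definition \ref{barinv} directly to the (non-ordered) wedge $\emptyset^{[j]} \wedge \check{u_{\boldsymbol{k}}}$; this is permissible because the formula in Definition \ref{barinv} depends only on the tuple of factors, not on their being in decreasing order. Since every factor of $\emptyset^{[j]}$ has $d$-label $j$ while every factor of $\check{u_{\boldsymbol{k}}}$ has $d$-label different from $j$, the exponent $\kappa$ of the concatenated $d$-sequence splits additively as $\binom{t}{2} + \kappa(\check{d})$, and the exponent $\kappa$ of the concatenated $c$-sequence splits as $\kappa(c^{e}) + \kappa(\check{c}) + \mathrm{cross}(c^{e}, \check{c})$, where $\mathrm{cross}(c^{e}, \check{c})$ counts pairs $(a,b)$ with matching $c$-value, one from $\emptyset^{[j]}$ and one from $\check{u_{\boldsymbol{k}}}$. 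Because the $d$-labels differ, each such pair contributes to exactly one of $\xi(\emptyset^{[j]}, \check{u_{\boldsymbol{k}}})$ or $\xi(\check{u_{\boldsymbol{k}}}, \emptyset^{[j]})$, giving $\mathrm{cross}(c^{e}, \check{c}) = \xi_{0} + \xi(\check{u_{\boldsymbol{k}}}, \emptyset^{[j]})$. Comparing with the bar formulas applied to $\emptyset^{[j]}$ and $\check{u_{\boldsymbol{k}}}$ separately yields
\[
\overline{\emptyset^{[j]} \wedge \check{u_{\boldsymbol{k}}}}
= q^{-\mathrm{cross}(c^{e}, \check{c})}\, \overline{\check{u_{\boldsymbol{k}}}} \wedge \overline{\emptyset^{[j]}},
\]
and a short direct computation (apply Definition \ref{barinv} to $\emptyset^{[j]}$, then straighten the reversed wedge on runner $j$ back to $\emptyset^{[j]}$ using Proposition \ref{s.rule}) shows $\overline{\emptyset^{[j]}} = \emptyset^{[j]}$.

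Next I would move $\emptyset^{[j]}$ back to the left of $\overline{\check{u_{\boldsymbol{k}}}}$ by applying Corollary \ref{cor4} termwise to every ordered wedge $\check{u}'$ appearing in the straightening expansion of $\overline{\check{u_{\boldsymbol{k}}}}$. The hypotheses of Corollary \ref{cor4} are inherited from the sufficient-largeness of $s_{j}$ for $u_{\boldsymbol{k}}$. The key substep is to verify that $\xi(\check{u}', \emptyset^{[j]})$ and $\xi(\emptyset^{[j]}, \check{u}')$ are both independent of $\check{u}'$, equal respectively to $\xi(\check{u_{\boldsymbol{k}}}, \emptyset^{[j]})$ and $\xi_{0}$; this follows by repeating the abacus argument in the proof of Lemma \ref{lem6} for each elementary straightening swap inside $\check{u_{\boldsymbol{k}}}$, which preserves both $\xi$-values because it does not touch the $j$-runner beads.

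Collecting the three phase contributions gives $q^{\xi_{0} - \mathrm{cross}(c^{e}, \check{c}) + \xi(\check{u_{\boldsymbol{k}}}, \emptyset^{[j]}) - \xi_{0}} = q^{-\xi_{0}}$, which produces the desired identity. The main obstacle is the legitimacy of applying the bar formula to the non-ordered wedge $\emptyset^{[j]} \wedge \check{u_{\boldsymbol{k}}}$ together with the bookkeeping of the cross phase; once the additivity of $\kappa$ across the disjoint $d$-sequences is in place, the remainder of the argument is a careful matching of exponents using Corollary \ref{cor7}, Corollary \ref{cor4}, and the invariance argument from the proof of Lemma \ref{lem6}.
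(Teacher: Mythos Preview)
Your argument is correct and follows the same route as the paper's proof: factor $u_{\boldsymbol{k}}$ via Corollary~\ref{cor7}, apply the bar formula to split off the cross phase $q^{-(\xi+\eta)}$, use $\overline{\emptyset^{[j]}}=\emptyset^{[j]}$, and then commute $\emptyset^{[j]}$ back through using Corollary~\ref{cor4}. The paper streamlines your final step by applying Corollary~\ref{cor4} directly to the reversed (non-ordered) wedge $\overline{\check{u_{\boldsymbol{k}}}}$ rather than straightening first; since $\xi$ depends only on the multiset of factors, the invariance-under-straightening argument you borrow from Lemma~\ref{lem6} is unnecessary here.
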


\begin{proof}
Let $\xi = \xi ( \emptyset^{[j]} , \check{u_{\boldsymbol{k}}} )$ and 
$\eta = \xi ( \check{u_{\boldsymbol{k}}} , \emptyset^{[j]} )$. 
By Corollary \ref{cor7}, we have 

\begin{equation*}
u_{\boldsymbol{k}} = q^{- \xi} \, \emptyset^{[j]} \wedge \check{u_{\boldsymbol{k}}}  .
\end{equation*} 
Thus, we have 

\begin{align*}
\overline{ u_{\boldsymbol{k}} } 
&= q^{\xi} \, q^{- \xi -\eta} \, \overline{ \check{u_{\boldsymbol{k}}} } \wedge \overline{ \emptyset^{[j]} } 
\hspace{3em} \text{  (Definition of bar involution (\ref{barinvolution})) } \\
&= q^{ -\eta} \, \overline{ \check{u_{\boldsymbol{k}}} } \wedge \emptyset^{[j]} 
\hspace{3em} \text{  ($\overline{ \emptyset^{[j]} } = \emptyset^{[j]} $) } \\
&= q^{ -\eta} \, q^{\eta - \xi} \, \emptyset^{[j]} \wedge \overline{ \check{u_{\boldsymbol{k}}} } 
\hspace{3em} \text{  (By Corollary \ref{cor4}) } \\
&= q^{ -\xi} \, \emptyset^{[j]} \wedge \overline{ \check{u_{\boldsymbol{k}}} } 
\end{align*}
\end{proof}

\subsection{Proof of Theorem \ref{thmA}}

Let $\check{\Pi}^{\ell}$ be the subset of $\Pi^{\ell}$ whose $j$-th component is the empty Young diagram. i.e. 

\begin{equation}
	\check{\Pi}^{\ell} = \{ \boldsymbol{\lambda} \in \Pi^{\ell} \, | \, \lambda^{(j)} = \emptyset \} . 
\end{equation}

Theorem \ref{thmA} is a direct consequence of the next proposition.

\begin{proposition}
Suppose that $s_{j}$ is sufficiently large for $| \boldsymbol{\lambda} ; \boldsymbol{s} \rangle$. 
Then, 

\begin{equation*}
G^{+}(\boldsymbol{\lambda} ; \boldsymbol{s}) =
\sum_{\boldsymbol{\mu} \in \check{\Pi}^{\ell} } 
\Delta_{\check{ \boldsymbol{\lambda} } , \check{ \boldsymbol{\mu} } ;  \check{ \boldsymbol{s} } }^{+} (q) \,
| \boldsymbol{\mu} ; \boldsymbol{s} \rangle
\hspace{2em} , \hspace{2em}
G^{-}(\boldsymbol{\lambda} ; \boldsymbol{s}) =
\sum_{\boldsymbol{\mu} \in \check{\Pi}^{\ell} } 
\Delta_{\check{ \boldsymbol{\lambda} } , \check{ \boldsymbol{\mu} } ;  \check{ \boldsymbol{s} } }^{-} (q) \,
| \boldsymbol{\mu} ; \boldsymbol{s} \rangle ,
\end{equation*}
where $\check{\boldsymbol{\lambda}}$ (resp. $\check{\boldsymbol{\mu}} , \check{\boldsymbol{s}})$ 
is obtained by omitting the $j$-th component of $\boldsymbol{\lambda}$ 
(resp. $\boldsymbol{\mu} , \boldsymbol{s})$.
\label{prop11}
\end{proposition}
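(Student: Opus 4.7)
The plan is to exhibit a candidate $\widetilde{G} \in \boldsymbol{F}_q[\boldsymbol{s}]$ and verify the two uniqueness conditions for $G^\varepsilon(\boldsymbol{\lambda};\boldsymbol{s})$. After fixing the truncation parameter $r$ as in the preliminary discussion, set $\xi_{\boldsymbol{\mu}} := \xi(\emptyset^{[j]}, \check{v}_{\boldsymbol{\mu}})$ for each $\boldsymbol{\mu} \in \check{\Pi}^\ell$ for which $s_j$ is sufficiently large for $|\boldsymbol{\mu};\boldsymbol{s}\rangle$, and define
\[
\widetilde{G} := q^{-\xi_{\boldsymbol{\lambda}}} \, \emptyset^{[j]} \wedge G^\varepsilon(\check{\boldsymbol{\lambda}};\check{\boldsymbol{s}}).
\]

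First I would expand $\widetilde{G}$ in the standard basis of $\boldsymbol{F}_q[\boldsymbol{s}]$. By Lemma \ref{lem.3.2}(ii), the hypothesis propagates to every $|\boldsymbol{\mu};\boldsymbol{s}\rangle$ with $|\boldsymbol{\lambda};\boldsymbol{s}\rangle \geq |\boldsymbol{\mu};\boldsymbol{s}\rangle$; since the multisets of Definition \ref{order} are blind to an empty $j$-th slot, the level-$\ell$ order restricted to $\check{\Pi}^\ell$ agrees with the level-$(\ell-1)$ order on the corresponding $\check{\boldsymbol{\mu}}$'s, so this dominance also covers every $\check{\boldsymbol{\mu}}$ appearing in $G^\varepsilon(\check{\boldsymbol{\lambda}};\check{\boldsymbol{s}})$. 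Corollary \ref{cor7} then gives $\emptyset^{[j]} \wedge |\check{\boldsymbol{\mu}};\check{\boldsymbol{s}}\rangle = q^{\xi_{\boldsymbol{\mu}}} \, |\boldsymbol{\mu};\boldsymbol{s}\rangle$, so that
\[
\widetilde{G} = \sum_{\boldsymbol{\mu} \in \check{\Pi}^\ell} q^{\xi_{\boldsymbol{\mu}} - \xi_{\boldsymbol{\lambda}}} \, \Delta^\varepsilon_{\check{\boldsymbol{\lambda}},\check{\boldsymbol{\mu}};\check{\boldsymbol{s}}}(q) \, |\boldsymbol{\mu};\boldsymbol{s}\rangle.
\]

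Next I would verify $\overline{\widetilde{G}} = \widetilde{G}$. Applying Lemma \ref{lem8} term-by-term yields
\[
\overline{\emptyset^{[j]} \wedge G^\varepsilon(\check{\boldsymbol{\lambda}};\check{\boldsymbol{s}})} = \sum_{\check{\boldsymbol{\mu}}} q^{-2\xi_{\boldsymbol{\mu}}} \, \overline{\Delta^\varepsilon_{\check{\boldsymbol{\lambda}},\check{\boldsymbol{\mu}};\check{\boldsymbol{s}}}(q)} \, \emptyset^{[j]} \wedge \overline{|\check{\boldsymbol{\mu}};\check{\boldsymbol{s}}\rangle},
\]
with the right-hand bar being the level-$(\ell-1)$ involution. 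Combined with bar-invariance of $G^\varepsilon(\check{\boldsymbol{\lambda}};\check{\boldsymbol{s}})$ in level $\ell-1$, the desired identity $\overline{\widetilde{G}} = \widetilde{G}$ reduces to the single condition $\xi_{\boldsymbol{\mu}} = \xi_{\boldsymbol{\lambda}}$ for every $\check{\boldsymbol{\mu}}$ with $\Delta^\varepsilon_{\check{\boldsymbol{\lambda}},\check{\boldsymbol{\mu}};\check{\boldsymbol{s}}}(q) \neq 0$.

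This $\xi$-constancy is the main obstacle, and I would dispatch it by adapting the proof of Lemma \ref{lem6} to the level-$(\ell-1)$ setting: the substance of that lemma is the abacus observation that whenever $s_j$ dominates all bead positions of a two-bead $q$-wedge, the straightening rule preserves $\xi(\emptyset^{[j]}, -)$, and this combinatorial check involves only beads on runners other than $j$ and so transfers verbatim to level $\ell-1$. Since the dominance $s_j \geq \tilde{\lambda}_1$ propagates to every $\check{\boldsymbol{\mu}} \leq \check{\boldsymbol{\lambda}}$ by the argument of Lemma \ref{lem.3.2}, every straightening operation performed in the iterative construction of $G^\varepsilon(\check{\boldsymbol{\lambda}};\check{\boldsymbol{s}})$ preserves $\xi$, forcing $\xi_{\boldsymbol{\mu}} = \xi_{\boldsymbol{\lambda}}$ uniformly. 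Once this holds, the $q$-powers cancel and $\widetilde{G} = \sum_{\boldsymbol{\mu} \in \check{\Pi}^\ell} \Delta^\varepsilon_{\check{\boldsymbol{\lambda}},\check{\boldsymbol{\mu}};\check{\boldsymbol{s}}}(q) \, |\boldsymbol{\mu};\boldsymbol{s}\rangle$; the congruence $\widetilde{G} \equiv |\boldsymbol{\lambda};\boldsymbol{s}\rangle$ modulo $q\mathcal{L}^+$ or $q^{-1}\mathcal{L}^-$ (according to $\varepsilon$) is then immediate from $\Delta^\varepsilon_{\check{\boldsymbol{\lambda}},\check{\boldsymbol{\lambda}};\check{\boldsymbol{s}}}(q) = 1$ and unitriangularity in level $\ell-1$. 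Uniqueness of $G^\varepsilon(\boldsymbol{\lambda};\boldsymbol{s})$ yields $\widetilde{G} = G^\varepsilon(\boldsymbol{\lambda};\boldsymbol{s})$, which is the claim.
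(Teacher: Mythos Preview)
Your proof is correct and follows essentially the same route as the paper: define the candidate sum, verify the congruence condition trivially, and establish bar-invariance by combining Corollary \ref{cor7}, Lemma \ref{lem8}, the $\xi$-constancy of Lemma \ref{lem6}, and the bar-invariance of $G^{\varepsilon}(\check{\boldsymbol{\lambda}};\check{\boldsymbol{s}})$ at level $\ell-1$. The only cosmetic difference is that you package the candidate as $q^{-\xi_{\boldsymbol{\lambda}}}\,\emptyset^{[j]}\wedge G^{\varepsilon}(\check{\boldsymbol{\lambda}};\check{\boldsymbol{s}})$ first and then expand, whereas the paper writes down the sum $F$ directly and computes $\overline{F}$; and you are explicit that Lemma \ref{lem6} must be read at level $\ell-1$ (which the paper's invocation of that lemma also tacitly requires, since the relevant nonvanishing is $\Delta^{\varepsilon}_{\check{\boldsymbol{\lambda}},\check{\boldsymbol{\mu}};\check{\boldsymbol{s}}}\neq 0$).
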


\begin{proof}
We only show the statement in for $G^{-}$.
The case of $G^{+}$ is treated similarly.

Take a sufficiently large integer $r$. 
Put $F = \sum_{\boldsymbol{\mu} \in \check{\Pi}^{\ell} } 
\Delta_{\check{ \boldsymbol{\lambda} } , \check{ \boldsymbol{\mu} } ;  \check{ \boldsymbol{s} } }^{-} (q) \,
| \boldsymbol{\mu} ; \boldsymbol{s} \rangle $. 
We prove $\overline{F} = F$ and $F \equiv | \boldsymbol{\lambda} ; \boldsymbol{s} \rangle$ mod 
$q^{-1} \mathcal{L}^{-}$.

The second statement is clear since 
$\check{\boldsymbol{\lambda}} = \check{\boldsymbol{\mu}}$ if and only if $\boldsymbol{\lambda} = \boldsymbol{\mu} $.
We show $\overline{F} = F$. 
Let $\xi = \xi (\emptyset^{[j]} , \check{v_{\boldsymbol{\lambda}}  })$. 

\begin{align*}
\overline{F} 
&= \sum_{\boldsymbol{\mu} \in \check{\Pi}^{\ell} } 
\Delta_{\check{ \boldsymbol{\lambda} } , \check{ \boldsymbol{\mu} } ;  \check{ \boldsymbol{s} } }^{-} (q^{-1}) \,
\overline{ u_{ \boldsymbol{\mu} } } \\
&= \sum_{\boldsymbol{\mu} \in \check{\Pi}^{\ell} } 
\Delta_{\check{ \boldsymbol{\lambda} } , \check{ \boldsymbol{\mu} } ;  \check{ \boldsymbol{s} } }^{-} (q^{-1}) \,
q^{- \xi } \, \emptyset^{[j]} \wedge \overline{ \check{ u_{ \boldsymbol{\mu} } }} 
\hspace{3em} \text{ (By Lemma \ref{lem8} \& Lemma \ref{lem6}) }  \\
&= q^{- \xi} \bigg( \sum_{\boldsymbol{\mu} \in \check{\Pi}^{\ell} } 
\Delta_{\check{ \boldsymbol{\lambda} } , \check{ \boldsymbol{\mu} } ;  
\check{ \boldsymbol{s} } }^{-} (q^{-1}) \,
\, \emptyset^{[j]} \wedge \overline{ \check{ u_{ \boldsymbol{\mu} } }} \bigg) \\
&= q^{- \xi} \, \emptyset^{[j]} \wedge 
\bigg( \sum_{\boldsymbol{\mu} \in \check{\Pi}^{\ell} } 
\Delta_{\check{ \boldsymbol{\lambda} } , \check{ \boldsymbol{\mu} } ;  \check{ \boldsymbol{s} } }^{-} (q^{-1}) \,
 \overline{ \check{ u_{ \boldsymbol{\mu} } }} \bigg) \\
&= q^{- \xi} \, \emptyset^{[j]} \wedge 
\bigg( \overline{ \sum_{\boldsymbol{\mu} \in \check{\Pi}^{\ell} } 
\Delta_{\check{ \boldsymbol{\lambda} } , \check{ \boldsymbol{\mu} } ;  \check{ \boldsymbol{s} } }^{-} (q) \,
\check{ u_{ \boldsymbol{\mu} } }} \bigg) 
\end{align*}

Note that 
$G^{-}(\check{ \boldsymbol{\lambda} } ;  \check{ \boldsymbol{s} } ) = 
\sum_{\boldsymbol{\mu} \in \check{\Pi}^{\ell} } 
\Delta_{\check{ \boldsymbol{\lambda} } , \check{ \boldsymbol{\mu} } ;  \check{ \boldsymbol{s} } }^{-} (q) \,
\check{ u_{ \boldsymbol{\mu} } }$ 
and 
$\overline{ G^{-}(\check{ \boldsymbol{\lambda} } ;  \check{ \boldsymbol{s} } ) } = 
G^{-}(\check{ \boldsymbol{\lambda} } ;  \check{ \boldsymbol{s} } ) $.
Therefore, 

\begin{align*}
\overline{F} 
&= q^{- \xi} \, \emptyset^{[j]} \wedge 
\bigg( \sum_{\boldsymbol{\mu} \in \check{\Pi}^{\ell} } 
\Delta_{\check{ \boldsymbol{\lambda} } , \check{ \boldsymbol{\mu} } ;  \check{ \boldsymbol{s} } }^{-} (q) \,
\check{ u_{ \boldsymbol{\mu} } }\bigg) \\
&= \sum_{\boldsymbol{\mu} \in \check{\Pi}^{\ell} } 
\Delta_{\check{ \boldsymbol{\lambda} } , \check{ \boldsymbol{\mu} } ;  \check{ \boldsymbol{s} } }^{-} (q) \,
v_{\boldsymbol{\mu}} 
\hspace{3em} \text{ (By Corollary \ref{cor7} \& Lemma \ref{lem6}) }  \\
&= F.
\end{align*}

\end{proof}

%
%

\section{Proof of Theorem \ref{thmB}}

Throughout this section. we fix $j$ $(1 \le j \le \ell)$.  

\subsection{The quotient space $\widetilde{\boldsymbol{F}_{q}}[\boldsymbol{s}]_{\le N}$}

In this paragraph, we fix a positive integer $N$ and assume that $s_{j}$ is sufficiently small for $N$, i.e. 
	$s_{i} - s_{j} \ge N$ for all $i \ne j$. 

We define $\widetilde{\boldsymbol{F}_{q}}[\boldsymbol{s}]_{\le N}$ to be the subspace spanned by  
$\{ \, | \boldsymbol{\lambda} ; \boldsymbol{s} \rangle \, | \, 
	\lambda^{(j)} = \emptyset \, , \, | \boldsymbol{\lambda} | \le N  \}$.
We also define a map 
$\pi \colon \boldsymbol{F}_{q}[\boldsymbol{s}] \rightarrow \widetilde{\boldsymbol{F}_{q}}[\boldsymbol{s}]_{\le N}$ 
(quotient map) by 

\begin{equation*}
\pi( \, | \boldsymbol{\lambda} ; \boldsymbol{s} \rangle ) = 
\begin{cases}
	\, | \boldsymbol{\lambda} ; \boldsymbol{s} \rangle 
		& \text{ if } \lambda^{(j)} = \emptyset  \, \text{ and } \, | \boldsymbol{\lambda} | \le N  \\
	\, 0 
		& \text{ otherwise } 
\end{cases}
\end{equation*}

We import the bar involution on $\widetilde{\boldsymbol{F}_{q}}[\boldsymbol{s}]_{\le N}$ from 
$\boldsymbol{F}_{q}[\boldsymbol{s}]$, that is 

\begin{equation}
\overline{ \pi( \, | \boldsymbol{\lambda} ; \boldsymbol{s} \rangle ) } =
\pi( \overline{ | \boldsymbol{\lambda} ; \boldsymbol{s} \rangle } ) 
\,\,, \hspace{2em}
(| \boldsymbol{\lambda} ; \boldsymbol{s} \rangle  \in \widetilde{\boldsymbol{F}_{q}}[\boldsymbol{s}]_{\le N}).
\end{equation}
The unitriangularity of the bar involution (\ref{unitriangularity}) and Lemma \ref{lem9} assure that 
the bar involution $\widetilde{\boldsymbol{F}_{q}}[\boldsymbol{s}]_{\le N}$ is well-defined.

It is clear that the following two property hold from the definition of $\widetilde{\boldsymbol{F}_{q}}[\boldsymbol{s}]_{\le N}$. 

\begin{proposition}
Let $\varepsilon \in \{ + , - \}$.
There is a unique basis 
$\widetilde{\{ G^{\varepsilon}}(\boldsymbol{\lambda} ; \boldsymbol{s}) \, 
  | \, \boldsymbol{\lambda} \in \check{\Pi}^{\ell} \,,\, | \boldsymbol{\lambda}| \le N \}$ 
of $\widetilde{ \boldsymbol{F}_{q} }[\boldsymbol{s}]_{\le N}$ such that 

\begin{align*}
\text{\rm{(i)}} \hspace{1em} & 
\overline{\widetilde{G}^{\varepsilon}(\boldsymbol{\lambda} ; \boldsymbol{s})} 
= \widetilde{G}^{\varepsilon}(\boldsymbol{\lambda} ; \boldsymbol{s})  \,\,,  \\
\text{\rm{(ii)}} \hspace{1em} & 
\widetilde{G}^{\varepsilon}(\boldsymbol{\lambda} ; \boldsymbol{s}) 
\equiv | \, \boldsymbol{\lambda} ; \boldsymbol{s} \rangle
\,\,\,\, \mathrm{mod} \,\, q^{\varepsilon} \, \widetilde{\mathcal{L}}^{\varepsilon} ,
\text{ where } \hspace{1em}
\widetilde{\mathcal{L}}^{\varepsilon} = \bigoplus_{\boldsymbol{\lambda} \in \check{\Pi^{\ell}} } 
\mathbb{Q}[q^{\varepsilon}] \, | \boldsymbol{\lambda} ; \boldsymbol{s} \rangle 
\end{align*}
and $\check{\Pi}^{\ell} = \{ \boldsymbol{\lambda} \in \Pi^{\ell} \, | \, \lambda^{(j)} = \emptyset \}$ . 
\end{proposition}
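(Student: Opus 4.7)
The plan is to reduce this to the standard Kazhdan--Lusztig / Lusztig lemma argument, so the real work is verifying that the induced bar involution on the quotient is well-defined and remains unitriangular; once that is in place, existence and uniqueness of $\widetilde{G}^{\varepsilon}$ follow from the usual recursion.

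First I would verify that the bar involution descends to $\widetilde{\boldsymbol{F}_{q}}[\boldsymbol{s}]_{\le N}$. By the unitriangularity (\ref{unitriangularity}) of $\overline{\phantom{a}}$ on $\boldsymbol{F}_{q}[\boldsymbol{s}]$, if we expand
\begin{equation*}
\overline{|\boldsymbol{\mu};\boldsymbol{s}\rangle} = \sum_{\boldsymbol{\nu}} a_{\boldsymbol{\mu},\boldsymbol{\nu}}(q)\,|\boldsymbol{\nu};\boldsymbol{s}\rangle,
\end{equation*}
then every $\boldsymbol{\nu}$ appearing satisfies $|\boldsymbol{\lambda};\boldsymbol{s}\rangle \ge |\boldsymbol{\nu};\boldsymbol{s}\rangle$, hence in particular $|\boldsymbol{\nu}|=|\boldsymbol{\mu}|$. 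Since $s_{j}$ is sufficiently small for $N$, $s_{j}$ is also sufficiently small for every $|\boldsymbol{\mu};\boldsymbol{s}\rangle$ with $|\boldsymbol{\mu}|\le N$, so the contrapositive of Lemma \ref{lem9} gives: if $\mu^{(j)}\ne\emptyset$, then every $\boldsymbol{\nu}$ in the expansion has $\nu^{(j)}\ne\emptyset$. Combined with the fact that $\overline{\phantom{a}}$ preserves $|\boldsymbol{\mu}|$, this shows that $\overline{\phantom{a}}$ preserves the kernel $\ker\pi$, so the rule $\overline{\pi(v)}=\pi(\overline{v})$ gives a well-defined $\mathbb{Q}$-linear involution on $\widetilde{\boldsymbol{F}_{q}}[\boldsymbol{s}]_{\le N}$ that is semilinear with respect to $q\mapsto q^{-1}$.

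Next I would check unitriangularity of the induced involution. For $\boldsymbol{\lambda}\in\check{\Pi}^{\ell}$ with $|\boldsymbol{\lambda}|\le N$,
\begin{equation*}
\overline{|\boldsymbol{\lambda};\boldsymbol{s}\rangle} \;=\; |\boldsymbol{\lambda};\boldsymbol{s}\rangle \;+\; \sum_{\boldsymbol{\nu}\in\check{\Pi}^{\ell},\,\boldsymbol{\nu}<\boldsymbol{\lambda}} a_{\boldsymbol{\lambda},\boldsymbol{\nu}}(q)\,|\boldsymbol{\nu};\boldsymbol{s}\rangle
\end{equation*}
in $\widetilde{\boldsymbol{F}_{q}}[\boldsymbol{s}]_{\le N}$, because the condition $\nu^{(j)}\ne\emptyset$ kills terms under $\pi$ while $|\boldsymbol{\nu}|=|\boldsymbol{\lambda}|\le N$ automatically. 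Thus the matrix of $\overline{\phantom{a}}$ on the basis $\{\,|\boldsymbol{\lambda};\boldsymbol{s}\rangle : \boldsymbol{\lambda}\in\check{\Pi}^{\ell},\,|\boldsymbol{\lambda}|\le N\}$ is unitriangular with respect to the dominance order inherited from Definition \ref{order}.

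Finally I would invoke Lusztig's lemma. The semilinear involution $\overline{\phantom{a}}$ on $\widetilde{\boldsymbol{F}_{q}}[\boldsymbol{s}]_{\le N}$ is unitriangular in a finite-dimensional space once we restrict to a fixed total size, and the set of sizes involved is finite ($0\le m\le N$); the standard recursive construction (subtract off in turn the bar-antisymmetric corrections in $q\,\mathbb{Q}[q]\,|\boldsymbol{\nu};\boldsymbol{s}\rangle$ for the $\varepsilon=+$ case, respectively in $q^{-1}\,\mathbb{Q}[q^{-1}]\,|\boldsymbol{\nu};\boldsymbol{s}\rangle$ for the $\varepsilon=-$ case, running over $\boldsymbol{\nu}<\boldsymbol{\lambda}$ in an order refining $\le$) produces unique bar-invariant elements $\widetilde{G}^{\varepsilon}(\boldsymbol{\lambda};\boldsymbol{s})\equiv|\boldsymbol{\lambda};\boldsymbol{s}\rangle\ \mathrm{mod}\ q^{\varepsilon}\widetilde{\mathcal{L}}^{\varepsilon}$. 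The only subtle point, and where I expect to spend the most care, is the first step above: correctly invoking Lemma \ref{lem9} to close the kernel of $\pi$ under $\overline{\phantom{a}}$, since without the hypothesis that $s_{j}$ be sufficiently small for $N$ the map $\overline{\phantom{a}}$ on basis vectors with $\mu^{(j)}\ne\emptyset$ could a priori produce terms with $\nu^{(j)}=\emptyset$ and break well-definedness.
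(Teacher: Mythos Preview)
Your proposal is correct and follows essentially the same approach as the paper: the paper simply remarks that the well-definedness of the induced bar involution on $\widetilde{\boldsymbol{F}_{q}}[\boldsymbol{s}]_{\le N}$ follows from the unitriangularity (\ref{unitriangularity}) together with Lemma~\ref{lem9}, and then asserts that the proposition is clear from the definition (i.e., the standard Lusztig argument), which is exactly what you spell out. One small slip: in your first displayed expansion you write ``every $\boldsymbol{\nu}$ appearing satisfies $|\boldsymbol{\lambda};\boldsymbol{s}\rangle \ge |\boldsymbol{\nu};\boldsymbol{s}\rangle$'' where you mean $|\boldsymbol{\mu};\boldsymbol{s}\rangle \ge |\boldsymbol{\nu};\boldsymbol{s}\rangle$.
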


\begin{definition}
Let $\varepsilon \in \{ + , - \}$. 
Suppose that $\lambda^{(j)} = \mu^{(j)} = \emptyset$, 
	$ | \boldsymbol{\lambda} | \le N$ and $ | \boldsymbol{\mu} | \le N$. 
Define  
$\widetilde{ \Delta}^{\varepsilon} _{\boldsymbol{\lambda},\boldsymbol{\mu}}(q)$ 
by

\begin{align*}
\widetilde{G}^{+}(\boldsymbol{\mu} ; \boldsymbol{s}) = \sum_{\boldsymbol{\lambda} \in \check{\Pi}} 
\widetilde{\Delta}^{+}_{\boldsymbol{\lambda},\boldsymbol{\mu}}(q) 
  \, | \, \boldsymbol{\lambda} ; \boldsymbol{s} \rangle 
\hspace{2em} , \hspace{2em} 
\widetilde{G}^{-}(\boldsymbol{\lambda} ; \boldsymbol{s}) = \sum_{\boldsymbol{\mu} \in \check{\Pi}} 
\widetilde{\Delta}^{-}_{\boldsymbol{\lambda},\boldsymbol{\mu}}(q) \, | \, \boldsymbol{\mu} ; \boldsymbol{s} \rangle .
\end{align*}
\end{definition}

\begin{proposition}
Let $\varepsilon \in \{ + , - \}$. 
If $\lambda^{(j)} = \mu^{(j)} = \emptyset$, 
	$ | \boldsymbol{\lambda} | \le N$ and $ | \boldsymbol{\mu} | \le N$, then 

\begin{equation*}
\widetilde{\Delta}^{\varepsilon}_{\boldsymbol{\lambda},\boldsymbol{\mu}}(q) = 
\Delta^{\varepsilon}_{\boldsymbol{\lambda},\boldsymbol{\mu}}(q) .
\end{equation*}
\label{prop10}
\end{proposition}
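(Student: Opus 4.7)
The plan is a standard uniqueness argument: I will show that the image of the canonical basis element $G^{\varepsilon}(\boldsymbol{\lambda} ; \boldsymbol{s})$ under the quotient map $\pi$ satisfies the two defining properties of $\widetilde{G}^{\varepsilon}(\boldsymbol{\lambda} ; \boldsymbol{s})$, and then read off equality of the $q$-decomposition numbers by comparing coefficients. I will give the argument for $\varepsilon = -$; the case $\varepsilon = +$ is entirely parallel.

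First I would note that, by unitriangularity of $\Delta^{-}$ with respect to $\ge$, every $\boldsymbol{\mu}$ appearing in $G^{-}(\boldsymbol{\lambda} ; \boldsymbol{s})$ satisfies $|\boldsymbol{\mu}| = |\boldsymbol{\lambda}| \le N$, so that
\begin{equation*}
\pi\bigl(G^{-}(\boldsymbol{\lambda} ; \boldsymbol{s})\bigr) = \sum_{\boldsymbol{\mu} \in \check{\Pi}^{\ell},\, |\boldsymbol{\mu}| \le N} \Delta^{-}_{\boldsymbol{\lambda},\boldsymbol{\mu}}(q) \, | \boldsymbol{\mu} ; \boldsymbol{s} \rangle.
\end{equation*}
Next I would verify the two defining properties. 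The congruence property is immediate: since $G^{-}(\boldsymbol{\lambda} ; \boldsymbol{s}) \equiv | \boldsymbol{\lambda} ; \boldsymbol{s} \rangle$ mod $q^{-1} \mathcal{L}^{-}$ and $\pi(\mathcal{L}^{-}) \subseteq \widetilde{\mathcal{L}}^{-}$, applying $\pi$ gives $\pi(G^{-}(\boldsymbol{\lambda} ; \boldsymbol{s})) \equiv | \boldsymbol{\lambda} ; \boldsymbol{s} \rangle$ mod $q^{-1} \widetilde{\mathcal{L}}^{-}$.

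For bar-invariance, the key point is that the induced bar involution on $\widetilde{\boldsymbol{F}_{q}}[\boldsymbol{s}]_{\le N}$ is well-defined, i.e.\ $\ker \pi$ is stable under $\overline{\phantom{a}}$ on $\boldsymbol{F}_{q}[\boldsymbol{s}]$. This kernel is spanned by those $| \boldsymbol{\nu} ; \boldsymbol{s} \rangle$ with $\nu^{(j)} \ne \emptyset$ or $|\boldsymbol{\nu}| > N$. If $|\boldsymbol{\nu}| > N$ then homogeneity of bar forces every term of $\overline{| \boldsymbol{\nu} ; \boldsymbol{s} \rangle}$ to have size $>N$, hence to lie in $\ker \pi$. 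If $|\boldsymbol{\nu}| \le N$ but $\nu^{(j)} \ne \emptyset$, then for any $\boldsymbol{\nu}'$ with $a_{\boldsymbol{\nu},\boldsymbol{\nu}'}(q) \ne 0$ the unitriangularity (\ref{unitriangularity}) gives $| \boldsymbol{\nu} ; \boldsymbol{s} \rangle \ge | \boldsymbol{\nu}' ; \boldsymbol{s} \rangle$; since $|\boldsymbol{\nu}'| = |\boldsymbol{\nu}| \le N$ and $s_{j}$ is sufficiently small for $N$ (and hence for $| \boldsymbol{\nu} ; \boldsymbol{s} \rangle$), Lemma \ref{lem9} applied with $\boldsymbol{\nu},\boldsymbol{\nu}'$ in place of $\boldsymbol{\lambda},\boldsymbol{\mu}$ yields the contrapositive $\nu'^{(j)} \ne \emptyset$, so again $| \boldsymbol{\nu}' ; \boldsymbol{s} \rangle \in \ker \pi$. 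Hence $\overline{\phantom{a}}$ descends, and applying $\pi$ to $\overline{G^{-}(\boldsymbol{\lambda} ; \boldsymbol{s})} = G^{-}(\boldsymbol{\lambda} ; \boldsymbol{s})$ yields $\overline{\pi(G^{-}(\boldsymbol{\lambda} ; \boldsymbol{s}))} = \pi(G^{-}(\boldsymbol{\lambda} ; \boldsymbol{s}))$.

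By the uniqueness of $\widetilde{G}^{-}$, the two verified properties identify $\pi(G^{-}(\boldsymbol{\lambda} ; \boldsymbol{s})) = \widetilde{G}^{-}(\boldsymbol{\lambda} ; \boldsymbol{s})$. Comparing the coefficients of $| \boldsymbol{\mu} ; \boldsymbol{s} \rangle$ for $\mu^{(j)} = \emptyset$, $|\boldsymbol{\mu}| \le N$ then gives $\widetilde{\Delta}^{-}_{\boldsymbol{\lambda},\boldsymbol{\mu}}(q) = \Delta^{-}_{\boldsymbol{\lambda},\boldsymbol{\mu}}(q)$. The main (in fact the only nontrivial) obstacle is the well-definedness of bar on the quotient, which is precisely where Lemma \ref{lem9} does essential work; the remainder of the proof is bookkeeping around the uniqueness characterization of the canonical basis.
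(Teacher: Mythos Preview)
Your proof is correct and follows the same route the paper implicitly takes: the paper declares Proposition~\ref{prop10} to be clear from the definition of $\widetilde{\boldsymbol{F}_{q}}[\boldsymbol{s}]_{\le N}$, having already observed (in the sentence preceding Proposition~6.1) that the bar involution on the quotient is well-defined via unitriangularity~(\ref{unitriangularity}) together with Lemma~\ref{lem9}. Your argument simply spells this out by verifying that $\ker\pi$ is bar-stable and then invoking the uniqueness characterization of $\widetilde{G}^{\varepsilon}$.
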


Note that if $N \ge | \boldsymbol{\lambda} |$ and $N \ge | \boldsymbol{\mu} |$, then 
$\widetilde{\Delta}^{\varepsilon}_{\boldsymbol{\lambda},\boldsymbol{\mu}}(q)$ is independent of the choice of $N$. 

\subsection{Proof of Theorem \ref{thmB}}

As in \S4, we only prove Theorem \ref{thmB} in the case of $\varepsilon = -$.

In this paragraph, we assume that $s_{j}$ is sufficiently small for $| \boldsymbol{\lambda} ; \boldsymbol{s} \rangle$. 
Let $N = | \boldsymbol{\lambda} |$ and we fix a sufficient large integer $r$. 

The structure of our proof of Theorem \ref{thmB} is similar to that of Theorem \ref{thmA}. 
Lemma \ref{lem33} and Lemma \ref{lem44} play roles similar to Lemma \ref{lem6} and Corollary \ref{cor7} respectively. 

\begin{lemma}
Let $\boldsymbol{\lambda} , \boldsymbol{\mu} \in \Pi^{\ell}$ such that 
$\Delta^{-}_{\boldsymbol{\lambda} , \boldsymbol{\mu}} (q) \not=0 $. 
If $s_{j}$ is sufficiently small for  $| \boldsymbol{\lambda} ; \boldsymbol{s} \rangle$ 
and $\boldsymbol{\lambda}^{(j)} = \boldsymbol{\mu}^{(j)} = \emptyset $, then 

\begin{equation*}
\xi ( \emptyset^{[j]} , \check{v_{\boldsymbol{\lambda} }} )  = \xi ( \emptyset^{[j]} , \check{v_{\boldsymbol{\mu} }} ) .
\end{equation*}
\label{lem33}
\end{lemma}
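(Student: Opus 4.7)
The plan mirrors the proof of Lemma \ref{lem6}, with the roles of ``top'' and ``bottom'' of the column-$j$ abacus interchanged. I would first establish the invariance of $\xi(\emptyset^{[j]}, \check{u})$ under a single application of the straightening rule of Proposition \ref{s.rule}, provided both factors being straightened lie in columns $\ne j$ and $s_j$ is sufficiently small for the $q$-wedge $u$, and then propagate this invariance through the iterated straightening that expresses $\overline{v_{\boldsymbol{\lambda}}}$ in the standard basis.

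For the single-step invariance, consider $u_{k_2} \wedge u_{k_1}$ with $d_1, d_2 \ne j$. Corollary \ref{s-rules-cor} produces the principal term $u_{k_1} \wedge u_{k_2}$ together with correction terms $u_{k'_1} \wedge u_{k'_2}$ whose factors satisfy $k_1 < k'_1, k'_2 < k_2$, have $d$-parts still equal to $d_1, d_2$ (so the \emph{checked} wedges remain meaningful), and whose $c$-parts are $c_1, c_2$ swapped. I would then draw the analogue of the abacus picture used in the proof of Lemma \ref{lem6}, but with $\emptyset^{[j]}$ now sitting at the \emph{bottom} of the column-$j$ runner, since $s_j$ is very small. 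The hypothesis $s_i - s_j \ge N$ ensures that each straightened bead, whose per-column position is at least $s_j$, lies strictly above every bead of $\emptyset^{[j]}$ on the matching $c$-runner in the full order; hence its contribution to $\xi(\emptyset^{[j]}, \check{-})$ depends only on its $c$-part, not on its precise position. Since the straightening rule preserves the multiset of $c$-parts among the two straightened factors, the total $\xi$ is unchanged by a single straightening step.

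To conclude, note that $\Delta^{-}_{\boldsymbol{\lambda},\boldsymbol{\mu}}(q) \ne 0$ implies $v_{\boldsymbol{\mu}}$ appears with nonzero coefficient in the ordered-basis expansion of $\overline{v_{\boldsymbol{\lambda}}}$, which is built by iterated applications of the straightening rule. By Lemma \ref{lem9}, the combined assumptions $\lambda^{(j)} = \mu^{(j)} = \emptyset$ and $v_{\boldsymbol{\lambda}} \ge v_{\boldsymbol{\mu}}$ (which follows from unitriangularity) force every intermediate ordered wedge appearing in this expansion to have $j$-th component equal to $\emptyset$, so the single-step argument applies at every stage. Therefore $\xi(\emptyset^{[j]}, \check{v_{\boldsymbol{\lambda}}}) = \xi(\emptyset^{[j]}, \check{v_{\boldsymbol{\mu}}})$.

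The main obstacle I expect is the careful abacus bookkeeping in the single-step invariance: correction-term beads can shift per-column positions by multiples of $n$, and one must verify that the sufficiently-small gap $s_i - s_j \ge N$ is enough to keep all such beads strictly above the top of $\emptyset^{[j]}$ in the full order. A secondary subtlety is the treatment of beads coming from the universal truncation tail $u_{s-r} \wedge u_{s-r-1} \wedge \cdots$, which may lie below $\emptyset^{[j]}$; however, these tail factors are identical in $\check{v_{\boldsymbol{\lambda}}}$ and $\check{v_{\boldsymbol{\mu}}}$ and are unaffected by the straightening steps in the top part, so they contribute equally to $\xi$ on both sides and cause no discrepancy.
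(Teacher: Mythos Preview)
Your core argument---single-step invariance of $\xi(\emptyset^{[j]},\check{\,\cdot\,})$ under straightening of a pair with $d_1,d_2\ne j$, established via the abacus picture (with $\emptyset^{[j]}$ now at the small-position end, so that the contribution of each non-$j$ bead depends only on its $c$-part), followed by propagation through the iterated straightening of $\overline{v_{\boldsymbol\lambda}}$---is exactly the paper's approach, and the paper's proof is no more detailed than your sketch on these points.

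One correction: your appeal to Lemma~\ref{lem9} does not do what you claim. That lemma says that if $|\boldsymbol{\nu};\boldsymbol{s}\rangle \ge |\boldsymbol{\mu};\boldsymbol{s}\rangle$ and $\mu^{(j)}=\emptyset$ then $\nu^{(j)}=\emptyset$; it gives information about the \emph{larger} element. For an intermediate ordered wedge $v_{\boldsymbol{\nu}}$ appearing in the expansion you only know $|\boldsymbol{\lambda};\boldsymbol{s}\rangle \ge |\boldsymbol{\nu};\boldsymbol{s}\rangle$, not $|\boldsymbol{\nu};\boldsymbol{s}\rangle \ge |\boldsymbol{\mu};\boldsymbol{s}\rangle$, so you cannot conclude $\nu^{(j)}=\emptyset$ (and indeed such $\boldsymbol{\nu}$ may well have $\nu^{(j)}\ne\emptyset$). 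The paper does not make or use this claim; it simply records the single-step invariance for pairs with $d_i\ne j$ and per-column positions $k_i\ge s_j$, observes that the ``sufficiently small'' hypothesis guarantees all relevant non-$j$ beads satisfy this bound, and concludes. Your remarks on the truncation tail are a reasonable sanity check but are likewise absent from the paper's argument.
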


\begin{proof}

Let $u_{k_{1}}^{(d_{1})} \wedge u_{k_{2}}^{(d_{2})}$ be a $q$-wedge product such that 
$d_{i} \not= j$ and $k_{i} \geq s_{j}$ for $i=1,2$.
Let $u_{k_{1}'}^{(d_{1}')} \wedge u_{k_{2}'}^{(d_{2}')}$ be a $q$-wedge product 
which appears in the linear expansion of the straightening of $u_{k_{2}} \wedge u_{k_{1}}$.

We put $\xi = \xi ( \emptyset^{[j]} , u_{k_{1}}^{(d_{1})} \wedge u_{k_{2}}^{(d_{2})} )$, 
$\xi_{1} = \xi ( \emptyset^{[j]} , u_{k_{1}}^{(d_{1})} )$, $\xi_{2} = \xi ( \emptyset^{[j]} , u_{k_{2}}^{(d_{2})} )$, 
$\xi' = \xi ( \emptyset^{[j]} , u_{k_{1}'}^{(d_{1}')} \wedge u_{k_{2}'}^{(d_{2}')} )$, 
$\xi_{1}' = \xi ( \emptyset^{[j]} , u_{k_{1}'}^{(d_{1}')} )$ and $\xi_{2}' = \xi ( \emptyset^{[j]} , u_{k_{2}'}^{(d_{2}')} )$.
Note that $\xi = \xi_{1} + \xi_{2}$ and $\xi' = \xi_{1}' + \xi_{2}'$.

Then, from the abacus presentation below, we obtain $\xi = \xi'$.

\begin{equation*}
\begin{array}{ccccccccccccccc}
& d=d_{1} & & d=d_{2} & & d=j & & & & d=d_{1} & & d=d_{2} & & d=j & \\
& \multirow{3}{*}{$\xi_{1} \begin{cases} \bullet \\ \vdots \\ \bullet \end{cases}$} & 
& \multirow{3}{*}{$\xi_{2} \begin{cases} \bullet \\ \vdots \\ \bullet \end{cases}$}
& \multicolumn{1}{|c}{} & & \multicolumn{1}{c|}{} &  & 
& \multirow{3}{*}{$\xi_{1}' \begin{cases} \bullet \\ \vdots \\ \bullet \end{cases}$} & 
& \multirow{3}{*}{$\xi_{2}' \begin{cases} \bullet \\ \vdots \\ \bullet \end{cases}$} 
& \multicolumn{1}{|c}{} & & \multicolumn{1}{c|}{} \\
& & & & \multicolumn{1}{|c}{} & & \multicolumn{1}{c|}{} &  & & & & & \multicolumn{1}{|c}{} & & \multicolumn{1}{c|}{} \\
& & & & \multicolumn{1}{|c}{} & & \multicolumn{1}{c|}{} &  & & & & & \multicolumn{1}{|c}{} & & \multicolumn{1}{c|}{} \\
& & & & \multicolumn{1}{|c}{} & & \multicolumn{1}{c|}{} & \longrightarrow & & & & & \multicolumn{1}{|c}{} & & \multicolumn{1}{c|}{} \\
\cline{5-7} \cline{13-15} 
& & & \centercircle{$k_{2}$} & & & & \text{straightening rule} & & & & & & & \\
& & & & & & &  & & \centercircle{$k_{1}'$} & & & & & \\
& & & & & & &  & & & & \centercircle{$k_{2}'$} & & & \\
& \centercircle{$k_{1}$} & & & & & &  & & & & & & & 
\end{array}
\end{equation*}
where beads are filled in the boxed region. 

Since $s_{j}$ is sufficiently small for $v_{\boldsymbol{\lambda}}$, for each $i \not= j$,  
$$ \lambda_{1}^{(i)} + s_{i} >  \lambda_{2}^{(i)} -1 + s_{i} > \cdots >  \lambda_{l}^{(i)} - l + s_{i} \ge s_{j}$$
where $l = l (\lambda^{(i)})$ is the length of $\lambda^{(i)}$.

If $\Delta^{-}_{\boldsymbol{\lambda} , \boldsymbol{\mu}} (q) \not=0 $, then 
$v_{\boldsymbol{\mu}}$ appears in the linear expansion of the straightening of $\overline{ v_{\boldsymbol{\lambda}} }$.
Therefore, the above argument assures the assertion.

\end{proof}

\begin{lemma}
Let $\boldsymbol{\lambda} \in \Pi^{\ell}$. If $\lambda^{(j)} = \emptyset$, then 

$$ v_{\boldsymbol{\lambda}} = q^{-\xi ( \check{v_{\boldsymbol{\lambda}}} , \emptyset^{[j]} )} \, 
\check{v_{\boldsymbol{\lambda}}} \wedge \emptyset^{[j]} .$$
See Definition \ref{def-xi}, Definition \ref{checkU} and Definition \ref{j-partition} 
	for the definition of $\xi$, $\check{v_{\boldsymbol{\lambda}}}$ and $\emptyset^{[j]}$ respectively.
\label{lem44}
\end{lemma}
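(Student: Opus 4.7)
The plan is to mirror the proof of Corollary \ref{cor7}, replacing Lemma \ref{lem3}(i), which pulls $\emptyset^{[j]}$ to the left of an ordered wedge in the sufficient-largeness setting, by its counterpart Lemma \ref{lem3}(ii), which moves a non-$j$ factor past a consecutive block of $j$-factors; running that swap in reverse pushes the $\emptyset^{[j]}$ factors in $v_{\boldsymbol{\lambda}}$ to the right end of the wedge.

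Since $\lambda^{(j)}=\emptyset$, every $j$-factor appearing in $v_{\boldsymbol{\lambda}}$ belongs to $\emptyset^{[j]}$, and these consecutive-subscript $j$-factors sit in $v_{\boldsymbol{\lambda}}$ in maximal runs $B_1,B_2,\ldots,B_m$ separated by groups $N_0,N_1,\ldots,N_m$ of non-$j$ factors, so
\[
v_{\boldsymbol{\lambda}}=N_0\wedge B_1\wedge N_1\wedge B_2\wedge\cdots\wedge B_m\wedge N_m,\qquad B_k=u_{[g_k+t_k,\,g_k]}^{(j)}.
\]
For each non-$j$ factor $u_a^{(i)}$ inside $N_k$, I would slide it leftward one step past $B_k$ via the inverse of Lemma \ref{lem3}(ii):
\[
B_k\wedge u_a^{(i)} = q^{-\xi(u_a^{(i)},\,B_k)}\, u_a^{(i)}\wedge B_k.
\]
The hypothesis of Lemma \ref{lem3}(ii) is met at each stage because $B_k$ is a maximal consecutive-subscript block ending just above $u_a^{(i)}$, so its smallest element $u_{g_k}^{(j)}$ is the minimal $j$-factor exceeding $u_a^{(i)}$; in particular no additional straightening terms arise. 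Iterating this for every non-$j$ factor and every $j$-block lying above it merges the $B_i$'s into the single block $\emptyset^{[j]}$ on the right and arranges the non-$j$ factors into $\check{v}_{\boldsymbol{\lambda}}$ on the left. The total accumulated exponent is
\[
-\sum\xi\bigl(u_a^{(i)},\,B_k\bigr)=-\xi\bigl(\check{v}_{\boldsymbol{\lambda}},\,\emptyset^{[j]}\bigr),
\]
by Definition \ref{def-xi}, since every pair $(u_a^{(i)},u_g^{(j)})$ with matching $c$-parts and $u_a^{(i)}<u_g^{(j)}$ (equivalently, with the $j$-factor lying above the non-$j$ factor in $v_{\boldsymbol{\lambda}}$) is counted exactly once over the iteration.

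The main obstacle is to check that the block structure is preserved throughout: after each swap, the surviving $j$-factors must still form maximal consecutive-subscript runs so that Lemma \ref{lem3}(ii) can be reapplied. This is precisely where the running assumption that $s_j$ be sufficiently small for $|\boldsymbol{\lambda};\boldsymbol{s}\rangle$ is used: it forces every non-$j$ factor of $v_{\boldsymbol{\lambda}}$ that lies below any $\emptyset^{[j]}$-factor to come from the empty tail of its own sector, so no partition-cell contribution from $\lambda^{(i)}$ can break the consecutivity of the $j$-blocks at any intermediate step. Once this invariant is secured, the swapping proceeds cleanly and the $q$-exponents add to $-\xi(\check{v}_{\boldsymbol{\lambda}},\emptyset^{[j]})$, as required.
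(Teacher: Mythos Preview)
Your approach is essentially the same as the paper's. The paper's proof is a one-line pointer (``follows from Lemma~\ref{lem3} and Lemma~\ref{lem33}; see also Example~\ref{ex3}''), and what you have written is precisely the unpacked version: iterate the inverse of Lemma~\ref{lem3}(ii) to slide each non-$j$ factor leftward past the adjacent $j$-block, accumulate the exponents, and observe that they sum to $-\xi(\check{v}_{\boldsymbol{\lambda}},\emptyset^{[j]})$.

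One point to tighten: your final paragraph attributes the preservation of the consecutive-block structure to the standing ``$s_j$ sufficiently small'' assumption, but in fact that hypothesis is not needed here. Because $\lambda^{(j)}=\emptyset$, the $j$-factors present in $v_{\boldsymbol{\lambda}}$ are exactly $u_{s_j}^{(j)},u_{s_j-1}^{(j)},\ldots$, a single consecutive run; the swaps never alter the $j$-factors themselves, so at every intermediate stage the $j$-factors to the left of the current non-$j$ factor still form a consecutive block. Moreover the hypothesis of Lemma~\ref{lem3}(ii) --- that the bottom of the block be the minimal $u_g^{(j)}$ above $u_a^{(i)}$ --- holds at each step directly from the ordered structure of $v_{\boldsymbol{\lambda}}$: if $u_a^{(i)}$ sits in $N_k$, then by construction its $k$-value lies strictly between those of the bottom of $B_k$ and the top of $B_{k+1}$, which are $u_{g_k}^{(j)}$ and $u_{g_k-1}^{(j)}$. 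So the iteration runs cleanly with only the assumption $\lambda^{(j)}=\emptyset$; the ``sufficiently small'' hypothesis enters elsewhere in \S6 (Lemmas~\ref{lem33}, \ref{lem31}) but not in this lemma.
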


\begin{proof}
The proof follows from Lemma \ref{lem3} and Lemma \ref{lem33}. (see also Example \ref{ex3}.)
\end{proof}

\begin{lemma}
Let $1 \le j \le \ell$, $m>0$ and $\lambda$ be a partition of length at most $m$. 
Let $1 \le d \le \ell$ and $k$ be a integer satisfying $\lambda_{1} + s_{j} \le k$.
If $j \not= d$, then $\lambda^{[j]} \wedge u_{k}^{(d)}$ is expanded as 

\begin{equation*}
\lambda^{[j]} \wedge u_{k}^{(d)} = 
q^{-\xi (u_{k}^{(d)} , \lambda^{[j]}) + \xi ( \lambda^{[j]} , u_{k}^{(d)} ) } \, 
u_{k}^{(d)} \wedge \lambda^{[j]} 
+ \sum_{ | \mu | > | \lambda | } b_{\mu}(q) u_{k- |\mu| + |\lambda|}^{(d)} \wedge \mu^{[j]} , 
\end{equation*}
where $\lambda^{[j]}$ is defined in Definition \ref{j-partition}.

Moreover, if $b_{\mu}(q) \not= 0$, then $\mu_{1} + s_{j} \le k$.
\label{lem14}
\end{lemma}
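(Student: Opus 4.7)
The plan is to move $u_k^{(d)}$ from the rightmost position of $\lambda^{[j]} \wedge u_k^{(d)}$ to the leftmost by iterated application of the straightening rule (Corollary~\ref{s-rules-cor}), after truncating $\lambda^{[j]}$ to a sufficiently long finite prefix $u_{a_1}^{(j)} \wedge \cdots \wedge u_{a_r}^{(j)}$ with $a_i = s_j + \lambda_i - (i-1)$. Since $d \neq j$ and $u_k^{(d)} > u_{a_i}^{(j)}$ for every $i$ (because $k \ge \lambda_1 + s_j \ge a_i$), each swap $u_{a_i}^{(j)} \wedge u_k^{(d)}$ via (\ref{eq.22}) yields a main term $q^{\alpha_i} u_k^{(d)} \wedge u_{a_i}^{(j)}$, with $\alpha_i = 1$ if $c_{a_i} = c_k$ and $0$ otherwise (the prefactor $(-q^{-1})^{\delta_{d_1=d_2}}$ equals $1$ since $d \neq j$), together with a finite sum of corrections of the form ``(new $d$-runner bead at a lower position) wedge (new $j$-runner bead at a higher position)''.

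The pure main branch contributes $q^{\sum_i \alpha_i}\, u_k^{(d)} \wedge \lambda^{[j]}$. By Definition~\ref{def-xi}, $\xi(u_k^{(d)}, \lambda^{[j]}) = 0$ (since $u_k^{(d)} > u_{a_i}^{(j)}$ for all $i$), and $\xi(\lambda^{[j]}, u_k^{(d)}) = \#\{i : c_{a_i} = c_k\} = \sum_i \alpha_i$; hence this main contribution is exactly $q^{-\xi(u_k^{(d)}, \lambda^{[j]}) + \xi(\lambda^{[j]}, u_k^{(d)})}\, u_k^{(d)} \wedge \lambda^{[j]}$, matching the first summand in the claim.

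For each correction, I would next move the newly created $d$-bead leftward by repeated straightening, and re-sort the resulting runner-$j$ factors into decreasing order (absorbing signs and $q$-powers into a scalar $b_\mu(q)$), using Corollary~\ref{cor.5} to kill any term with a repeated $j$-runner position. The result is a finite sum of terms $b_\mu(q)\, u_{k'}^{(d)} \wedge \mu^{[j]}$ for various partitions $\mu$. Conservation of the bead-sum under each swap (remark (c) after Corollary~\ref{s-rules-cor}) forces $k' + (r s_j + |\mu| - \binom{r}{2}) = k + (r s_j + |\lambda| - \binom{r}{2})$, so $k' = k - |\mu| + |\lambda|$. Remark (a) after Corollary~\ref{s-rules-cor} shows that every newly introduced $j$-bead lies strictly below $k$, while the original $a_i$'s are bounded by $\lambda_1 + s_j \le k$; therefore $\mu_1 + s_j \le k$. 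Finally, $|\mu| > |\lambda|$ because each correction strictly raises the bead-sum on runner $j$ (the parameter $\Delta = (c_k - c_{a_i}) + n\ell t$ is positive on the allowed range of $t$, combining the cases $\beta_i = 0$ with $c_k > c_{a_i}$ and $\beta_i = 1$ with $t \ge 1$).

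The main obstacle is the bookkeeping for iterated corrections: sliding the newly created $d$-bead through the remaining $j$-runner factors may itself spawn further corrections. I would control this by observing that on each truncated wedge the total number of swaps is bounded, so the iteration terminates after finitely many steps; after collecting like terms the claimed form emerges, and the uniform conservation argument above then pins down the exponent $k - |\mu| + |\lambda|$ and the bound $\mu_1 + s_j \le k$ regardless of the order in which the corrections are processed.
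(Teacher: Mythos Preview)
Your approach---repeatedly apply the straightening rule~(\ref{eq.22}) to slide $u_k^{(d)}$ leftward through the $j$-factors---is exactly what the paper does; its proof is the single sentence ``applying the identity~(\ref{eq.22}) repeatedly, we expand $\lambda^{[j]}\wedge u_k^{(d)}$ as a linear combination of $u_{k'}^{(d)}\wedge\mu^{[j]}$ such that $k'\le k$''. Your elaboration of the main branch and the conservation law $k'=k-|\mu|+|\lambda|$ is fine.

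One inaccuracy: from $k\ge a_i$ (level-$1$ labels) you cannot conclude $u_k^{(d)}>u_{a_i}^{(j)}$ in the order of Definition~\ref{notation}, since that order compares the \emph{original} positions $c+n(d-1)-n\ell m$. When $k=a_1=\lambda_1+s_j$ and $d<j$ one actually has $u_k^{(d)}<u_{a_1}^{(j)}$, so $\alpha_1=-1$ and $\xi(u_k^{(d)},\lambda^{[j]})=1$, not $0$. These two slips cancel: in general $\sum_i\alpha_i=-\xi(u_k^{(d)},\lambda^{[j]})+\xi(\lambda^{[j]},u_k^{(d)})$, so your main-term exponent is correct even though the justification is not.

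A more serious gap is your argument for $|\mu|>|\lambda|$. You assert that ``each correction strictly raises the bead-sum on runner~$j$'', but this holds only while the current $d$-bead position $K$ lies above the $j$-bead $a_i$ being swapped (i.e.\ $m_K<m_{a_i}$, so $\mathrm{sgn}(m)=+1$). After a correction has lowered $K$ below some remaining $a_{i-1}$, the next swap has $\mathrm{sgn}(m)=-1$ and its correction terms move the $j$-bead \emph{down} and the $d$-bead \emph{up}; for instance with $n=\ell=2$, $j=2$, $d=1$, $s_2=0$, $\lambda=(4)$, $k=4$ and truncation $r=2$, one branch produces $u_4^{(1)}\wedge u_2^{(2)}\wedge u_1^{(2)}$, i.e.\ $k'=k$ and $\mu=(2,2)$ with $|\mu|=|\lambda|$. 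Your monotonicity argument therefore does not go through as written; what survives from the iteration is only the weaker bound $k'\le k$ (equivalently $|\mu|\ge|\lambda|$), which is in fact all the paper's one-line proof asserts and all that is needed downstream in Corollary~\ref{cor30} and Lemma~\ref{lem31}.
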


\begin{proof}
Applying the identity (\ref{eq.22}) repeatedly, we expand $\lambda^{[j]} \wedge u_{k}^{(d)}$ as a linear combination of 
$u_{k'}^{(d)} \wedge \mu^{[j]}$ such that $k' \le k$.

\end{proof}

\begin{corollary}
Let $1 \le j \le \ell$, $m>0$, $t>0$, 

\begin{align*}
\emptyset^{[j]} &= u_{s_{j}}^{(j)} \wedge u_{s_{j}-1}^{(j)} \wedge \cdots \wedge u_{s_{j}-m}^{(j)} \,\,\,\,  \text{ and } \,\,\,\,
u = u_{g_{1}}^{(d_{1})} \wedge u_{g_{2}}^{(d_{2})} \wedge \cdots \wedge u_{g_{t}}^{(d_{t})} \,\,\,\,.
\end{align*}

If $d_{b} \not= j$ for all $b=1,2,\cdots ,t$ and $s_{j}-r \le g_{1} \le g_{2} \le \cdots \le g_{t}$, 
then $\emptyset^{[j]} \wedge u$ can be written in the form  

\begin{equation*}
\emptyset^{[j]} \wedge u = 
q^{-\xi (u , \emptyset^{[j]}) + \xi ( \emptyset^{[j]} , u ) } \, u \wedge \emptyset^{[j]} 
+ \sum_{\mu \not= \emptyset} v_{\mu}(q) \wedge \mu^{[j]} ,
\end{equation*}
where $v_{\mu}(q)$ is a linear combination of $q$-wedge products.
\label{cor30}
\end{corollary}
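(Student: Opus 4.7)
The plan is to induct on $t$, the length of $u$. The base case $t = 1$ is a direct application of Lemma~\ref{lem14}: setting $\lambda = \emptyset$ in that lemma (so $\lambda^{[j]} = \emptyset^{[j]}$, $|\lambda| = 0$, and $\lambda_1 = 0$), and taking $k = g_1$, $d = d_1$, the formula reads
\[
\emptyset^{[j]} \wedge u_{g_1}^{(d_1)} = q^{-\xi(u_{g_1}^{(d_1)}, \emptyset^{[j]}) + \xi(\emptyset^{[j]}, u_{g_1}^{(d_1)})}\, u_{g_1}^{(d_1)} \wedge \emptyset^{[j]} + \sum_{\mu \ne \emptyset} b_\mu(q)\, u_{g_1 - |\mu|}^{(d_1)} \wedge \mu^{[j]},
\]
which is of the desired form, with $v_\mu(q) = b_\mu(q)\, u_{g_1 - |\mu|}^{(d_1)}$.

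For the inductive step I would peel off the rightmost factor: write $u = u' \wedge u_{g_t}^{(d_t)}$ where $u' = u_{g_1}^{(d_1)} \wedge \cdots \wedge u_{g_{t-1}}^{(d_{t-1})}$. Applying the induction hypothesis to $\emptyset^{[j]} \wedge u'$ and wedging on the right with $u_{g_t}^{(d_t)}$ gives
\[
\emptyset^{[j]} \wedge u = q^{-\xi(u', \emptyset^{[j]}) + \xi(\emptyset^{[j]}, u')}\, u' \wedge \emptyset^{[j]} \wedge u_{g_t}^{(d_t)} + \sum_{\mu \ne \emptyset} v_\mu(q) \wedge \mu^{[j]} \wedge u_{g_t}^{(d_t)}.
\]
In each resulting inner product $\emptyset^{[j]} \wedge u_{g_t}^{(d_t)}$ and $\mu^{[j]} \wedge u_{g_t}^{(d_t)}$, invoke Lemma~\ref{lem14} (with $\lambda = \emptyset$ or $\lambda = \mu$, respectively) to push $u_{g_t}^{(d_t)}$ to the left of the appropriate $\lambda^{[j]}$. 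All resulting error terms have the shape $(\text{wedge}) \wedge \nu^{[j]}$ with $\nu \ne \emptyset$, and can be absorbed into the new sum $\sum_{\nu \ne \emptyset} v_\nu(q) \wedge \nu^{[j]}$. Finally, the coefficient of the main term $u \wedge \emptyset^{[j]}$ collapses to $q^{-\xi(u, \emptyset^{[j]}) + \xi(\emptyset^{[j]}, u)}$ by additivity of $\xi$ in its wedge-product slots, namely $\xi(u_1 \wedge u_2, v) = \xi(u_1, v) + \xi(u_2, v)$ and similarly for the second slot, which is immediate from Definition~\ref{def-xi}.

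The main obstacle I anticipate is verifying that the hypothesis $\mu_1 + s_j \le g_t$ of Lemma~\ref{lem14} holds for every intermediate partition $\mu$ produced during the recursion, so that the lemma may legitimately be reapplied at each step. This should follow by tracking the provenance of each such $\mu$: it is introduced by a prior application of Lemma~\ref{lem14} to a wedge involving some $u_{g_b}^{(d_b)}$ with $b < t$, which forces $\mu_1 + s_j \le g_b$, and then $g_b \le g_t$ by the monotonicity assumption $g_1 \le g_2 \le \cdots \le g_t$. Once this bookkeeping is carried out, the induction step goes through cleanly and the desired identity is obtained.
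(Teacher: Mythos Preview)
Your approach is correct and is precisely what the paper does: its proof reads in full ``Apply Lemma~\ref{lem14} repeatedly,'' and your induction on $t$ is the natural way to make that precise. Your identification and resolution of the one nontrivial bookkeeping point---that each intermediate $\mu$ produced by Lemma~\ref{lem14} satisfies $\mu_1 + s_j \le g_b \le g_t$ thanks to the ``Moreover'' clause and the monotonicity $g_1 \le \cdots \le g_t$---is exactly the content the paper leaves implicit.
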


\begin{proof}
Apply Lemma \ref{lem14} repeatedly.

\end{proof}

In the proof of Theorem \ref{thmB}, the next two lemmas (Lemma \ref{lem31} and Lemma \ref{lem45}) 
	play roles similar to Corollary \ref{cor4} and Lemma \ref{lem8} in the proof of Theorem \ref{thmA}.

\begin{lemma}

Let $\boldsymbol{\lambda} \in \Pi^{\ell}$. 
If $\lambda^{(j)} = \emptyset$, then 

\begin{equation*}
\pi ( \emptyset^{[j]} \wedge \overline{ \check{ v_{\boldsymbol{\lambda}} } }  ) = 
q^{-\xi (\check{v_{\boldsymbol{\lambda}}} , \emptyset^{[j]}) + \xi ( \emptyset^{[j]} , \check{v_{\boldsymbol{\lambda}}}) } \, 
\pi( \check{v_{\boldsymbol{\lambda}}} \wedge \emptyset^{[j]} ) . 
\end{equation*}
\label{lem31}
\end{lemma}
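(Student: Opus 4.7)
The plan is to imitate the proof of Lemma~\ref{lem8}, replacing Corollary~\ref{cor7} with Corollary~\ref{cor30} (the sufficiently-small analogue of pushing $\emptyset^{[j]}$ past a $q$-wedge) and Lemma~\ref{lem6} with Lemma~\ref{lem33} (the $\xi$-invariance under straightening when $s_j$ is sufficiently small). First, I would expand $\overline{\check{v_{\boldsymbol{\lambda}}}}$ in the standard basis of the level-$(\ell-1)$ Fock space as $\sum_{\check{\boldsymbol{\nu}}} a_{\check{\boldsymbol{\lambda}},\check{\boldsymbol{\nu}}}(q)\,\check{v_{\boldsymbol{\nu}}}$, where each $\check{\boldsymbol{\nu}}$ has the same total size $N=|\boldsymbol{\lambda}|$; viewed in level $\ell$ with an empty $j$-th component, the corresponding $\boldsymbol{\nu}$ all lie in $\check{\Pi}^{\ell}$, and it suffices to analyse $\pi(\emptyset^{[j]} \wedge \check{v_{\boldsymbol{\nu}}})$ term by term.

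Applying Corollary~\ref{cor30} to each summand produces
\[
\emptyset^{[j]} \wedge \check{v_{\boldsymbol{\nu}}}
\;=\; q^{-\xi(\check{v_{\boldsymbol{\nu}}},\,\emptyset^{[j]}) + \xi(\emptyset^{[j]},\,\check{v_{\boldsymbol{\nu}}})} \, \check{v_{\boldsymbol{\nu}}} \wedge \emptyset^{[j]}
\;+\; \sum_{\mu \ne \emptyset} v_{\mu}(q) \wedge \mu^{[j]}.
\]
The error terms with $\mu \ne \emptyset$ are killed by $\pi$: since elementary straightening preserves the $d$-index of each bead (Remark~(ii)(b) after Corollary~\ref{s-rules-cor}), any further straightening of $v_{\mu}(q) \wedge \mu^{[j]}$ into the level-$\ell$ standard basis produces only $|\boldsymbol{\rho};\boldsymbol{s}\rangle$ with $\rho^{(j)} = \mu \ne \emptyset$, all of which lie in $\ker\pi$. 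For the main-term exponent I would invoke the abacus argument of Lemma~\ref{lem33}: a single swap preserves both $\xi(\emptyset^{[j]},\cdot)$ and $\xi(\cdot,\emptyset^{[j]})$ under the sufficient-smallness hypothesis, and since the bar involution is a composition of such swaps, the exponent equals the single constant $-\xi(\check{v_{\boldsymbol{\lambda}}},\emptyset^{[j]}) + \xi(\emptyset^{[j]},\check{v_{\boldsymbol{\lambda}}})$ for every $\check{\boldsymbol{\nu}}$ occurring with nonzero coefficient in $\overline{\check{v_{\boldsymbol{\lambda}}}}$.

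Assembling these observations yields $\pi(\emptyset^{[j]} \wedge \overline{\check{v_{\boldsymbol{\lambda}}}}) = q^{-\xi(\check{v_{\boldsymbol{\lambda}}},\emptyset^{[j]}) + \xi(\emptyset^{[j]},\check{v_{\boldsymbol{\lambda}}})}\,\pi(\overline{\check{v_{\boldsymbol{\lambda}}}} \wedge \emptyset^{[j]})$, and the remaining task is to identify $\pi(\overline{\check{v_{\boldsymbol{\lambda}}}} \wedge \emptyset^{[j]})$ with $\pi(\check{v_{\boldsymbol{\lambda}}} \wedge \emptyset^{[j]})$. This is the step I expect to be the main obstacle, since the tail $(\overline{\check{v_{\boldsymbol{\lambda}}}} - \check{v_{\boldsymbol{\lambda}}}) \wedge \emptyset^{[j]}$ expands into standard basis vectors $v_{\boldsymbol{\nu}}$ with $\nu^{(j)} = \emptyset$ and $|\boldsymbol{\nu}| = N$, which are not automatically annihilated by $\pi$. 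The natural remedy is to use the imported bar involution on $\widetilde{\boldsymbol{F}_{q}}[\boldsymbol{s}]_{\le N}$ (compatible via $\overline{\pi(\cdot)} = \pi(\overline{\cdot})$) together with Lemma~\ref{lem44} to rewrite both sides as bar images of standard basis vectors, reducing the desired equality to a unitriangularity statement for the bar involution modulo $\ker\pi$.
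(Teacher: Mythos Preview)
Your approach diverges from the paper's in a way that creates two genuine gaps.

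First, the paper does \emph{not} expand $\overline{\check{v_{\boldsymbol{\lambda}}}}$ in the standard basis. It writes $\overline{\check{v_{\boldsymbol{\lambda}}}}$ directly as the scalar multiple of the reversed wedge $u_{g_t}^{(d_t)}\wedge\cdots\wedge u_{g_1}^{(d_1)}$ coming from the definition of the bar, and applies Corollary~\ref{cor30} once to this single wedge. That reversed wedge is exactly the increasing-order input Corollary~\ref{cor30} asks for. Your plan applies Corollary~\ref{cor30} to each ordered $\check{v_{\boldsymbol{\nu}}}$; these are decreasing, so the hypothesis fails, and the repeated use of Lemma~\ref{lem14} that underlies Corollary~\ref{cor30} breaks down on the error branches (the condition $\lambda_1+s_j\le k$ is not guaranteed for the next factor once $\lambda\neq\emptyset$). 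The detour through the standard basis is both unnecessary and the source of this problem.

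Second, your argument that the error terms $v_\mu(q)\wedge\mu^{[j]}$ die under $\pi$ is not correct as stated. Straightening preserves the multiset of $d$-indices, but it does \emph{not} fix the $(c-nm)$-values inside the $j$-sector (see the formula in Corollary~\ref{s-rules-cor}: the error terms swap the $c$-parts and shift the $m$-parts). Hence $\rho^{(j)}$ need not equal $\mu$. The paper proves $\rho^{(j)}\neq\emptyset$ differently: any $|\boldsymbol{\rho};\boldsymbol{s}\rangle$ in the straightening of $\check{u_{\boldsymbol{\nu}}}\wedge\mu^{[j]}$ satisfies $|\boldsymbol{\rho};\boldsymbol{s}\rangle\le|\boldsymbol{\nu}_\mu;\boldsymbol{s}\rangle$, and then Lemma~\ref{lem9} (using that $s_j$ is sufficiently small) forces $\rho^{(j)}\neq\emptyset$ whenever $\mu\neq\emptyset$.

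Finally, the ``remaining task'' in your last paragraph is a typo in the statement. The paper's proof actually establishes
\[
\pi(\emptyset^{[j]}\wedge\overline{\check{v_{\boldsymbol{\lambda}}}})
= q^{-\xi(\check{v_{\boldsymbol{\lambda}}},\emptyset^{[j]})+\xi(\emptyset^{[j]},\check{v_{\boldsymbol{\lambda}}})}\,
\pi(\overline{\check{v_{\boldsymbol{\lambda}}}}\wedge\emptyset^{[j]}),
\]
with the bar on the right, and this is precisely the form invoked in the proof of Lemma~\ref{lem45}. There is no need to identify $\pi(\overline{\check{v_{\boldsymbol{\lambda}}}}\wedge\emptyset^{[j]})$ with $\pi(\check{v_{\boldsymbol{\lambda}}}\wedge\emptyset^{[j]})$, and indeed that identity has no reason to hold.
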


\begin{proof}
Let $\xi = \xi ( \emptyset^{[j]} , \check{v_{\boldsymbol{\lambda}}})$, $\eta 
= \xi (\check{v_{\boldsymbol{\lambda}}} , \emptyset^{[j]})$ and 
$$\check{ v_{\boldsymbol{\lambda}} } = 
u_{g_{1}}^{(d_{1})} \wedge u_{g_{2}}^{(d_{2})} \wedge \cdots \wedge u_{g_{t}}^{(d_{t})} . $$
From the definition of the bar involution (\ref{barinvolution}), 

$$\overline{ \check{ v_{\boldsymbol{\lambda}} } } = 
(-q)^{\kappa ( \boldsymbol{d})} \, q^{- \kappa ( \boldsymbol{c})}
u_{g_{t}}^{(d_{t})} \wedge u_{g_{t-1}}^{(d_{t-1})} \wedge \cdots \wedge u_{g_{1}}^{(d_{1})} , $$
where $(-q)^{\kappa ( \boldsymbol{d})}$ and $q^{- \kappa ( \boldsymbol{c})}$ are suitable constants (see (\ref{barinvolution})).
Then, from Corollary \ref{cor30}, 

\begin{align*}
\emptyset^{[j]} \wedge \overline{ \check{ v_{\boldsymbol{\lambda}} } } 
& = (-q)^{\kappa ( \boldsymbol{d})} \, q^{- \kappa ( \boldsymbol{c})} \,
\emptyset^{[j]} \wedge u_{g_{t}}^{(d_{t})} \wedge u_{g_{t-1}}^{(d_{t-1})} \wedge \cdots \wedge u_{g_{1}}^{(d_{1})} \\
& = (-q)^{\kappa ( \boldsymbol{d})} \, q^{- \kappa ( \boldsymbol{c})} \, q^{\eta - \xi} \,
u_{g_{t}}^{(d_{t})} \wedge u_{g_{t-1}}^{(d_{t-1})} \wedge \cdots \wedge u_{g_{1}}^{(d_{1})} \wedge \emptyset^{[j]}  
+ \sum_{\mu \not= \emptyset} v_{\mu}(q) \wedge \mu^{(j)} \\
&= q^{\eta - \xi} \, \overline{ \check{ v_{\boldsymbol{\lambda}} } } \wedge \emptyset^{[j]}  
+ \sum_{\mu \not= \emptyset} v_{\mu}(q) \wedge \mu^{[j]} ,
\end{align*}
where $v_{\mu}(q)$ is a linear combination of $q$-wedge products.

Finally, we shall prove $\pi ( v_{\mu}(q) \wedge \mu^{[j]} ) = 0$ if $\mu \not= \emptyset$. 
To do it, it is enough to prove the next claim.

\begin{claim}
Let $\mu \not= \emptyset$ and $\boldsymbol{\nu} \in \Pi^{\ell}$ such that $\nu^{(j)} = \emptyset$. 
Then, 
$$ \pi ( \check{ u_{ \boldsymbol{\nu} } } \wedge \mu^{[j]} ) = 0. $$
\end{claim}

{(\it Proof of Claim )}

Define $\boldsymbol{\nu}_{\mu} \in \Pi^{\ell}$ as 
$\boldsymbol{\nu}_{\mu}^{(j)} = \mu$ and $\boldsymbol{\nu}_{\mu}^{(i)} = \boldsymbol{\nu}^{(i)}$ ($i \not= j$). 
From the straightening rule ((\ref{eq.21}) or (\ref{eq.22})), 
any $| \boldsymbol{\rho} ; \boldsymbol{s} \rangle$ appearing 
in the linear expansion of the straightening of $\check{ u_{ \boldsymbol{\nu} } } \wedge \mu^{[j]}$ 
is less than or equal to $| \boldsymbol{\nu}_{\mu} ; \boldsymbol{s} \rangle $. 
Thus, from Lemma \ref{lem9}, the $j$-th component is not empty.
\end{proof}

\begin{lemma}
Let $\boldsymbol{\lambda} \in \Pi^{\ell}$. 
If $\lambda^{(j)} = \emptyset$, then 

$$ \pi ( \overline{ v_{\boldsymbol{\lambda}}} )  =  
q^{-\xi (\check{v_{\boldsymbol{\lambda}}} , \emptyset^{[j]})} \, 
\pi ( \overline{\check{v_{\boldsymbol{\lambda}}}} \wedge \emptyset^{[j]} ) . $$
\label{lem45}
\end{lemma}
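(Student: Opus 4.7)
The plan is to run this in parallel with the proof of Lemma \ref{lem8} in the sufficiently-large case, substituting Lemma \ref{lem44} for Corollary \ref{cor7} and the commutation computation inside the proof of Lemma \ref{lem31} for Corollary \ref{cor4}, with the crucial extra feature that every step after applying the bar involution is to be read modulo $\ker \pi$. Set $\xi = \xi(\check{v_{\boldsymbol{\lambda}}} , \emptyset^{[j]})$ and $\eta = \xi(\emptyset^{[j]} , \check{v_{\boldsymbol{\lambda}}})$.

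First I would invoke Lemma \ref{lem44} to write $v_{\boldsymbol{\lambda}} = q^{-\xi} \, \check{v_{\boldsymbol{\lambda}}} \wedge \emptyset^{[j]}$, then apply the bar involution and use $\overline{q}=q^{-1}$ to obtain $\overline{v_{\boldsymbol{\lambda}}} = q^{\xi}\,\overline{\check{v_{\boldsymbol{\lambda}}} \wedge \emptyset^{[j]}}$. Next I would unpack $\overline{\check{v_{\boldsymbol{\lambda}}} \wedge \emptyset^{[j]}}$ from the definition (\ref{barinvolution}): the bar reverses the wedge and multiplies by a factor $(-q)^{\Delta\kappa_d} q^{-\Delta\kappa_c}$, where $\Delta\kappa_d$ and $\Delta\kappa_c$ count the cross-pairs between the two factors. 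Since $\check{v_{\boldsymbol{\lambda}}}$ has no $d$-index equal to $j$ we have $\Delta\kappa_d = 0$, while the cross-pairs with matching $c$-value are split precisely into the $\xi$-count and the $\eta$-count, so $\Delta\kappa_c = \xi + \eta$. Combined with the bar-invariance $\overline{\emptyset^{[j]}} = \emptyset^{[j]}$ (as used in the proof of Lemma \ref{lem8}), this yields
\begin{equation*}
\overline{\check{v_{\boldsymbol{\lambda}}} \wedge \emptyset^{[j]}} \;=\; q^{-\xi - \eta}\, \emptyset^{[j]} \wedge \overline{\check{v_{\boldsymbol{\lambda}}}},
\end{equation*}
and therefore $\overline{v_{\boldsymbol{\lambda}}} = q^{-\eta}\, \emptyset^{[j]} \wedge \overline{\check{v_{\boldsymbol{\lambda}}}}$.

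The final step is to commute $\emptyset^{[j]}$ past $\overline{\check{v_{\boldsymbol{\lambda}}}}$ modulo $\ker \pi$. This is exactly the intermediate identity produced inside the proof of Lemma \ref{lem31}: expanding $\overline{\check{v_{\boldsymbol{\lambda}}}}$ by the bar definition as $(-q)^{\kappa(\boldsymbol{d})}q^{-\kappa(\boldsymbol{c})}\, u_{g_t}^{(d_t)} \wedge \cdots \wedge u_{g_1}^{(d_1)}$ and applying Corollary \ref{cor30} factor by factor gives
\begin{equation*}
\emptyset^{[j]} \wedge \overline{\check{v_{\boldsymbol{\lambda}}}} \;=\; q^{\eta - \xi}\, \overline{\check{v_{\boldsymbol{\lambda}}}} \wedge \emptyset^{[j]} \;+\; \sum_{\mu \neq \emptyset} v_{\mu}(q) \wedge \mu^{[j]}.
\end{equation*}
The Claim proved inside Lemma \ref{lem31} shows that each $\pi(v_{\mu}(q) \wedge \mu^{[j]})$ vanishes, because every straightened term has non-empty $j$-th component and is thus killed by $\pi$. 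Applying $\pi$ to $\overline{v_{\boldsymbol{\lambda}}} = q^{-\eta}\, \emptyset^{[j]} \wedge \overline{\check{v_{\boldsymbol{\lambda}}}}$ and substituting the above congruence produces
\begin{equation*}
\pi(\overline{v_{\boldsymbol{\lambda}}}) \;=\; q^{-\eta}\, q^{\eta - \xi}\, \pi(\overline{\check{v_{\boldsymbol{\lambda}}}} \wedge \emptyset^{[j]}) \;=\; q^{-\xi}\, \pi(\overline{\check{v_{\boldsymbol{\lambda}}}} \wedge \emptyset^{[j]}),
\end{equation*}
as required.

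The main obstacle, and the only subtle bookkeeping, will be the second half of Step 2: verifying that the cross-$\kappa$ discrepancy between $\overline{A \wedge B}$ and $\overline{B} \wedge \overline{A}$ truly evaluates to $-(\xi + \eta)$ for this particular $A = \check{v_{\boldsymbol{\lambda}}}$, $B = \emptyset^{[j]}$. This requires matching the combinatorial count of same-$c$ pairs across the two blocks against Definition \ref{def-xi}, and also carefully observing that all factors of $\emptyset^{[j]}$ sit below $\check{v_{\boldsymbol{\lambda}}}$ in the abacus (which is where the hypothesis that $s_j$ is sufficiently small for $|\boldsymbol{\lambda};\boldsymbol{s}\rangle$ is used, entering through Lemma \ref{lem44}). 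Everything else is either a scalar computation or a direct appeal to Corollary \ref{cor30} and the Claim from Lemma \ref{lem31}.
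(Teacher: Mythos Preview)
Your proof is correct and follows essentially the same route as the paper's own argument: start from Lemma \ref{lem44}, apply the bar involution to pick up the factor $q^{-\xi-\eta}$ and swap $\check{v_{\boldsymbol{\lambda}}}$ with $\emptyset^{[j]}$, then invoke Lemma \ref{lem31} (via Corollary \ref{cor30} and the Claim therein) to commute $\emptyset^{[j]}$ back past $\overline{\check{v_{\boldsymbol{\lambda}}}}$ modulo $\ker\pi$. The only difference is cosmetic: the paper cites Lemma \ref{lem31} directly for the last step, whereas you unpack its proof; and you spell out the bookkeeping $\Delta\kappa_d=0$, $\Delta\kappa_c=\xi+\eta$ that the paper leaves implicit.
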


\begin{proof}
The proof of this proposition is similarly argued to the proof of Lemma \ref{lem8}.

Let $\xi = \xi ( \emptyset^{[j]} , u_{\boldsymbol{g}})$ and $\eta = \xi (u_{\boldsymbol{g}} , \emptyset^{[j]})$.
Then, 

\begin{align*}
\overline{ v_{\boldsymbol{\lambda}} } 
&= \overline{ q^{- \xi} \, \check{ v_{\boldsymbol{\lambda}} } \wedge \emptyset^{[j]}  } \\
&= q^{\xi} \, q^{- \xi - \eta} \, \overline{ \emptyset^{[j]} } \wedge \overline{ \check{ v_{\boldsymbol{\lambda}} } } \\
&= q^{-\eta} \, \emptyset^{[j]} \wedge \overline{ \check{ v_{\boldsymbol{\lambda}} } } .
\end{align*}
Thus, from Lemma \ref{lem31}, 

\begin{align*}
\pi ( \overline{ v_{\boldsymbol{\lambda}} } )
&= q^{-\eta} \, q^{\eta - \xi} \pi ( \overline{ \check{ v_{\boldsymbol{\lambda}} } } \wedge \emptyset^{[j]}) 
= q^{- \xi} \pi ( \overline{ \check{ v_{\boldsymbol{\lambda}} } } \wedge \emptyset^{[j]}) .
\end{align*}

\end{proof}

Now Theorem \ref{thmB} is an immediate consequence of the next proposition and Proposition \ref{prop10}.

\begin{proposition}
Let $\boldsymbol{\lambda} \in \check{\Pi}^{\ell}$.
Suppose that $s_{j}$ is sufficiently small for $|\boldsymbol{\lambda} ; \boldsymbol{s} \rangle$. 
Then, 

$$ \widetilde{G}^{+}(\boldsymbol{\lambda} ; \boldsymbol{s} ) 
= \sum_{\boldsymbol{\mu} \in \check{\Pi}^{\ell} } 
\Delta^{+}_{\check{\boldsymbol{\lambda}} , \check{\boldsymbol{\mu}} ; \check{\boldsymbol{s}}}(q) 
\, \pi ( | \, \boldsymbol{\mu} ; \boldsymbol{s} \rangle ) 
\quad , \quad
\widetilde{G}^{-}(\boldsymbol{\lambda} ; \boldsymbol{s} ) 
= \sum_{\boldsymbol{\mu} \in \check{\Pi}^{\ell} } 
\Delta^{-}_{\check{\boldsymbol{\lambda}} , \check{\boldsymbol{\mu}} ; \check{\boldsymbol{s}}}(q) 
\, \pi ( | \, \boldsymbol{\mu} ; \boldsymbol{s} \rangle ) \,\,  , $$
where $\check{\boldsymbol{\lambda}}$ (resp. $\check{\boldsymbol{\mu}} , \check{\boldsymbol{s}})$ 
is obtained by omitting the $j$-th component of $\boldsymbol{\lambda}$ 
(resp. $\boldsymbol{\mu} , \boldsymbol{s})$.

In particular, 

$$ \Delta^{+}_{\check{\boldsymbol{\lambda}} , \check{\boldsymbol{\mu}} ; \check{\boldsymbol{s}}}(q) 
= \widetilde{\Delta}^{+}_{\boldsymbol{\lambda} , \boldsymbol{\mu} ; \boldsymbol{s}}(q) 
\quad , \quad 
\Delta^{-}_{\check{\boldsymbol{\lambda}} , \check{\boldsymbol{\mu}} ; \check{\boldsymbol{s}}}(q) 
= \widetilde{\Delta}^{-}_{\boldsymbol{\lambda} , \boldsymbol{\mu} ; \boldsymbol{s}}(q) .$$ 

\end{proposition}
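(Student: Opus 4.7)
The plan is to mimic the proof of Proposition~\ref{prop11}, with Lemma~\ref{lem44} and Lemma~\ref{lem45} playing the roles of Corollary~\ref{cor7} and Lemma~\ref{lem8}, and Lemma~\ref{lem33} playing the role of Lemma~\ref{lem6}. Set
\[
F := \sum_{\boldsymbol{\mu}\in\check{\Pi}^{\ell}} \Delta^{-}_{\check{\boldsymbol{\lambda}},\check{\boldsymbol{\mu}};\check{\boldsymbol{s}}}(q)\,\pi(|\boldsymbol{\mu};\boldsymbol{s}\rangle).
\]
The uniqueness characterising $\widetilde{G}^{-}(\boldsymbol{\lambda};\boldsymbol{s})$ reduces matters to checking (i) $F \equiv |\boldsymbol{\lambda};\boldsymbol{s}\rangle$ mod $q^{-1}\widetilde{\mathcal{L}}^{-}$, and (ii) $\overline{F}=F$ in $\widetilde{\boldsymbol{F}_{q}}[\boldsymbol{s}]_{\le N}$. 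Once both are established, $\widetilde{\Delta}^{-}_{\boldsymbol{\lambda},\boldsymbol{\mu}}(q)=\Delta^{-}_{\check{\boldsymbol{\lambda}},\check{\boldsymbol{\mu}};\check{\boldsymbol{s}}}(q)$, and Theorem~\ref{thmB} then follows from Proposition~\ref{prop10}.

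Statement (i) is immediate from the unitriangularity of the level-$(\ell{-}1)$ matrix $(\Delta^{-}_{\check{\boldsymbol{\lambda}},\check{\boldsymbol{\mu}};\check{\boldsymbol{s}}}(q))$ combined with the bijection $\boldsymbol{\mu}\leftrightarrow\check{\boldsymbol{\mu}}$ on $\check{\Pi}^{\ell}$. For (ii) I would compute
\[
\overline{F} = \sum_{\boldsymbol{\mu}} \Delta^{-}_{\check{\boldsymbol{\lambda}},\check{\boldsymbol{\mu}};\check{\boldsymbol{s}}}(q^{-1})\,\pi(\overline{v_{\boldsymbol{\mu}}})
= \sum_{\boldsymbol{\mu}} \Delta^{-}_{\check{\boldsymbol{\lambda}},\check{\boldsymbol{\mu}};\check{\boldsymbol{s}}}(q^{-1})\, q^{-\xi(\check{v_{\boldsymbol{\mu}}},\emptyset^{[j]})}\,\pi\bigl(\overline{\check{v_{\boldsymbol{\mu}}}} \wedge \emptyset^{[j]}\bigr)
\]
by Lemma~\ref{lem45}. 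Assuming the constancy claim discussed below, the exponent $\xi_{0}:=\xi(\check{v_{\boldsymbol{\lambda}}},\emptyset^{[j]})$ factors out; bar-invariance of $G^{-}(\check{\boldsymbol{\lambda}};\check{\boldsymbol{s}})$ at level $\ell{-}1$ then yields
\[
\overline{F} = q^{-\xi_{0}}\,\pi\bigl(\overline{G^{-}(\check{\boldsymbol{\lambda}};\check{\boldsymbol{s}})} \wedge \emptyset^{[j]}\bigr) = q^{-\xi_{0}}\,\pi\bigl(G^{-}(\check{\boldsymbol{\lambda}};\check{\boldsymbol{s}}) \wedge \emptyset^{[j]}\bigr).
\]
Expanding $G^{-}(\check{\boldsymbol{\lambda}};\check{\boldsymbol{s}})=\sum \Delta^{-}_{\check{\boldsymbol{\lambda}},\check{\boldsymbol{\mu}};\check{\boldsymbol{s}}}(q)\,\check{v_{\boldsymbol{\mu}}}$ and applying Lemma~\ref{lem44} in reverse to each summand (again using constancy) to convert $q^{-\xi_{0}}\,\check{v_{\boldsymbol{\mu}}} \wedge \emptyset^{[j]}$ back into $v_{\boldsymbol{\mu}}$ collapses the expression to $F$.

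The main obstacle is the constancy claim: that $\xi(\check{v_{\boldsymbol{\mu}}}, \emptyset^{[j]})$ is the same for every $\boldsymbol{\mu}\in\check{\Pi}^{\ell}$ with $\Delta^{-}_{\check{\boldsymbol{\lambda}},\check{\boldsymbol{\mu}};\check{\boldsymbol{s}}}(q) \ne 0$. Lemma~\ref{lem33} provides exactly the parallel statement for $\xi(\emptyset^{[j]}, \check{v_{\boldsymbol{\mu}}})$. To bridge the two, one can observe that the straightening rule (\ref{eq.22}) swaps the $c$-parts of the two adjacent factors in pairs and therefore preserves the multiset of $c$-parts of all factors appearing; this in turn preserves the total number of bead-pairs on common runners between $\check{v_{\boldsymbol{\mu}}}$ and $\emptyset^{[j]}$, which is exactly $\xi(\emptyset^{[j]}, \check{v_{\boldsymbol{\mu}}}) + \xi(\check{v_{\boldsymbol{\mu}}}, \emptyset^{[j]})$. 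Combining invariance of the sum with Lemma~\ref{lem33} gives invariance of each summand separately. A secondary technical point to confirm along the way is that the error terms $\sum_{\mu\ne\emptyset} v_{\mu}(q)\wedge\mu^{[j]}$ produced by Lemma~\ref{lem14}/Corollary~\ref{cor30} when unwinding $\check{v_{\boldsymbol{\mu}}} \wedge \emptyset^{[j]}$ vanish after applying $\pi$; this follows from the dominance argument already used in the proof of Lemma~\ref{lem31} together with Lemma~\ref{lem9}.
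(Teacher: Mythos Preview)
Your proposal is correct and follows essentially the same route as the paper's proof: define the candidate $F$, verify the congruence and bar-invariance via Lemma~\ref{lem44}, Lemma~\ref{lem45}, and the bar-invariance of $G^{-}(\check{\boldsymbol{\lambda}};\check{\boldsymbol{s}})$ at level~$\ell-1$, exactly as in Proposition~\ref{prop11}. Your explicit argument for the constancy of $\xi(\check{v_{\boldsymbol{\mu}}},\emptyset^{[j]})$ (via preservation of the multiset of $c$-parts, so that the sum $\xi(\emptyset^{[j]},\check{v_{\boldsymbol{\mu}}})+\xi(\check{v_{\boldsymbol{\mu}}},\emptyset^{[j]})$ is invariant, then invoking Lemma~\ref{lem33}) is a welcome clarification of a step the paper cites without further comment.
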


\begin{proof}
The proof of this proposition is similarly to that of Proposition \ref{prop11}.

We only show the statement in the case of $G^{-}$.
The case of $G^{+}$ is treated similarly.

Take a sufficiently large $r$.
Put $F = \sum_{\boldsymbol{\mu} \in \check{\Pi}^{\ell} } 
\Delta_{\check{ \boldsymbol{\lambda} } , \check{ \boldsymbol{\mu} } ;  \check{ \boldsymbol{s} } }^{-} (q) \,
\pi( | \boldsymbol{\mu} ; \boldsymbol{s} \rangle ) $. 
We prove $\overline{F} = F$ and $F \equiv | \boldsymbol{\lambda} ; \boldsymbol{s} \rangle$ mod 
$q^{-1} \mathcal{L}^{-}$.

The second statement is clear since 
$\check{\boldsymbol{\lambda}} = \check{\boldsymbol{\mu}}$ if and only if $\boldsymbol{\lambda} = \boldsymbol{\mu} $.
We show $\overline{F} = F$. 
Let $\xi = \xi ( \check{v_{\boldsymbol{\lambda}} } , \emptyset^{[j]} )$. 

\begin{align*}
\overline{F} 
&= \sum_{\boldsymbol{\mu} \in \check{\Pi}^{\ell} } 
\Delta_{\check{ \boldsymbol{\lambda} } , \check{ \boldsymbol{\mu} } ;  \check{ \boldsymbol{s} } }^{-} (q^{-1}) \,
\overline{\pi( u_{ \boldsymbol{\mu} } ) } \\
&= \sum_{\boldsymbol{\mu} \in \check{\Pi}^{\ell} } 
\Delta_{\check{ \boldsymbol{\lambda} } , \check{ \boldsymbol{\mu} } ;  \check{ \boldsymbol{s} } }^{-} (q^{-1}) \,
\pi( \overline{ u_{ \boldsymbol{\mu} } } ) 
\hspace{3em} \text{ (By the definition of bar involution for $\widetilde{\boldsymbol{F}_{q}}[\boldsymbol{s}]_{\le N}$ ) } \\
&= \sum_{\boldsymbol{\mu} \in \check{\Pi}^{\ell} } 
\Delta_{\check{ \boldsymbol{\lambda} } , \check{ \boldsymbol{\mu} } ;  \check{ \boldsymbol{s} } }^{-} (q^{-1}) \,
q^{- \xi } \, \pi( \overline{ \check{ u_{ \boldsymbol{\mu} } }} \wedge \emptyset^{[j]} )
\hspace{3em} \text{ (By Lemma \ref{lem45} \& Lemma \ref{lem33}) }  \\
&= q^{- \xi} \bigg( \sum_{\boldsymbol{\mu} \in \check{\Pi}^{\ell} } 
\Delta_{\check{ \boldsymbol{\lambda} } , \check{ \boldsymbol{\mu} } ;  
\check{ \boldsymbol{s} } }^{-} (q^{-1}) \,
\,\pi( \overline{ \check{ u_{ \boldsymbol{\mu} } }} \wedge \emptyset^{[j]} ) \bigg) \\
&= q^{- \xi} \, 
\pi \left( \bigg( \sum_{\boldsymbol{\mu} \in \check{\Pi}^{\ell} } 
\Delta_{\check{ \boldsymbol{\lambda} } , \check{ \boldsymbol{\mu} } ;  \check{ \boldsymbol{s} } }^{-} (q^{-1}) \,
 \overline{ \check{ u_{ \boldsymbol{\mu} } }} \bigg) \wedge \emptyset^{[j]} \right) \\
&= q^{- \xi} \, \pi \left(
\bigg( \overline{ \sum_{\boldsymbol{\mu} \in \check{\Pi}^{\ell} } 
\Delta_{\check{ \boldsymbol{\lambda} } , \check{ \boldsymbol{\mu} } ;  \check{ \boldsymbol{s} } }^{-} (q) \,
\check{ u_{ \boldsymbol{\mu} } }} \bigg) \wedge \emptyset^{[j]} \right)
\end{align*}

Note that 
$G^{-}(\check{ \boldsymbol{\lambda} } ;  \check{ \boldsymbol{s} } ) = 
\sum_{\boldsymbol{\mu} \in \check{\Pi}^{\ell} } 
\Delta_{\check{ \boldsymbol{\lambda} } , \check{ \boldsymbol{\mu} } ;  \check{ \boldsymbol{s} } }^{-} (q) \,
\check{ u_{ \boldsymbol{\mu} } }$ 
and 
$\overline{ G^{-}(\check{ \boldsymbol{\lambda} } ;  \check{ \boldsymbol{s} } ) } = 
G^{-}(\check{ \boldsymbol{\lambda} } ;  \check{ \boldsymbol{s} } ) $.
Therefore, 

\begin{align*}
\overline{F} 
&= q^{- \xi} \, \pi \left(
\bigg( \sum_{\boldsymbol{\mu} \in \check{\Pi}^{\ell} } 
\Delta_{\check{ \boldsymbol{\lambda} } , \check{ \boldsymbol{\mu} } ;  \check{ \boldsymbol{s} } }^{-} (q) \,
\check{ u_{ \boldsymbol{\mu} } } \bigg) \wedge \emptyset^{[j]} \right) \\
&= \sum_{\boldsymbol{\mu} \in \check{\Pi}^{\ell} } 
\Delta_{\check{ \boldsymbol{\lambda} } , \check{ \boldsymbol{\mu} } ;  \check{ \boldsymbol{s} } }^{-} (q) \,
\pi( v_{\boldsymbol{\mu}} )
\hspace{3em} \text{ (By Corollary \ref{lem44} \& Lemma \ref{lem33}) }  \\
&= F.
\end{align*}

\end{proof}

%
%

\begin{bibdiv}
\begin{biblist}

\bib{A}{article}{
   author={Ariki, Susumu},
   title={Graded $q$-Schur algebras}
   journal={mathArXiv 0903.3453},
   volume={},
   date={},
   number={},
   pages={},
   issn={},
   review={},
}

\bib{CM}{article}{
   author={Chuang},
   author={Miyachi, Hyohe},
   title={Hidden Hecke Algebras and Duality},
   journal={in preparation},
   volume={},
   date={},
   number={},
   pages={},
   issn={},
   review={},
}

\bib{JMMO}{article}{
   author={Jimbo, Michio},
   author={Misra, Kailash C.},
   author={Miwa, Tetsuji},
   author={Okado, Masato},
   title={Combinatorics of representations of $U_q(\widehat{{\germ
   s}{\germ l}}(n))$ at $q=0$},
   journal={Comm. Math. Phys.},
   volume={136},
   date={1991},
   number={3},
   pages={543--566},
   issn={0010-3616},
   review={\MR{1099695 (93a:17015)}},
}

\bib{KT}{article}{
   author={Kashiwara, Masaki},
   author={Tanisaki, Toshiyuki},
   title={Parabolic Kazhdan-Lusztig polynomials and Schubert varieties},
   journal={J. Algebra},
   volume={249},
   date={2002},
   number={2},
   pages={306--325},
   issn={0021-8693},
   review={\MR{1901161 (2004a:14049)}},
   doi={10.1006/jabr.2000.8690},
}

\bib{R1}{article}{
   author={Rouquier, Rapha{\"e}l},
   title={Representations of rational Cherednik algebras},
   conference={
      title={Infinite-dimensional aspects of representation theory and
      applications},
   },
   book={
      series={Contemp. Math.},
      volume={392},
      publisher={Amer. Math. Soc.},
      place={Providence, RI},
   },
   date={2005},
   pages={103--131},
   review={\MR{2189874 (2007d:20006)}},
}

\bib{R2}{article}{
   author={Rouquier, Rapha{\"e}l},
   title={$q$-Schur algebras and complex reflection groups},
   journal={Mosc. Math. J.},
   volume={8},
   date={2008},
   number={1},
   pages={119--158, 184},
   issn={1609-3321},
   review={\MR{2422270 (2010b:20081)}},
}

\bib{S}{article}{
   author={Shan, Peng},
   title={Crystals of Fock spaces and cyclotomic rational double affine Hecke algebras},
   journal={ math.arXiv:0811.4549},
   date={},
}

\bib{SW}{article}{
   author={Shoji, Toshiaki},
   author={Wada, Kentaro},
   title={Product formulas for the cyclotomic $v$-Schur algebra and for the
   canonical bases of the Fock space},
   journal={J. Algebra},
   volume={321},
   date={2009},
   number={11},
   pages={3527--3549},
   issn={0021-8693},
   review={\MR{2510060 (2010m:20074)}},
   doi={10.1016/j.jalgebra.2008.03.011},
}

\bib{VV1}{article}{
   author={Varagnolo, Michela},
   author={Vasserot, Eric},
   title={On the decomposition matrices of the quantized Schur algebra},
   journal={Duke Math. J.},
   volume={100},
   date={1999},
   number={2},
   pages={267--297},
   issn={0012-7094},
   review={\MR{1722955 (2001c:17029)}},
   doi={10.1215/S0012-7094-99-10010-X},
}

\bib{VV2}{article}{
   author={Varagnolo, Michela},
   author={Vasserot, Eric},
   title={Cyclotomic double affine Hecke algebras and affine parabolic category $\mathcal{O}, I$},
   journal={ math.arXiv:0810.5000},
   date={2008},
}
		
\bib{U}{article}{
   author={Uglov, Denis},
   title={Canonical bases of higher-level $q$-deformed Fock spaces and
   Kazhdan-Lusztig polynomials},
   conference={
      title={Physical combinatorics},
      address={Kyoto},
      date={1999},
   },
   book={
      series={Progr. Math.},
      volume={191},
      publisher={Birkh\"auser Boston},
      place={Boston, MA},
   },
   date={2000},
   pages={249--299},
   review={\MR{1768086 (2001k:17030)}},
}

\bib{Y}{article}{
   author={Yvonne, Xavier},
   title={A conjecture for $q$-decomposition matrices of cyclotomic
   $v$-Schur algebras},
   journal={J. Algebra},
   volume={304},
   date={2006},
   number={1},
   pages={419--456},
   issn={0021-8693},
   review={\MR{2256400 (2008d:16051)}},
   doi={10.1016/j.jalgebra.2006.03.048},
}		

\end{biblist}
\end{bibdiv}

\end{document}